\tikzset{->-/.style={decoration={
  markings,
  mark=at position .45 with {\arrow{>}}},postaction={decorate}}}
\newtheorem{definition}{Definition}
\newtheorem{theorem}[definition]{Theorem}
\newtheorem{proposition}[definition]{Proposition}
\newtheorem{corollary}[definition]{Corollary}
\newtheorem{lemma}[definition]{Lemma}
\newtheorem{remark}[definition]{Remark}
\newtheorem{notation}[definition]{Notation}
\def\O{\mathcal{O}}
\def\Z{\mathbb{Z}}
\def\Pp{\mathbb{P}\,}
\def\R{\mathbb{R}}
\def\P{\mathbb{P}^1}
\def\R{\mathcal{R}}
\def\CH{\mathrm{CH}}
\def\Sym{\mathrm{Sym}}
\title{The integral Chow ring of $\R_2$}
\author{Alessio Cela and Aitor Iribar L\'opez}
\date{\vspace{-5ex}}
\begin{document}

\maketitle

\begin{abstract}
In this paper we compute the integral Chow ring of the moduli stack $\R_2$ of Prym pairs of genus 2.

\end{abstract}

MSC 2020: 14C15 (primary), 14H10

\tableofcontents

\section{Introduction}

Prym curves have been studied for more than 100 years due to their connections to the theory of curves, admissible covers and abelian varieties \cite{M,schottky, H. Farkas, Bea, Beabis, DonagiI, DonagiII}. They are connected, \'etale double covers of a smooth, projective curve or, equivalently, non-trivial line bundles on the base curve that are 2-torsion in the Picard group.

After fixing the genus $g$ of the base curve, the moduli stack of Prym curves has a coarse moduli space, which is denoted by $R_g$, and it is a finite cover of the moduli space of genus $g$ smooth curves $M_g$, of degree $2^{2g}-1$. Just as for $M_g$, it is natural to study the geometric properties of $R_g$.

The study of the Chow rings of $M_g$ started with Mumford \cite{Mumford}. With rational coefficients, they are known for genus up to $9$ \cite{Faber, Izadi, PenevVakil, SamHannah}, but the torsion elements in the Chow ring of the moduli stack $\mathcal{M}_g$ have drawn attention only recently. The full Chow group of $\mathcal{M}_2$ was computed in \cite{Vistoli}, relying on the fact that the curves in question are hyperelliptic. Following this direction, the Chow groups of the moduli stack of (pointed) hyperelliptic curves of any genus have been studied in \cite{EF, Pernice2, Lorenzo}.

The integral Chow groups of the Deligne-Mumford compactification $\overline{\mathcal{M}}_2$ were independently computed in \cite{Eric, VistoliLorenzo} and the ones of $\overline{\mathcal M}_3$ after inverting $6$ by \cite{Pernice}.

On the other hand, there are fewer results for the moduli stack of Prym curves, although the Picard groups with rational coefficients can be obtained from the results in \cite{Putman}. This paper aims to begin filling this gap by computing the integral Chow ring of the moduli stack of genus $2$ Prym pairs. It also initiates a series of works computing the integral Chow rings of hyperelliptic Prym pairs \cite{CelaLandiI, CelaLandiII}, with another paper forthcoming.

We should remark that, even though there is no ambiguity on what the stack $\mathcal M_g$ should parametrize, there are two natural choices of stacks that have $R_g$ as a coarse moduli space (see \S\ref{sec: background facts about Rg} below). We will denote them by $\widetilde{\R}_g$ and $\R_g$, and the second one is the rigidification (in the sense of \cite{ACV}) of the first one along the generic $\mathbb Z/ 2 \mathbb{Z}$ automorphism coming from the \'etale double cover. This distinction had already appeared in \cite{Cor2, BCF}.

\subsection{Results and methods}

In this paper, we compute the integral Chow ring of the stack $\R_2$ (defined over an algebraically closed field $k$ of characteristic different from $2$ and $3$), using the techniques of equivariant intersection theory developed in \cite{EG}. The first step is to find an explicit presentation of $\R_2$.

To state our results, we require the following definition.

\begin{definition}\label{notation: representations}
    Let
    $$
    G= (\mathbb{G}_m \times \mathbb{G}_m) \rtimes \Z/ 2 \Z
    $$
    where the action of $\Z/ 2\Z$ on $\mathbb{G}_m \times \mathbb{G}_m$ permutes the factors.

    We denote by $\Gamma$ the representation of $G$ arising from the sign representation of the
    $\Z/2\Z$ quotient of $G$. 
    
    Furthermore, we use $V$ to denote the standard representation of $G$ coming from the inclusion
    $
    G  \subseteq \mathrm{GL}_2
    $ 
    as the subgroup preserving the set of lines $\{ k (1,0), k(0,1)\} \subseteq k^2$. 
\end{definition}

\begin{theorem}\label{thm: presentation R2}
    We have an isomorphism of algebraic stacks
    $$
    \bigg[ \frac{\mathrm{Sym}^4 (V^\vee) \otimes \mathrm{det}(V) \otimes \Gamma \smallsetminus \Delta}{G}\bigg] \xrightarrow{\sim} \R_2.
    $$
    where 
    $\Delta$ is the locus of polynomials having either a root at $0$ or $\infty$, or having a double root.
\end{theorem}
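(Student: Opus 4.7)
The plan is to reduce $\R_2$ to a moduli problem about hyperelliptic curves with two marked Weierstrass points, then normalize those points to $\{0,\infty\}\subset\mathbb P^1$ via $\mathrm{PGL}_2$ and analyze the resulting group action.

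Every smooth genus-$2$ curve $C$ is hyperelliptic, and its $2^{2g}-1=15$ non-trivial $2$-torsion line bundles are in bijection with the $\binom{6}{2}=15$ unordered pairs of distinct Weierstrass points via $\{W_i,W_j\}\mapsto \mathcal O_C(W_i+W_j-g^1_2)$. After rigidifying the generic $\Z/2\Z$ acting as $-1$ on $\eta$, the stack $\R_2$ is equivalent to the moduli of pairs $(C,\{W_1,W_2\})$ with $\{W_1,W_2\}$ an unordered pair of distinct Weierstrass points; I would use this description throughout.

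Write $C$ as $\{y^2=F(u,v)\}\subset\mathbb P(1,1,3)$ with $F\in\mathrm{Sym}^6(V^\vee)$ of distinct roots (the Weierstrass points), and use the $\mathrm{PGL}_2$-action on $\mathbb P^1$ to move $\{W_1,W_2\}$ to $\{[0:1],[1:0]\}$. Then $F = uv\cdot p(u,v)$ with $p$ a degree-$4$ binary form not vanishing at $0,\infty$ and without double roots, i.e.\ $p\in\mathrm{Sym}^4(V^\vee)\setminus\Delta$. The subgroup of $\mathrm{GL}_2$ preserving the unordered pair $\{[0:1],[1:0]\}\subset\mathbb P^1$ is precisely the group $G$ of monomial matrices. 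The twist $\det(V)\otimes\Gamma$ is identified as follows: the canonical section $uv\in\mathrm{Sym}^2(V^\vee)$ cutting out the marked points has $G$-character $\det(V)^{-1}\otimes\Gamma$ --- diagonals $\mathrm{diag}(\alpha,\beta)$ rescale $uv$ by $(\alpha\beta)^{-1}$, the swap fixes $uv$ --- so multiplication by $uv$ is a $G$-equivariant injection $\mathrm{Sym}^4(V^\vee)\otimes\det(V)\otimes\Gamma\hookrightarrow\mathrm{Sym}^6(V^\vee)$ sending $p\mapsto uv\cdot p = F$.

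These data assemble into a morphism $\Phi$ from $[\mathrm{Sym}^4(V^\vee)\otimes\det(V)\otimes\Gamma\setminus\Delta/G]$ to $\R_2$ sending $p$ to the Prym pair $(\{y^2 = uv\cdot p\},\mathcal O(W_0+W_\infty-g^1_2))$. For the inverse, given $(C,\{W_1,W_2\})$, the set of isomorphisms $\mathbb P^1\xrightarrow{\sim}C/\iota$ sending $\{[0:1],[1:0]\}$ to $\{W_1,W_2\}$ together with a compatible choice of the coordinate $y$ is a $G$-torsor, along which $F$ pulls back to factor as $uv\cdot p$ for a $G$-equivariant map to $\mathrm{Sym}^4(V^\vee)\otimes\det(V)\otimes\Gamma\setminus\Delta$. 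I would confirm the map is an isomorphism by comparing generic stabilizers: $-I\in G$ acts on $\mathbb P(1,1,3)$ by $(u,v,y)\mapsto(-u,-v,y)\sim(u,v,-y)$, so the generic stabilizer $\{\pm I\}\subset G$ realizes the hyperelliptic involution, matching the generic automorphism of $\R_2$.

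The main obstacle is pinning down the twist $\det(V)\otimes\Gamma$: this requires careful tracking of how $y$, the coordinates $(u,v)$, and the form $F$ transform together under $G$, and verifying that the generic stabilizer on the quotient side corresponds precisely to the hyperelliptic involution after the rigidification defining $\R_2$.
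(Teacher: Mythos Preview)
Your approach is the same underlying idea as the paper's: identify $\R_2$ with genus-$2$ curves marked by an unordered pair of Weierstrass points (this is exactly Verra's lemma, which the paper cites), normalize the pair to $\{0,\infty\}$, and recover the twist from the $G$-character of $XY$. Your character computation for $XY$ is correct (the point being that $\det(V)$ already takes the value $-1$ on the swap, so one needs the extra $\Gamma$ to make the character trivial there).

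There is, however, a genuine gap in how you propose to conclude. Checking that the generic stabilizer $\{\pm I\}\subset G$ matches the hyperelliptic involution is a sanity check, not a proof that $\Phi$ is an isomorphism of stacks: two smooth Deligne--Mumford stacks of the same dimension with the same generic automorphism group need not be isomorphic, and a morphism between them inducing a bijection on isomorphism classes of geometric points and matching stabilizers can still fail to be an isomorphism in families. The paper handles this by constructing the morphism at the prestack level and then proving separately that it is fully faithful (any isomorphism of Prym pairs over $S$ comes from a unique $S$-point of $G$) and essentially surjective after passing to an \'etale cover; the stackification lemma then finishes. Your sketch of the inverse (``the set of normalizations together with a compatible choice of $y$ is a $G$-torsor'') is the right shape, but making it rigorous is exactly where the work lies, and is where the paper spends most of its effort.

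A second, smaller point: your $\mathbb P(1,1,3)$ model leaves the $G$-action on $y$ implicit, and this is precisely what determines which twist of $\mathrm{Sym}^6(V^\vee)$ the sextic $F$ lives in. The paper pins this down by identifying $G$ with $\underline{\mathrm{Aut}}_{\{0,\infty\}}(\mathbb P^1,\mathcal O(-3))$ via the isomorphism $\mathrm{GL}_2/\mu_3\cong\mathrm{GL}_2$, $[A]\mapsto\det(A)\,A$; without something equivalent, your multiplication-by-$uv$ argument tells you how $\mathrm{Sym}^4$ sits inside $\mathrm{Sym}^6$ as $G$-representations, but not which $G$-linearized $\mathrm{Sym}^6$ is the correct ambient space. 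You flag this yourself as the ``main obstacle,'' and indeed it is the step that requires the most care.
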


The proof will be given in \S\ref{sec: proof of presentation}. Here, we content ourselves to explain the main idea. The starting point is \cite[Lemma 4.3]{Verra}, which we recall here in the case $g=2$.

Let $C$ be a smooth genus $2$ curve defined over an algebraically closed field $k$ of characteristic different from $2$\footnote{ In \cite{Verra} the field $k$ is assumed to be the field of complex numbers, but the result we are interested in remains true (with the same proof) over every algebraically closed field of characteristic different from $2$.} and let $q : C \to \mathbb{P}^1$ be its associated double cover (well-defined up to an isomorphism of $\mathbb{P}^1$). Then the ramification divisor of $q$ is the set
$$
W = \{ w_1, \ldots , w_{6} \}
$$
of the Weierstrass points of $C$. Let $H=q^* \mathcal{O}_{\mathbb{P}^1}(1)$ be the hyperelliptic line bundle of $C$ and $E$ be the family of effective divisors of cardinality $2$, which are supported on $W$. For $e \in E$ the line bundle $H \otimes \mathcal{O}_C(-e) $ is a non-trivial square root of $\mathcal{O}_C$, i.e. an element of $\mathrm{Pic}^0(C)[2] \smallsetminus \{ \mathcal{O}_C \}$. The following lemma is proven in \cite{Verra}.

\begin{lemma}\cite[Lemma 2.3]{Verra}\label{lemma: idea}
    The map 
    \begin{align*}
    &E \to \mathrm{Pic}^0(C)[2] \smallsetminus \{ \mathcal{O}_C \} \\
    &e \mapsto H \otimes \mathcal{O}_C(-e)  
    \end{align*}
    is a bijection.
\end{lemma}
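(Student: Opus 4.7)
The plan is to use the standard hyperelliptic relation $2w_i \sim H$ for every Weierstrass point $w_i$, combined with a simple counting argument.

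First, I would verify that the map is well-defined, i.e.\ lands in $\mathrm{Pic}^0(C)[2]\smallsetminus\{\mathcal{O}_C\}$. For $e = w_i+w_j \in E$ (with $i<j$), the line bundle $L_e := H \otimes \mathcal{O}_C(-e)$ clearly has degree $0$. Since $w_i$ is a ramification point of the double cover $q$, the fiber of $q$ over $q(w_i)$ is the divisor $2w_i$, and this fiber belongs to the linear system $|H|$, so $2w_i \sim H$. Squaring $L_e$ therefore gives
$$L_e^{\otimes 2} \cong H^{\otimes 2} \otimes \mathcal{O}_C(-2w_i - 2w_j) \cong H^{\otimes 2} \otimes H^{-1} \otimes H^{-1} \cong \mathcal{O}_C,$$
so $L_e$ is $2$-torsion. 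Non-triviality of $L_e$ amounts to saying that $w_i+w_j \not\sim H$; otherwise $w_i+w_j$ would lie in the pencil $|H|$, whose members are exactly the fibers of $q$. But the only fiber containing $w_i$ is the ramified fiber $2w_i$, forcing $j=i$, a contradiction.

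Next I would compare cardinalities: $|E|=\binom{6}{2}=15$ and $|\mathrm{Pic}^0(C)[2]\smallsetminus\{\mathcal{O}_C\}| = 2^{2g}-1 = 15$, so bijectivity will follow from injectivity alone.

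Finally, for injectivity, suppose $L_e \cong L_{e'}$ with $e=w_i+w_j$ and $e'=w_k+w_l$. Then $\mathcal{O}_C(e) \cong \mathcal{O}_C(e')$, so $e$ and $e'$ are two distinct effective divisors in the same degree $2$ linear system, forcing $h^0(\mathcal{O}_C(e))\geq 2$. On a genus $2$ curve the unique $g^1_2$ is $|H|$, so $\mathcal{O}_C(e)\cong H$; but as in the previous paragraph this forces $e = 2w_i$, which is not an element of $E$ (since the elements of $E$ are reduced of cardinality $2$). Hence $e=e'$, completing the proof. The main (very small) obstacle is the uniqueness of the $g^1_2$ on a hyperelliptic genus $2$ curve, which is standard and can be argued directly from Riemann--Roch or Clifford's theorem.
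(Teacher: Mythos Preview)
Your argument is correct. The paper does not supply its own proof of this lemma: it simply cites \cite[Lemma~2.3]{Verra} and moves on. Your proof is the standard one, relying on the hyperelliptic relation $2w_i\sim H$, the count $\binom{6}{2}=15=2^{2g}-1$, and uniqueness of the $g^1_2$ on a genus~$2$ curve (here $H\cong\omega_C$, so this uniqueness is immediate). There is nothing to compare against in the paper itself, and your write-up would serve perfectly well as the omitted justification.
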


Thus, for $\eta \in \mathrm{Pic}^0(C)[2] \smallsetminus \{ \mathcal{O}_C \}$, we can uniquely write $\eta= H \otimes \mathcal{O}_C(-e)$ for $e \in E$ and, up to composing $q$ with an isomorphism of $\mathbb{P}^1$, assume that $q(e)= \{ 0, \infty \} \subseteq \mathbb{P}^1$. Therefore, we have an isomorphism
$$
\eta^{\otimes 2} = q^* ( \mathcal{O}_{\mathbb{P}^1}(2) \otimes \mathcal{O}_{\mathbb{P}^1}(- [0] - [\infty])) \xrightarrow{\sim} \mathcal{O}_C
$$
given by the polynomial $XY \in H^0(\mathbb{P}^1,\mathcal{O}_{\mathbb{P}^1}(2) \otimes \mathcal{O}_{\mathbb{P}^1}(- [0] - [\infty]) ) $.

This provides a map 
\begin{equation}\label{eqn: G-torsor presentation}
 \mathrm{Sym}^4 (V^\vee) \smallsetminus \Delta \to \R_2
\end{equation}
which Theorem \ref{thm: presentation R2} states is a $G$-torsor. We refer to \eqref{eqn: def C}, \eqref{eqn: def eta}, \eqref{eqn: def trivialization}, \eqref{eqn: def map on morphism part 1} and Lemma \ref{lemma: def of tau} for the actual definition of the map \eqref{eqn: G-torsor presentation}.

Next, in \S\ref{sec: computation of Chow}, we use the machinery of equivariant intersection theory and the presentation of $\R_2$ in Theorem \ref{thm: presentation R2} to compute the Chow ring of $\R_2$. Finally, in \S\ref{sec: tautological classes}, we identify the generators as naturally defined geometric classes on $\R_2$. The final result is the following.

\begin{theorem}\label{thm: Chow of R2}
    Over any algebraically closed base field of characteristic distinct from $2$ and $3$, the Chow ring of
    the moduli space of Prym pairs of genus 2 is given by
    $$
    \CH^*(\R_2)=\frac{\mathbb Z[\lambda_1, \lambda_2, \gamma]}{(2\lambda_1, 2\gamma, 8\lambda_2, \gamma^2+\lambda_1\gamma, \lambda_1^2+\lambda_1 \gamma)}
    $$
    where $\lambda_1$ and $\lambda_2$ denote respectively the first and second Chern classes of the Hodge bundle, and $\gamma$ denotes the first class of the pushforward of the structure sheaf of a natural degree $2$ cover of $\R_2$ (see Lemma \ref{lemma: interpretation gamma}).
\end{theorem}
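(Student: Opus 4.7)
The plan is to combine Theorem \ref{thm: presentation R2} with Edidin--Graham equivariant intersection theory. Writing $\R_2 = [U/G]$ with $U = W \smallsetminus \Delta$ for $W := \Sym^4(V^\vee) \otimes \det(V) \otimes \Gamma$, the excision sequence becomes
\[
\CH^{*-1}_G(\Delta) \xrightarrow{i_*} \CH^*(BG) \longrightarrow \CH^*(\R_2) \longrightarrow 0,
\]
since $\CH^*_G(W) = \CH^*(BG)$ as $W$ is a representation. The task thus reduces to: (i) computing $\CH^*(BG)$, (ii) determining the image of $i_*$, and (iii) matching the resulting generators with the Hodge bundle Chern classes.

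For step (i), using the semidirect structure $G = T \rtimes \Z/2\Z$ with $T = \mathbb{G}_m^2$, I would introduce the natural classes $c_1 := c_1(V)$, $c_2 := c_2(V)$, and $\gamma := c_1(\Gamma)$. The relation $2\gamma = 0$ is immediate from $\Gamma^{\otimes 2} \cong \mathcal{O}$. The key observation is that $V \cong V \otimes \Gamma$ as $G$-representations (intertwined by $\mathrm{diag}(1,-1)$), and equating second Chern classes of the two sides yields $\gamma^2 + c_1 \gamma = 0$. Together with a restriction/transfer analysis along the degree $2$ étale cover $BT \to BG$, one expects
\[
\CH^*(BG) = \Z[c_1, c_2, \gamma]/(2\gamma,\ \gamma^2 + c_1 \gamma).
\]

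For step (ii), I would decompose $\Delta = \Delta_{0\infty} \cup \Delta_{\mathrm{disc}}$. The divisor $\Delta_{0\infty} = \{a_0 a_4 = 0\}$ is cut out by a $G$-invariant polynomial whose associated line bundle has $T$-weight $(st)^2$, giving $[\Delta_{0\infty}] = 2 c_1$; while $\Delta_{\mathrm{disc}}$ is cut out by the binary quartic discriminant of $T$-weight $(st)^6$, giving $[\Delta_{\mathrm{disc}}] = 6 c_1$. Together these impose $2 c_1 = 0$ after passing to the quotient. To obtain the higher-codimension relations $\lambda_1^2 + \lambda_1 \gamma = 0$ and $8 \lambda_2 = 0$, I would compute $\CH^*_G$ of the components and push forward: $\Delta_{0\infty}$ is a union of two coordinate hyperplanes swapped by $\sigma$, so $\CH^*_G(\Delta_{0\infty})$ is accessible via the covering $[\Delta_0/T] \to [\Delta_{0\infty}/G]$; and $\Delta_{\mathrm{disc}}$ is birationally parametrized by the proper map $(L, Q) \mapsto L^2 Q$ from $(V^\vee \smallsetminus 0) \times \Sym^2(V^\vee) / \mathbb{G}_m$. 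Pushing forward natural classes from these parametrizations should produce the remaining ideal generators.

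For step (iii), I would identify $\H = \pi_* \omega_{\mathcal{C}/\R_2}$ as a $G$-equivariant bundle on the presentation: using $\omega_C = q^*\mathcal{O}_{\mathbb{P}^1}(1)$ and the projection formula, $\H$ corresponds (up to a possible character twist) to $V^\vee$, which allows writing $\lambda_1, \lambda_2$ in terms of $c_1, c_2, \gamma$; the class $\gamma$ is matched with the class from Lemma \ref{lemma: interpretation gamma}. I expect the main obstacle to lie in step (ii): determining the image of $i_*$ in codimension $\geq 2$ requires careful handling of the reducible intersection $\Delta_{0\infty} \cap \Delta_{\mathrm{disc}}$ and of $\Delta_0 \cap \Delta_\infty$, since products of divisor classes alone do not give these relations. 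In particular, obtaining the coefficient $8$ in $8\lambda_2 = 0$ will rely on this refined equivariant analysis.
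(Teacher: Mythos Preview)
Your plan matches the paper's strategy closely: both compute $\CH^*(\R_2)$ as a quotient of $\CH^*(BG)$ by the image of pushforward from the discriminant locus, decompose $\Delta$ into the ``root at $0$ or $\infty$'' piece and the ``double root'' piece, and resolve the latter via the map $(L,Q)\mapsto L^2Q$. Two differences are worth recording. First, the paper does not excise directly in the affine $W$; it passes to $\Pp\Sym^4(V^\vee)$ through the $\mathbb G_m$-torsor \eqref{eqn: projectivization}, which imposes the relation $h=\beta_1+\gamma$ and allows the multiplication-map calculus of \cite[\S4]{Eric} to be applied verbatim to proper maps between projective bundles. Your affine route is equally valid, but you must then check by hand that your parametrizing maps are proper. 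Second, the paper simply cites \cite[Theorem~5.2]{Eric} for $\CH^*(BG)$; your observation that $V\cong V\otimes\Gamma$ via $\mathrm{diag}(1,-1)$ gives $\gamma^2+c_1\gamma=0$ is a nice shortcut for one relation, though the transfer argument you allude to is still needed to rule out further ones.

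The place where your outline is genuinely thinner than what the proof requires is the determination of $\mathrm{im}(i_*)$ in codimension $\geq 2$. You correctly flag this as the crux, but a single birational parametrization of each component does not suffice: the paper constructs a full $G$-equivariant \emph{envelope} of $\underline\Delta$ (Proposition~\ref{prop: p is envelope}) stratified into four layers $\underline\Delta_1\supset\cdots\supset\underline\Delta_4$, then (a) exhibits $2\beta_1$, $\beta_1^2+\beta_1\gamma$, and $8\beta_2$ as specific pushforwards, and (b) verifies that \emph{every} pushforward from every stratum already lies in the ideal these generate. Step~(b)---showing $\mathrm{im}(p'_*)=0$ modulo the five relations---is the bulk of \S\ref{sec: computation of Chow} and cannot be omitted: without it you obtain only a surjection onto $\CH^*(\R_2)$, not the claimed presentation. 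Your proposal contains no mechanism for~(b).
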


\textbf{Assumptions on Characteristic}: for the remainder of the paper, we work over an algebraically closed field $k$ of characteristic distinct from 2 and 3.

\subsection*{Acknowledgments}

Both authors participated in the AGNES 2023 Summer School on "Intersection theory on moduli spaces" at Brown
University, from where we learned most of the tools applied in this work. We thank the organizers Dan Abramovich, Melody Chan, Eric Larson, and Isabel Vogt the NSF grant DMS-2312088 for funding the conference.

We also would like to thank Younghan Bae, Samir Canning, Andrea di Lorenzo, Andrew Kresch, Alberto Landi, Sam Molcho, Michele Pernice and Johannes Schmitt for several discussions about generalities on stacks and their Chow rings, stacks of Jacobians and rigidifications. We are grateful to Gavril Farkas for conversations about the ambiguity in what the stack of Prym pairs can refer to, and to Rahul Pandharipande for inviting us to speak about this project at the ETH Algebraic Geometry and Moduli Seminars. Section \ref{sec: background facts about Rg} was added in response to his questions. Finally, we thank the referee for carefully reading the paper and for suggesting various improvements. A.C. was supported by SNF-219369 and SNF-222363,  A.I.L. received support from SNF-219369.

\section{Presentation of the stack of genus 2 Prym pairs}

\subsection{Definitions}

We start with recalling the definition of $\R_g$. 

\begin{definition}
    A Prym curve of genus $g$ is the datum of $(C, \eta, \beta)$ where $C$ is a smooth geometrically connected genus $g$ curve, $\eta \in \mathrm{Pic} (C)$ is non-trivial and $\beta: \eta^{\otimes 2} \to \mathcal{O}_C$ is an isomorphism of invertible sheaves on $C$.
\end{definition}

\begin{definition}
    A family of Prym curves of genus $g$ is a smooth family of genus $g$ curves $f : C \to S$ with an invertible sheaf $\eta$ on $C$ and a isomorphism $\beta: \eta^{\otimes 2} \to \mathcal{O}_C$
    such that the restriction of these data to any geometric fiber of $f$ gives rise to a Prym curve.
\end{definition}

\begin{definition} \label{defn: Rg}
    The prestack $\R_g^{\mathrm{pre}}$ is defined over the Big étale site $\mathrm{Sch}_{\text{ét}}$
    as the category whose objects over a scheme $S$ are families $(C \to S, \eta, \beta)$ of genus $g$ Prym curves over $S$ and a morphism $(C \to S, \eta, \beta) \to (C' \to S', \eta', \beta')$ is the data of a cartesian diagram 
    \begin{equation}\label{eqn: morphism prestack}
    \begin{tikzcd}
    C\arrow{r}{\varphi} \rar\dar\drar[phantom, "\square"] & C' \arrow{d} \\%
    S \arrow{r}{f} & S'
    \end{tikzcd}
    \end{equation}
    such that there exists an isomorphism $\tau: \varphi^* \eta' \to \eta$ \footnote{Observe that we are adopting the convention that the datum of $\tau$ is not included in the
    definition of a morphism in $\mathcal R_g^{\mathrm{pre}}$. }such that the diagram 
    \begin{equation}\label{eqn: diagram of sheaves}
    \begin{tikzcd}
    \varphi^* \eta'^{\otimes 2}\arrow{r}{\tau^{\otimes 2}} \arrow{d}{\varphi^*(\beta'^{\otimes 2})} &  \eta^{\otimes 2} \arrow{d}{\beta^{\otimes 2}} \\%
    \varphi^* \mathcal{O}_{C'} \arrow{r} & \mathcal{O}_C
    \end{tikzcd}
    \end{equation}
    commutes.

    The moduli stack $\R_g$ is the stackification of $\R_g^{\mathrm{pre}}$.
\end{definition}

We will discuss some properties of $\R_g$ is \S\ref{sec: background facts about Rg}.

\begin{remark}\label{remark: stackification}
    Stackification is a two step process (see \cite[Section 2.5.6]{Alper} for the details): given a prestack $\mathcal{Y}$, one first defines another prestack $\mathcal{Y}^{\mathrm{st}_1}$ satisfying the condition that morphisms of $\mathcal{Y}^{\mathrm{st}_1}$ glue and a morphism $\mathcal{Y} \to \mathcal{Y}^{\mathrm{st}_1}$ such that 
    $$
    \mathrm{Mor}(\mathcal{Y}^{\mathrm{st}_1}, \mathcal{Z}) \to \mathrm{Mor}(\mathcal{Y}, \mathcal{Z})
    $$
    is an equivalence of categories for all prestacks $\mathcal{Z}$ in which morphisms glue. Then, one define the stackification $\mathcal{Y}^{\mathrm{st}}$ with a morphism $\mathcal{Y}^{\mathrm{st}_1} \to \mathcal{Y}^{\mathrm{st}}$ having the same universal property as above but where $\mathcal{Z}$ is required to be a stack.

    In our paper, we will use the notation $\R_g^{\mathrm{st}_1}$ to denote the resulting prestack after the first step. Unravelling the definitions, we see that, its objects are the same as the objects of $\R_g^{\mathrm{pre}}$, but a morphism $(C \to S, \eta, \beta) \to (C' \to S', \eta', \beta')$ is the data of a cartesian diagram as in \eqref{eqn: morphism prestack}
    such that there exists an étale cover $\widetilde{S} \to S$ and a isomorphism $\tau: h^*\varphi^* \eta' \to h^* \eta$ such that the diagram \eqref{eqn: diagram of sheaves} commutes after pullback under $\widetilde{S} \times_S C \xrightarrow{h} C$.
    
    Finally, we note that, after stackification, an object of $\R_g$ still includes the data of a genus $g$ curve, and, given objects $(C/S,\eta,\beta)$ and $(C'/S',\eta',\beta')$ of $\R_g$, one has an inclusion
    \begin{equation}\label{eqn: Rg to Mg is representable }
        \mathrm{Mor}_{\R_g}( (C/S,\eta,\beta) , (C'/S',\eta',\beta')) \subseteq \mathrm{Mor}_{\mathcal{M}_g}( C/S , C'/S').
    \end{equation}
\end{remark}

\subsubsection{Background facts about \texorpdfstring{$\mathcal{R}_g$}{Rg}}\label{sec: background facts about Rg}

As we pointed out in the introduction, there is another natural stack with the same coarse moduli space as $\mathcal R_g$. The aim of this section is to shortly clarify the differences and the relations between these two stacks. 

\begin{definition}\label{Definition: Rg tilde}
    Let $\widetilde{\mathcal{R}}_g$ be the stack over the big \'etale site whose objects are families of genus $g$ Prym curves, and a morphism $(C \to S, \eta, \beta) \to (C' \to S', \eta', \beta')$ is a Cartesian diagram of curves as in \eqref{eqn: morphism prestack}
    and an isomorphism $\tau: \varphi^* \eta' \to h^* \eta$ such that the diagram \eqref{eqn: diagram of sheaves}
    commutes.
\end{definition}

It is possible to show that $\widetilde{\mathcal R}_g$ is a smooth DM-stack. We will use this throughout.

\begin{remark}
    The stack $\widetilde{\R}_g$ has a $\mathbb{Z}/2\mathbb{Z}$-2-structure (see \cite[Appendix C]{AGV} for the defintion) given by the multiplication by $-1$ on the line bundle $\eta$, and the corresponding rigidification is exactly $\R_g$.
\end{remark}

Let $\widetilde{\mathcal{J}}_g$ be the stack of smooth curves of genus $g$ and line bundles of degree $0$ with morphisms given by maps between the curves and the line bundles. Then, $\widetilde{\mathcal{J}}_g$ has a $\mathbb G_m$-2-structure, whose rigidification we denote by $\mathcal{J}_g$. The resulting stack is projective over $\mathcal M_g$ by \cite[Theorem 4.3.]{Deligne}. 

Note that there is a natural morphism $\tilde{\iota}: \widetilde{\mathcal{R}}_g \to \widetilde{\mathcal{J}}_g$.

\begin{proposition}
    With notation as above, $\mathcal{R}_g$ is a smooth DM stack, with a representable, finite and \'etale map to $\mathcal M_g$ of degree $2^{2g}-1$. Moreover, there is a commutative diagram
    $$
    \begin{tikzcd}
    \widetilde{\mathcal{R}}_g\arrow[r, "\tilde{\iota}"]\arrow[d, "\pi"] &\widetilde{\mathcal{J}}_g\arrow[d]\\
        \mathcal{R}_g \arrow[r, "\iota"]&\mathcal{J}_g
    \end{tikzcd}
    $$
    where the vertical arrows are the rigidification morphisms, and the lower horizontal arrow is a closed immersion.
\end{proposition}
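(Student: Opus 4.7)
The plan is to deduce all assertions from two inputs: that $\widetilde{\mathcal{R}}_g$ is a smooth DM stack (given), and that the rigidification $\pi: \widetilde{\mathcal{R}}_g \to \mathcal{R}_g$ is by construction a $B(\mathbb{Z}/2\mathbb{Z})$-gerbe. Since $B(\mathbb{Z}/2\mathbb{Z})$ is a smooth DM stack in characteristic $\neq 2$, both properties transfer to $\mathcal{R}_g$. The morphism $\mathcal{R}_g \to \mathcal{M}_g$ is then constructed by observing that the forgetful functor $(C,\eta,\beta) \mapsto C$ from $\widetilde{\mathcal{R}}_g$ to $\mathcal{M}_g$ sends the generic $\pm 1$-automorphism on $\eta$ to the identity on $C$, and therefore factors through $\pi$ by the universal property of rigidification. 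Its representability is immediate from the inclusion \eqref{eqn: Rg to Mg is representable }.

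For the degree and étaleness, I would pull back along any scheme $S \to \mathcal{M}_g$ classifying a family $C \to S$ of smooth curves, with relative Jacobian $J = \mathrm{Pic}^0_{C/S}$, and prove the identification
$$
\mathcal{R}_g \times_{\mathcal{M}_g} S \;\cong\; J[2] \smallsetminus \{0_S\}.
$$
The key input is that once we rigidify by the $\mathbb{Z}/2\mathbb{Z}$ scaling on $\eta$, the trivialization $\beta$ contributes no further moduli: two trivializations of $\eta^{\otimes 2}$ differ by a unit $c \in \mathbb{G}_m$, and they yield isomorphic objects of $\mathcal{R}_g$ as soon as $\pm c$ is a square, which holds étale-locally. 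In characteristic $\neq 2$, the subscheme $J[2] \to S$ is finite étale of rank $2^{2g}$ with the identity as a connected component, so its complement $J[2] \smallsetminus \{0_S\}$ is finite étale of degree $2^{2g}-1$, as asserted. Finiteness of $\mathcal{R}_g \to \mathcal{M}_g$ will then also follow a posteriori from the closed immersion $\iota$ combined with the projectivity of $\mathcal{J}_g \to \mathcal{M}_g$ recalled just before the proposition.

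For the commutative square, the forgetful map $\tilde{\iota}: (C,\eta,\beta) \mapsto (C,\eta)$ intertwines the $\mathbb{Z}/2\mathbb{Z}$-2-structure on $\widetilde{\mathcal{R}}_g$ with the $\mathbb{G}_m$-2-structure on $\widetilde{\mathcal{J}}_g$ via $\{\pm 1\} \hookrightarrow \mathbb{G}_m$, so the functoriality of rigidification yields $\iota$ making the square commute. To show $\iota$ is a closed immersion, I would identify its scheme-theoretic image as the 2-torsion substack of $\mathcal{J}_g$ with the identity section removed: $\mathcal{J}_g[2]$ is closed because $[2]$ is proper, and the identity section is clopen inside $\mathcal{J}_g[2]$ because $[2]$ is étale in characteristic $\neq 2$. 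Combined with the fact that $\iota$ is a representable monomorphism, visible from the identification in the previous paragraph together with the fiber-by-fiber description of $\mathcal{J}_g$ as the relative Jacobian, this upgrades to a closed immersion. The main obstacle I anticipate is precisely the identification $\mathcal{R}_g \times_{\mathcal{M}_g} S \cong J[2] \smallsetminus \{0_S\}$, which requires a careful descent argument showing that $\beta$ contributes no moduli beyond $\eta$ after rigidification; once this is granted, all remaining assertions reduce to standard facts about 2-torsion in abelian schemes and the projectivity of $\mathcal{J}_g \to \mathcal{M}_g$.
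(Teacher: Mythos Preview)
Your proposal is correct and takes a genuinely different route from the paper. The paper argues as follows: representability of $\kappa:\mathcal{R}_g\to\mathcal{M}_g$ comes from \eqref{eqn: Rg to Mg is representable } as you say, but properness is established via the valuative criterion (separatedness of $\mathcal{J}_g\to\mathcal{M}_g$ handles uniqueness, and existence uses that a $2$-torsion line bundle on the generic fiber extends as $2$-torsion over a DVR). Then quasi-finiteness plus Zariski's Main Theorem yields finiteness, and \'etaleness is deduced by counting geometric fibers and invoking smoothness of source and target. Finally $\iota$ inherits properness from $\kappa$, is quasi-finite and injective on geometric points, hence finite by ZMT, and unramified because $\kappa$ is, so it is a closed immersion.

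Your approach bypasses the valuative criterion entirely by identifying $\mathcal{R}_g\times_{\mathcal{M}_g}S$ directly with $J[2]\smallsetminus\{0_S\}$, after which \'etaleness, finiteness, the degree, and the closed-immersion property of $\iota$ all become consequences of standard facts about torsion in abelian schemes. This is more conceptual and explains \emph{why} the degree is $2^{2g}-1$ rather than just verifying it on geometric points. The cost is exactly the descent argument you flag: one must check that, after rigidifying by $\pm1$, the datum of $\beta$ glues over an \'etale cover to give a well-defined inverse map $J[2]\smallsetminus\{0_S\}\to\mathcal{R}_g\times_{\mathcal{M}_g}S$. This is true (the cocycle obstruction lives in units modulo squares and vanishes \'etale-locally, and morphisms in $\mathcal{R}_g$ form a sheaf of sets by \eqref{eqn: Rg to Mg is representable }), but it is precisely the kind of verification the paper's valuative-criterion argument is designed to avoid. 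Either approach is complete once that step is filled in; yours has the advantage of making the identification $\mathcal{R}_g\cong\mathcal{J}_g[2]\smallsetminus\{0\}$ explicit rather than leaving it implicit.
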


The previous proposition might be well-known to experts. However, we included a sketch of the proof due to our lack of knowledge of a precise reference.

\begin{proof}[Sketch of proof] 
    Since $\widetilde{\mathcal R}_g$ is a smooth DM-stack, so is $\R_g$, by \cite[Theorem 5.1.5.]{ACV}. Moreover, the morphism $\iota: \mathcal{R}_g \to \mathcal J_g$ is obtained from $\tilde{\iota}$ and the universal property of rigidification \cite[Theorem 5.1.5.]{ACV}.
    
    Next we show that $\iota$ is a closed embedding.
    Equation \eqref{eqn: Rg to Mg is representable } shows that $\mathcal R_g \to \mathcal M_g$ is representable (by algebraic spaces), so $\iota$ is also representable.
    
    Now, we show that it is proper.
    Recall that, from Remark \ref{remark: stackification}, an $S$-object of $\mathcal R_g$ is a curve $C \to S$ of genus $g$, and the data $(\eta, \beta)$ of a line bundle $\eta$ and trivialization of $\eta^{\otimes 2}$ defined only after an \'etale base change $\widetilde{S} \to S$. Let $S = \operatorname{Spec} (R)$ be the spectrum of a DVR, and consider a smooth curve $C \to S$ of genus $g$ with the data of two Prym curves $(\eta_i, \beta_i)$ for $i=1,2$, defined only up to an \'etale cover $\widetilde{S} \to S$, that agree on the generic fiber. To check the uniqueness part of the valuative criterion, we have to show that the identity $C \to C$ is a morphism between $(C,\eta_1, \beta_1)$ and $(C, \eta_2, \beta_2)$. Because $\mathcal J_{g}\to \mathcal M_g$ is separated, up to a further \'etale cover, there is an isomorphism $\tau:\eta_1 \to \eta_2$. Finally, perhaps after a further \'etale cover, we can also arrange $\tau$ in a way that makes the diagram \eqref{eqn: morphism prestack} commute (see the proof of Proposition \ref{prop: essential surjectivity} below for more details on how to arrange $\tau$ in this way).
    
    To check the existence part of the valuative criterion, let $C\to S= \mathrm{Spec}(R)$ be as before, and let $(C,\eta,\beta)$ be a $\operatorname{Frac}(R)$-point of $\mathcal R_g$. Since $\mathcal J_g$ is proper, there is an extension of DVR's $R \to R_1$ such that $\eta$ is in fact a line bundle on the generic point of $C_{R_1}$ which can be extended to the whole $C_{R_1}$. Since the Picard group of a curve over a DVR and over its generic point are isomorphic, the extension is necessarily $2$-torsion too, and this shows the existence of an extension of $\beta$.
    
    It is clear that $\kappa: \mathcal R_g \to \mathcal M_g$ is quasi-finite. We conclude that it is representable by schemes by \cite[03XX]{Stackproj}, and finite by the Zariski Main Theorem.
    
    Moreover, the geometric fibers of $\kappa$ have constant cardinality equal to $2^{2g}-1$ and, since the two stacks are smooth, this is also the degree of $\kappa$, and this implies that all fibers are reduced. Therefore $\kappa$ is finite \'etale. 
    
    We also know that $\iota$ is representable by algebraic spaces, quasi-finite, and it is proper because $\kappa$ is, so it is representable by schemes and finite by the Zariski Main Theorem. In fact, it is injective on geometric points. Since $\kappa$ is unramified, so is $\iota$, and so $\iota$ is a closed embeding by \cite[04DG]{Stackproj}.
\end{proof} 

\begin{remark}
    The stack $\widetilde{\mathcal R}_g$ is also proper over $\mathcal M_g$ (as it has the same coarse moduli space as $\mathcal R_g$), but it is not representable over $\mathcal M_g$ and does not embed into $\widetilde{\mathcal{J}}_g$. In fact, $\widetilde{\mathcal R}_g$ is DM but $\widetilde{\mathcal J}_g$ has 1-dimensional automorphism groups.  
\end{remark}

From this perspective, the stack $\R_g$ is better behaved than $\widetilde{\mathcal R}_g$. Both variants already appeared in the literature.  The paper \cite{Cor2} discusses $\widetilde{\mathcal{R}}_g$, while \cite{Cor, BCF} focus on $\mathcal{R}_g$.

\begin{remark}
    There is nothing special about $2$-torsion in this argument. For any $n$ that is invertible over the base field, if one studies curves with a line bundle of order $n$, there are two stacks $\mathcal M_g(n)$ and $\widetilde{\mathcal M}_g(n)$. The first one is the $\mathbb Z / n \mathbb{Z}$-rigidification of the latter, and embeds into $\mathcal J_g$.
\end{remark}

\begin{remark}
    As A. Landi pointed out to us, there is a canonical map $\mathcal{H}^w_{g,2} \to \R_g$ from the moduli stack of hyperelliptic curves with $2$-Weierstrass sections (see \cite{EdidinII}), which is $\mathbb{Z}/2\mathbb{Z}$-torsor. This provides a further presentation of $\R_g$.
\end{remark}

\subsection{Preliminaries on \texorpdfstring{$G$}{G}}

We start with some notation. Recall that $G = (\mathbb{G}_m \times \mathbb{G}_m) \rtimes \Z/ 2 \Z$.

\begin{notation}\label{Notation}
    We will often think of $G$ inside $\mathrm{GL}_2$ embedded as the subgroup of matrices of the form 
    $$
    (a,b;0):=
    \begin{pmatrix}
    a & 0 \\
    0 & b 
    \end{pmatrix}
    \ \text{or} \ 
    (a,b;1):=
    \begin{pmatrix}
    0 & a \\
    b & 0 
    \end{pmatrix}
    $$
    for $a,b \in k^*$. 
\end{notation}

The next proposition explains how the group $G$ arises in our computation. 

\begin{proposition}\label{prop: identification of G}
    The group scheme $G$ is isomorphic to the group scheme 
    $$
    \underline{\mathrm{Aut}}_{\{0,\infty\}}(\P,\O_{\P}(-3))
    $$
    of automorphisms of $(\P,\O_{\P}(-3))$ that preserve the set $\{0,\infty\} \subseteq \P$.
\end{proposition}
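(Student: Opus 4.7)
The plan is to exhibit an explicit isomorphism $\Phi: G \xrightarrow{\sim} \underline{\mathrm{Aut}}_{\{0,\infty\}}(\P, \O_{\P}(-3))$ by starting from the natural $\mathrm{GL}_2$-action on $(\P, \O_{\P}(-3))$ and correcting it by a character twist. Using the embedding $G \subseteq \mathrm{GL}_2$ from Notation \ref{Notation}, the linear action of $G$ on $\mathbb{A}^2$ descends to $\P$ (preserving $\{0,\infty\}$ setwise, since $G$ preserves $\{(1,0),(0,1)\}$) and lifts to the tautological subbundle $\O_{\P}(-1) \subseteq \P \times \mathbb{A}^2$, hence to $\O_{\P}(-3) = \O_{\P}(-1)^{\otimes 3}$. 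This yields a homomorphism $\Phi_0: G \to \underline{\mathrm{Aut}}_{\{0,\infty\}}(\P, \O_{\P}(-3))$, but $\Phi_0$ has nontrivial kernel $\mu_3$: the scalar matrices $\zeta I$ with $\zeta^3 = 1$ act trivially on $\P$ and by $\zeta^3 = 1$ on $\O_{\P}(-3)$-fibers.

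To remove this kernel I twist by the character $\chi: G \to \mathbb{G}_m$ defined by $\chi(a,b;\epsilon) := (ab)^{-1}$; a short direct computation on the two components of $G$ shows it is multiplicative. I then define $\Phi(g)$ to act on $\P$ as $\Phi_0(g)$ does, and on $\O_{\P}(-3)$ as $\chi(g)$ times $\Phi_0(g)$. Since scalar multiplication on a line bundle commutes with every line-bundle automorphism, $\Phi$ is a group homomorphism. For injectivity, triviality of $\Phi(g)$ on $\P$ forces $g$ to be a scalar $\lambda I = (\lambda,\lambda;0)$, on which the combined action on $\O_{\P}(-3)$-fibers is $\lambda^3 \cdot \lambda^{-2} = \lambda$, which is trivial only when $\lambda = 1$. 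For surjectivity, both $G$ and $\underline{\mathrm{Aut}}_{\{0,\infty\}}(\P, \O_{\P}(-3))$ are smooth algebraic groups of dimension $2$ with two connected components (the latter sits in the extension $1 \to \mathbb{G}_m \to \underline{\mathrm{Aut}}_{\{0,\infty\}}(\P, \O_{\P}(-3)) \to \mathbb{G}_m \rtimes \Z/2\Z \to 1$ coming from the projection to $\mathrm{Aut}_{\{0,\infty\}}(\P)$), and $\Phi$ sends diagonal matrices to automorphisms fixing $0,\infty$ pointwise and antidiagonal matrices to automorphisms swapping them, so it hits both components. Together with injectivity this gives the isomorphism.

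The main subtlety is identifying the correct character twist: the untwisted map is not itself an isomorphism. Conceptually, what is happening is that $\underline{\mathrm{Aut}}_{\{0,\infty\}}(\P, \O_{\P}(-3)) \cong G/\mu_3$, and the proof hinges on the fact that $G/\mu_3 \cong G$ as $\Z/2\Z$-equivariant group schemes; this in turn boils down to observing that the sublattice $\{(m,n) \in \Z^2 : m+n \equiv 0 \pmod 3\}$ of $X^*(\mathbb{G}_m \times \mathbb{G}_m)$ admits a $\Z/2\Z$-equivariant basis (for example $\{(2,1),(1,2)\}$, which is exchanged by the swap).
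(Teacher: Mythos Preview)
Your proof is correct and follows essentially the same strategy as the paper: both recognize that the natural map $G \to \underline{\mathrm{Aut}}_{\{0,\infty\}}(\P,\O_{\P}(-3))$ realizes the target as $G/\mu_3$, and both produce an isomorphism $G \cong G/\mu_3$ by a character twist. The only organizational differences are that the paper first establishes $\underline{\mathrm{Aut}}(\P,\O_{\P}(-3)) \cong \mathrm{GL}_2/\mu_3$ and writes down the global isomorphism $\mathrm{GL}_2/\mu_3 \xrightarrow{\sim} \mathrm{GL}_2$, $[A]\mapsto \det(A)\,A$, before restricting to $G$, whereas you work directly on $G$; and the paper's twist is effectively by $\det^{-1}$ while yours is by $\chi(a,b;\epsilon)=(ab)^{-1}$, which differs from $\det^{-1}$ by the sign character $\Gamma$. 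Both twists kill the $\mu_3$ kernel (they agree on scalars), so both yield valid isomorphisms---though note that the specific isomorphism the paper writes down is the one used later in the construction of the morphism $\alpha^{\mathrm{pre}}$, so if you carry your version forward you would need to adjust those formulas by a sign on the non-identity component.
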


\begin{proof}
    From the exact sequence of group schemes
    $$
    1 \to \mu_3 \to \underline{\mathrm{Aut}}(\P,\O_{\P}(-1)) \to \underline{\mathrm{Aut}}(\P,\O_{\P}(-3)) \to 1
    $$
    and identifying $\underline{\mathrm{Aut}}(\P,\O_{\P}(-1)) = \mathrm{GL}_2$, we obtain an isomorphism
    \begin{equation}\label{iso of group schemes 1}
    \underline{\mathrm{Aut}}(\P,\O_{\P}(-3)) \xrightarrow{\sim} \mathrm{GL}_2/\mu_3.
    \end{equation}
    where $\mu_3 \subseteq \mathrm{GL}_2$ is the group of diagonal matrices of the form $\mathrm{diag}(\zeta,\zeta)$ where $\zeta \in \mathbb{G}_m$ is a $3$-rd root of unity. Moreover, we have an isomorphism of group schemes
    \begin{align}\label{iso of group schemes 2}
    \mathrm{GL}_2/\mu_3 & \to  \mathrm{GL}_2 \\
    [A] & \mapsto \mathrm{det}(A) A
    \end{align}
    with inverse given by 
    $$
    B \mapsto \bigg[ \frac{B}{\mathrm{det}(B)^{\frac{1}{3}}} \bigg].
    $$
    Note that cubic roots are well-defined mod $\mu_3$. By a direct computation,
    $$
    \underline{\mathrm{Aut}}_{\{0,\infty\}}(\P,\O_{\P}(-1)) \subset \underline{\mathrm{Aut}}(\P,\O_{\P}(-1))= \mathrm{GL}_2
    $$
    is identified with $G \subseteq \mathrm{GL}_2$ and the restriction of the isomorphism \eqref{iso of group schemes 1}, yields
    $$
    \underline{\mathrm{Aut}}_{\{0,\infty\}}(\P,\O_{\P}(-3)) \xrightarrow{\sim} G / \mu_3
    $$
    while then the restriction of \eqref{iso of group schemes 2}
    $$
    G / \mu_3 \xrightarrow{\sim} G.
    $$
    The composition of the two morphisms is the desired isomorphism.
\end{proof}

We will need to know the Chow ring of $BG$, which was computed by E. Larson \cite{Eric}.

\begin{notation}\label{notation: chern classes}
    Following \cite{Eric}, we set 
    \begin{align*}
        &\beta_i= c_i(V_G) \in \CH^*(BG), \\
        & \gamma= c_1(\Gamma_G) \in \CH^*(BG),
    \end{align*}
    where $V_G$, $\Gamma_G$ are the vector bundles over $BG$ associated to the representations $V$ and $\Gamma$ defined in Definition \ref{notation: representations}.
\end{notation}

\begin{theorem}\cite[Theorem 5.2.]{Eric}\label{thm: chow BG}
    The Chow ring of $BG$ is given by
    $$
    \CH^*(BG) \cong \frac{\Z[\beta_1, \beta_2, \gamma]}{(2\gamma, \gamma(\gamma +\beta_1))},
    $$
\end{theorem}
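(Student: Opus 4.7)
The plan is to establish the two nontrivial relations from natural isomorphisms of $G$-representations and then to check that the resulting presentation is complete via a pullback to the maximal torus together with an explicit equivariant approximation.

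The first relation is immediate: $\Gamma$ is a sign representation, so $\Gamma^{\otimes 2}$ is trivial and $2\gamma=0$. For the second, I would prove that $V\otimes\Gamma\cong V$ as $G$-representations. Writing $T=\mathbb G_m\times\mathbb G_m\subset G$ and $\chi$ for the first-factor character of $T$, one has $V\cong\mathrm{Ind}_T^G\chi$; since $\Gamma|_T$ is trivial, the projection formula for induction gives
$$V\otimes\Gamma\;\cong\;\mathrm{Ind}_T^G(\chi\otimes\Gamma|_T)\;\cong\;V.$$
Equating $c_1$ of the two sides recovers $2\gamma=0$, and equating $c_2$ yields $\beta_2+\beta_1\gamma+\gamma^2=\beta_2$, i.e.\ $\gamma(\gamma+\beta_1)=0$.

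To see that $\beta_1,\beta_2,\gamma$ generate $\CH^*(BG)$, I would use the finite \'etale double cover $\pi\colon BT\to BG$ induced by $T\hookrightarrow G$, whose Galois involution permutes the two factors of $T$. Pullback satisfies $\pi^*\beta_1=t_1+t_2$, $\pi^*\beta_2=t_1t_2$ and $\pi^*\gamma=0$, so its image lies in the invariant ring $\Z[t_1,t_2]^{\Z/2\Z}=\Z[\beta_1,\beta_2]$. The goal becomes to show that $\pi^*$ induces an isomorphism $\CH^*(BG)/(\gamma)\xrightarrow{\sim}\Z[\beta_1,\beta_2]$; combined with $2\gamma=0$ and $\gamma^2=-\beta_1\gamma$, this would exhibit $\CH^*(BG)$ as the $\Z[\beta_1,\beta_2]$-module $\Z[\beta_1,\beta_2]\oplus\mathbb F_2[\beta_1,\beta_2]\cdot\gamma$, matching the claimed presentation.

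The main obstacle is this last step. The cleanest route is a Totaro-style approximation: for a fixed degree $n$, pick $N\gg n$ and consider the open $U\subset V^{\oplus N}\oplus\Gamma^{\oplus N}$ on which $G$ acts freely, so that $\CH^{\leq n}([U/G])=\CH^{\leq n}(BG)$. The complement of $U$ is the union of three codimension-$N$ loci where either coordinate of $V$, or the $\Gamma$-part, vanishes in all $N$ copies, and an equivariant excision argument on this stratification computes $\CH^*([U/G])$ in bounded degree from the Chow rings of the simpler quotient stacks that parametrize the strata. Matching the result against the proposed presentation degree by degree finishes the proof.
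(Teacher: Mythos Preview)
The paper does not give its own proof of this statement: it is quoted verbatim as \cite[Theorem~5.2]{Eric} and used as a black box. So there is no argument in the paper to compare your proposal against.

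On the proposal itself: your derivation of the two relations is correct and pleasant. The isomorphism $V\otimes\Gamma\cong V$ really does hold (the map $e_1\mapsto e_1$, $e_2\mapsto -e_2$ intertwines the two $G$-actions), and comparing $c_2$ on both sides gives $\gamma(\gamma+\beta_1)=0$ exactly as you say.

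The completeness half, however, has a genuine gap. You assert that it suffices to show $\pi^*$ induces an isomorphism $\CH^*(BG)/(\gamma)\xrightarrow{\sim}\Z[\beta_1,\beta_2]$, and that together with $2\gamma=0$ and $\gamma^2=-\beta_1\gamma$ this forces $\CH^*(BG)\cong\Z[\beta_1,\beta_2]\oplus\mathbb F_2[\beta_1,\beta_2]\cdot\gamma$. But this last implication is not valid: knowing the quotient $\CH^*(BG)/(\gamma)$ and the relations satisfied by $\gamma$ does not prevent the ideal $(\gamma)$ from collapsing further---nothing you have written rules out, say, $\beta_2\gamma=0$. Equally, the claim $\ker(\pi^*)=(\gamma)$ is only half-proved: the inclusion $(\gamma)\subseteq\ker(\pi^*)$ is clear, but the reverse needs an argument (the transfer gives only that $\ker(\pi^*)$ is $2$-torsion). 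Your final paragraph gestures at a Totaro approximation and an excision computation on a stratification of the non-free locus, and that strategy can certainly be made to work, but as written it is a plan rather than a proof; the actual identification of the Chow groups of the strata and the bookkeeping that pins down the $\gamma$-torsion is where all the content lies.
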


\subsection{Construction of the morphism}

We start with defining a morphism from the quotient prestack

\begin{equation} \label{eqn: def morphism}
    \alpha^{\mathrm{pre}}:\mathcal{X}^{\mathrm{pre}}:=\bigg[ \frac{\mathrm{Sym}^4 (V^\vee) \otimes \mathrm{det}(V) \otimes \Gamma \smallsetminus \Delta}{G}\bigg]^{\mathrm{pre}} \to \R_2.
\end{equation}
as follows.

The map \eqref{eqn: def morphism} is defined at the level of objects as follows. 
Given a morphism $F:S \to \mathrm{Sym}^4 (V^\vee) \smallsetminus \Delta$, we map $F$ to the triple $(C/S,\eta, \beta)$ where 
\begin{equation}\label{eqn: def C}
C = \underline{\mathrm{Spec}}_{\mathbb{P}^1_S}(\mathcal{O}_{\mathbb{P}^1_S} \oplus \mathcal{O}_{\mathbb{P}^1_S}(-3)) 
\end{equation}
where the $\mathcal{O}_{\mathbb{P}^1_S}$-algebra structure on $\mathcal{O}_{\mathbb{P}^1_S} \oplus \mathcal{O}_{\mathbb{P}^1_S}(-3)$ is given by 
$$
 \mathcal{O}_{\mathbb{P}^1_S}(-3) \otimes \mathcal{O}_{\mathbb{P}^1_S}(-3) \xrightarrow{XY.F} \mathcal{O}_{\mathbb{P}^1_S}. 
$$
where $XY: S \to \mathrm{Sym}^2(V^\vee)$ is the constant function equal to $XY$ and $XY.F$ denotes its product with the polynomial $F$. Clearly the curve $C$ comes with a map
\begin{equation}\label{eqn: map q}
q: C \to \P_S.
\end{equation}

We have two sections $\sigma_0,\sigma_{\infty}: S \to C$, corresponding to the $0$ and $\infty$ section of $\mathbb{P}^1_S \to S$. As explained in \cite[Section 1]{Pernice}, given a section $s: S \to \P_S$ and an $\O_{\P_S}$-algebra $\mathcal{A}$, one has the functorial bijective map
$$
\mathrm{Hom}_{\P_S}(S, \underline{\mathrm{Spec}}_{\mathbb{P}^1_S}(\mathcal{A})) \to \mathrm{Hom}_{\O_S \mathrm{-alg}}(s^*(\mathcal{A}),\O_S).
$$
Note now that 
$$
\mathrm{Hom}_{\O_S \mathrm{-alg}}(0^*(\O_{\P_S} \oplus \O_{\P_S}(-3)),\O_S) \subseteq \mathrm{Hom}_{\O_S \mathrm{-mod}}(0^* \O_{\P_S}(-3), \O_S)
$$
is the subset of $\O_S$-module homomorphisms $j_0$ such that the two maps $j_0^{\otimes 2}: 0^* \O_{\P_S}(-6) \to \O_S$ and $XY.F:0^* \O_{\P_S}(-6) \to \O_S$ coincide. But the last morphism is clearly $0$, thus we can take $j_0=0$. In conclusion, the section of $\sigma_0$ is no further data. Similarly, the section $\sigma_\infty$ is no further data.

We set 
\begin{equation}\label{eqn: def eta}
    \eta= q^* \O_{\P_S}(1) \otimes \O_C(-\sigma_0-\sigma_\infty)
\end{equation}
and finally the isomorphism

\begin{equation}\label{eqn: def trivialization}
    \eta^{\otimes 2} \xrightarrow{\sim} \O_C
\end{equation}
is given by $XY$. 

\begin{remark}\label{rmk: iso O(1)=omega}
    As explained in \cite[proof of Proposition 3.1]{Vistoli}, in our situation, an isomorphism $q^* \O_{\P_S}(1) \cong \omega_{C/S}$ is functorially the same data as an isomorphism 
    $$
    \omega_{\P_S/S}(-1) \otimes \O_{\P_S}(3) \cong \O_{\P_S}.
    $$ 
    We have such an isomorphism, up to fixing once and for all an isomorphism $\omega_{\P} \cong \O_{\P}(-2)$. We will fix such an isomorphism.
\end{remark}

The next lemma will be useful later.

\begin{lemma}\label{lemma: uniqueness of sigma0 u sigmainfty}
    The union $\sigma_0(S) \cup \sigma_\infty (S) \subseteq C$ is the unique effective relative Cartier divisor of $\omega_{C/S} \otimes \eta^\vee$.
\end{lemma}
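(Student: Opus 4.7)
The plan is to first identify the line bundle $\omega_{C/S} \otimes \eta^\vee$ explicitly as $\O_C(\sigma_0 + \sigma_\infty)$, and then use a fiberwise $h^0$-computation together with cohomology and base change to conclude that its space of global sections is a line bundle on $S$, so that the effective relative Cartier divisor in its class is unique.

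For the first step, I would use the isomorphism $\omega_{C/S}\cong q^*\O_{\P_S}(1)$ fixed in Remark \ref{rmk: iso O(1)=omega}, together with the definition \eqref{eqn: def eta} of $\eta$, to compute
$$
\omega_{C/S}\otimes \eta^\vee \;\cong\; q^*\O_{\P_S}(1)\otimes q^*\O_{\P_S}(-1)\otimes \O_C(\sigma_0+\sigma_\infty) \;\cong\; \O_C(\sigma_0+\sigma_\infty).
$$
In particular $\sigma_0(S)\cup \sigma_\infty(S)$ is an effective relative Cartier divisor representing this line bundle, so the question reduces to showing that any other such divisor equals it.

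The heart of the argument is the fiberwise computation. For any geometric point $s\to S$, the fiber $C_s$ is a smooth genus $2$ curve, $q_s$ is its hyperelliptic double cover, and $\sigma_0(s),\sigma_\infty(s)$ are two of its six Weierstrass points. By Serre duality on $C_s$,
$$
h^1\bigl(C_s,\,\O_{C_s}(\sigma_0(s)+\sigma_\infty(s))\bigr)=h^0(C_s,\eta_s),
$$
and $\eta_s$ is a nontrivial $2$-torsion line bundle (by definition of a Prym pair), hence has no nonzero global sections. Riemann–Roch then gives $h^0(\O_{C_s}(\sigma_0(s)+\sigma_\infty(s)))=1$.

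To conclude in families, I would invoke cohomology and base change (Grauert's theorem): since $h^0$ of $\omega_{C/S}\otimes \eta^\vee$ is constantly equal to $1$ on fibers (and $C\to S$ is flat proper), the pushforward $f_*(\omega_{C/S}\otimes \eta^\vee)$ is a line bundle on $S$ whose formation commutes with base change. The tautological section cutting out $\sigma_0(S)\cup \sigma_\infty(S)$ is nowhere-vanishing, hence trivializes this line bundle. Any effective relative Cartier divisor $D$ on $C$ with $\O_C(D)\cong \omega_{C/S}\otimes \eta^\vee$ corresponds to a section of $\omega_{C/S}\otimes \eta^\vee$ which is a non-zero-divisor on every fiber; by the trivialization, such a section is a unit multiple of the tautological one and therefore cuts out the same divisor. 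The only real subtlety is making sure the base-change argument applies uniformly in $S$, which follows from constancy of fiberwise dimensions together with flatness of $f$.
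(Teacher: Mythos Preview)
Your proof is correct and follows essentially the same approach as the paper: the paper's one-line argument is that on each geometric fiber the line bundle has degree $2$ and is not isomorphic to $\omega_{C_s}$ (since $\eta_s$ is nontrivial), hence $h^0=1$; your Serre duality plus Riemann--Roch computation is exactly this, spelled out. Your explicit identification $\omega_{C/S}\otimes\eta^\vee\cong\O_C(\sigma_0+\sigma_\infty)$ and your Grauert/base-change step to pass from fibers to the family are details the paper leaves implicit, but the core idea is the same.
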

\begin{proof}
    The restriction of $\omega_{C/S} \otimes \eta^\vee$ to each geometric fiber $C_s$ is a degree $2$ line bundle not isomorphic to $\omega_{C_s}$, thus $h^0(C_s,\omega_{C/S} \otimes \eta^\vee)=1$ for all $s \in S$. 
\end{proof}

Now, we define the morphism \eqref{eqn: def morphism} at the level of morphisms. A morphism 
$$
(F: S \to \mathrm{Sym}^4 (V^\vee) \otimes \mathrm{det}(V) \otimes \Gamma \smallsetminus \Delta) \to (F': S' \to \mathrm{Sym}^4 (V^\vee) \otimes \mathrm{det}(V) \otimes \Gamma \smallsetminus \Delta)
$$
in $\mathcal{X}^{\mathrm{pre}}$ is by definition the data of morphisms $f: S \to S'$ and $g:S \to G$ such that $F' \circ f = g. F$, where $g . F$ denotes the action of $g$ on $F$.

\begin{remark}
    Write $g(s)= (a(s),b(s),\sigma^i(s))$ for $s \in S$. Then, spelling out the action of $G$ on $F$, this is equivalent to 
    \begin{equation}\label{eqn: action on F}
        F'(f(s))= a(s) b(s) \bigg( F \circ (a(s),b(s),\sigma^i(s))^{-1} \bigg)
    \end{equation}
for $s \in S$.
\end{remark}

Let $(C/S, \eta,\beta)$ and $(C'/S', \eta',\beta')$ be the objects in $\R_2$ associated to $F$ and $F'$ respectively. Call also 
$$
\phi= f \times g: \P_S \to \P_{S'}.
$$ 
Here, we view $g$ as an automorphism of $\P$ via the isomorphism in Proposition \ref{prop: identification of G}. We have a morphism 
\begin{equation}\label{eqn: def map on morphism part 1}
\varphi: C \to C'
\end{equation}
given by
$
\varphi^{\#}: \phi^* \O_{\P_{S'}}(-3) \to \O_{\P_{S}}(-3)
$
defined at $(s,\ell) \in \P_S=S \times \P$ by 
$$
 (\varphi^{\#}(\phi^*(v))) (x,\ell)= g(s)^{-1}. \ v (f(s),g(s). \ell)
$$

Again, $G$ acts on $(\P, \O(-3))$ via the isomorphism in Proposition \ref{prop: identification of G}.

\begin{lemma}
    The morphism
    $$
    \phi^*(\O_{\P_{S'}} \oplus \O_{\P_{S'}}(-3)) \to \O_{\P_{S}} \oplus \O_{\P_{S}}(-3)
    $$
    induced by $\varphi^{\#}$ is an homomorphism of $\O_{\P_S}$-algebras.
\end{lemma}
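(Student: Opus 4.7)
The strategy is to reduce the statement to a $G$-equivariance check for the section $XY$. Since $\O_{\P_S} \oplus \O_{\P_S}(-3)$ is completely determined, as a graded $\O_{\P_S}$-algebra, by its nontrivial multiplication $m_F: \O_{\P_S}(-3)^{\otimes 2} \to \O_{\P_S}$ (and similarly $m_{F'}$), and by construction these are given by the sections $XY\cdot F$ and $XY\cdot F'$ of $\O(6)$, the lemma reduces to verifying the identity $m_F \circ (\varphi^\#)^{\otimes 2} = \phi^* m_{F'}$ after identifying $\phi^*\O_{\P_{S'}}$ canonically with $\O_{\P_S}$.

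The key input I would first establish is that $XY$ is a $G$-invariant element of $\mathrm{Sym}^2(V^\vee) \otimes \det(V) \otimes \Gamma$. This is a direct check on the two types of elements of $G$ from Notation~\ref{Notation}: for $(a,b;0)$ the $\mathrm{Sym}^2(V^\vee)$-action sends $XY$ to $XY/(ab)$ while $\det(V)\otimes \Gamma$ acts by $(ab)\cdot 1 = ab$; for $(a,b;1)$ the swap $X\leftrightarrow Y$ combined with the rescaling still sends $XY$ to $YX/(ab) = XY/(ab)$, while $\det(V)\otimes \Gamma$ acts by $(-ab)\cdot(-1) = ab$. In both cases the two factors cancel, so $XY$ is invariant.

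Combining this invariance with the cocycle relation $F'\circ f = g\cdot F$ coming from a morphism in $\mathcal X^{\mathrm{pre}}$ (equation~\eqref{eqn: action on F}), I would then deduce that the section $XY\cdot F$ on $\P_S$ matches, under the isomorphism dual to $(\varphi^\#)^{\otimes 2}$, the pullback $\phi^*(XY\cdot F')$. This is exactly the compatibility $m_F \circ (\varphi^\#)^{\otimes 2} = \phi^* m_{F'}$ that we need: on the $\O$-summand $\varphi^\#$ is the tautological isomorphism, and on the $\O(-3)$-summand the formula for $\varphi^\#$ transports fibers by the very $G$-action in which $XY$ is invariant.

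The main obstacle I expect is bookkeeping the precise $G$-action on $\O_{\P}(-3)$, which through Proposition~\ref{prop: identification of G} is the reparametrized $\mathrm{GL}_2$-action $[A] \mapsto \det(A)A$ involving the cube root of the determinant, rather than the naive $\mathrm{GL}_2$-action from Notation~\ref{Notation}. The twist by $\det(V)\otimes\Gamma$ in the statement of Theorem~\ref{thm: presentation R2} is precisely what accounts for this discrepancy on the tensor square $\O_{\P}(-6)$, so that once it is tracked carefully, the computation becomes a direct comparison of scalars in trivializations of the one-dimensional fibers of $\O(6)$.
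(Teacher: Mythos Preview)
Your proposal is correct and is essentially the paper's own argument: both reduce to the identity \eqref{eqn: varphi is homo of algebars} and verify it via the explicit cube-root description of the $G$-action on $\O_{\P}(-3)$ from Proposition~\ref{prop: identification of G} together with the cocycle relation \eqref{eqn: action on F}. Your packaging of the $XY$ computation as the invariance statement $XY \in (\mathrm{Sym}^2(V^\vee)\otimes\det(V)\otimes\Gamma)^G$ is exactly what the paper does in its third displayed line, where the factor $a^{-1}b^{-1}(XY)$ emerges from precomposing $XY$ with $g/\det(g)^{1/3}$.
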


\begin{proof}
    Here it is crucial to take into account how $G$ acts on the space $\mathrm{Sym}^4(V^\vee) \smallsetminus \Delta$. Since the algebra structure on $\mathcal{O}_{\P} \oplus \mathcal{O}_{\P}(-3)$ is determined by $XY. F$, the statement is equivalent to the identity
    \begin{equation}\label{eqn: varphi is homo of algebars}
    ((\varphi^{\#}) ^\vee)^{\otimes 2} \bigg( \phi^*(XY F') \bigg) = XY F
    \end{equation}
    as sections of $\O_{\P_S}(6)$. We check this at a point $(s,\ell) \in \P_S= S \times \P$:
    \begin{align*}
    ((\varphi^{\#}) ^\vee)^{\otimes 2} \bigg( \phi^*(XY F') \bigg)(s,\ell)&= g(s)^{-1}. \bigg( (XYF')(f(s),g(s). \ell) \bigg) \\
    &= (XYF') \circ \bigg( \frac{(a(s),b(s);\sigma^i(s))}{\mathrm{det}(a(s),b(s);\sigma^i(s))^{1/3}} \bigg) (s,\ell) \\
    &=\bigg( a(s)^{-1} b(s)^{-1} (XY) (F' \circ (a(s),b(s);\sigma^i(s))) \bigg) (s,\ell) \\
    &= (XY F)(s,\ell)
    \end{align*}
    where we wrote $g(s)=(a(s),b(s);\sigma^i(s)) \in G(S)$ with $\sigma^i(s)=\delta_0^i$ as explained in Notation \ref{Notation}. Note that in the last equality we used Equation \eqref{eqn: action on F}.
\end{proof}

This shows that the morphism \eqref{eqn: def map on morphism part 1} is well-defined. 

To conclude the definition of $\alpha^{\mathrm{pre}}$ on morphisms we need to provide, étale locally on $S$, a morphism $\tau$ between $\varphi^* \eta'$ and $\eta$ making the diagram \eqref{eqn: diagram of sheaves} commute. Note that, since $g$ preserves $\{0,\infty\}$, we have
$$
\varphi^{-1} (\sigma_0'(S') \cup \sigma_\infty' (S') )= \sigma_0(S) \cup \sigma_\infty (S)
$$
and thus 
\begin{equation}\label{eqn: def morphism 3}
\varphi^* (-\sigma_0'(S') - \sigma_\infty' (S'))= -\sigma_0(S) - \sigma_\infty (S)
\end{equation}
as Weil divisors. Furthermore, étale locally on $S$, we have an isomorphism
\begin{equation}\label{eqn: def morphism 4}
\phi^* \O_{ \P_{S'}}(1) \xrightarrow{\sim} \O_{\P_S}(1)
\end{equation}
that over a point $(s,\ell) \in \P_S$  maps $\varphi \in (g(s). \ell)^\vee$ to $\varphi \circ \frac{ g(s) }{\sqrt{a(s) b(s)}} \in \ell^\vee$. A square root of $ab$ is defined étale locally on $S$. We claim that we can take $\tau$ to be the product of the morphisms in \eqref{eqn: def morphism 3} and \eqref{eqn: def morphism 4}. 

\begin{lemma}\label{lemma: def of tau}
    Let $\tau$ be the isomorphism $\varphi^* \eta' \to \eta$ obtained as the product of the morphisms in \eqref{eqn: def morphism 3} and \eqref{eqn: def morphism 4}. Then, the diagram \eqref{eqn: diagram of sheaves} commutes.
\end{lemma}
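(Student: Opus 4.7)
The plan is to verify commutativity of \eqref{eqn: diagram of sheaves} by evaluating both routes on a single nowhere-vanishing section of $\varphi^*\eta'^{\otimes 2}$, since all four arrows are isomorphisms of line bundles on $C$ (we are free to pass to the \'etale cover of $S$ on which $\sqrt{a(s)b(s)}$ is defined). The natural test section is the pullback $\varphi^*(XY)$, where $XY \in H^0(\P, \mathcal{O}_{\P}(2)(-[0]-[\infty]))$ is the trivializing section that defines $\beta'$ through \eqref{eqn: def trivialization}.

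The next step is to compute $\tau^{\otimes 2}(\varphi^*(XY))$. By construction $\tau$ is a tensor product: the factor coming from \eqref{eqn: def morphism 3} is the identity on $\mathcal{O}_C(-\sigma_0-\sigma_\infty)$ once one uses $\varphi^{-1}(\sigma_0'(S')\cup\sigma_\infty'(S')) = \sigma_0(S)\cup\sigma_\infty(S)$, while the factor coming from \eqref{eqn: def morphism 4} rescales $\phi^*\mathcal{O}_{\P_{S'}}(1)\to \mathcal{O}_{\P_S}(1)$ by $\frac{g(s)}{\sqrt{a(s)b(s)}}$. Squaring, $\tau^{\otimes 2}$ acts on the $\phi^*\mathcal{O}(2)$ factor by pullback along $g(s)$ divided by $a(s)b(s)$, so at a point $(s,\ell)\in \P_S$ the quadratic form $\varphi^*(XY)|_{(s,\ell)} = XY|_{g(s)\cdot\ell}$ is mapped to $\frac{1}{ab}(XY\circ g(s))\big|_\ell$.

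The main computation is the identity $XY\circ g(s) = a(s)b(s)\cdot XY$, which I expect to be the only non-formal step. For $g(s) = (a,b;0)$ this is immediate from $X\circ g(s) = aX$, $Y\circ g(s) = bY$; for $g(s) = (a,b;1)$ one has $X\circ g(s) = aY$, $Y\circ g(s) = bX$, and the product is again $ab\cdot XY$. This is precisely the numerical compatibility that the twist $\mathrm{det}(V)\otimes \Gamma$ in Definition \ref{notation: representations} is engineered to supply, and it cancels the $\frac{1}{ab}$ coming from $\tau^{\otimes 2}$. Plugging in gives $\tau^{\otimes 2}(\varphi^*(XY)) = XY$.

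To conclude, the right-then-down route sends $\varphi^*(XY) \mapsto XY \mapsto 1 \in \mathcal{O}_C$, since $\beta$ is defined so that $XY\mapsto 1$. The left-then-down route sends $\varphi^*(XY)$ to $\varphi^*(\beta'(XY)) = \varphi^*(1) = 1 \in \varphi^*\mathcal{O}_{C'}$, and then to $1 \in \mathcal{O}_C$ under the canonical identification forming the unlabelled bottom arrow. Both routes agree, so \eqref{eqn: diagram of sheaves} commutes and $\tau$ has the desired compatibility property.
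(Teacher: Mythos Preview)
Your argument is correct and is exactly the ``easy check'' the paper leaves to the reader; the paper gives no proof of its own beyond that phrase, so there is nothing to compare. One small remark: your aside that the identity $XY\circ g(s)=a(s)b(s)\cdot XY$ is ``what the twist $\det(V)\otimes\Gamma$ is engineered to supply'' is slightly misplaced---that twist governs the $G$-action on $\Sym^4(V^\vee)$ and is what makes $\varphi^{\#}$ an algebra homomorphism (equation \eqref{eqn: varphi is homo of algebars}), whereas here you are simply using that the line $k\cdot XY\subset\Sym^2(V^\vee)$ is a $G$-subrepresentation on which $g$ acts by $\det(g)$ up to the sign absorbed by $\Gamma$. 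The computation itself is unaffected.
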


\begin{proof}
    This is an easy check left to the reader.
\end{proof}

This concludes the definition of the morphism \eqref{eqn: def morphism}.

\subsection{Proof of Theorem \ref{thm: presentation R2}} \label{sec: proof of presentation}

By the universal property of stackification, the morphism \eqref{eqn: def morphism} yields a morphism
\begin{equation} \label{eqn: def morphism of stacks}
    \alpha: \mathcal{X}:=\bigg[ \frac{\mathrm{Sym}^4 (V^\vee) \otimes \mathrm{det}(V) \otimes \Gamma \smallsetminus \Delta}{G}\bigg] \to \R_2
\end{equation}
which we aim to prove is an isomorphism of stacks.

We will use the following well-known lemma. 

\begin{lemma}\label{lemma: stacky lemma}
Let $b : \mathcal{X}^{\mathrm{pre}} \to \mathcal{X}$ be a stackification. Let $\alpha^{\mathrm{pre}} : \mathcal{X}^{\mathrm{pre}} \to \mathcal{Y}$ be a morphism to a stack $\mathcal{Y}$, and let $\alpha: \mathcal{X} \to \mathcal{Y}$ be the induced morphism. If $\alpha^{\mathrm{pre}}$ satisfies the following two conditions:
\begin{itemize}
    \item[(i)] it is fully faithful,
    \item[(ii)] for every object $Y \in \mathcal{Y}(S)$ over $S$ there is a covering $\widetilde{S} \to S$ and an object $\widetilde{X} \in \mathcal{X}^{\mathrm{pre}}(\widetilde{S})$ such that $\alpha^{\mathrm{pre}}(\widetilde{X})= Y_{|\widetilde{S}}$,
\end{itemize}
then $\alpha$ is an isomorphism.
\end{lemma}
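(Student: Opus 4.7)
The plan is to use the explicit two-step description of stackification recalled in Remark \ref{remark: stackification}. First I factor $\alpha$ through the intermediate prestack: the universal property of the first step produces a morphism $\alpha^{\mathrm{st}_1} : \mathcal{X}^{\mathrm{st}_1} \to \mathcal{Y}$ (valid because $\mathcal{Y}$ is a stack, so in particular morphisms in $\mathcal{Y}$ glue), and the universal property of the second step then produces $\alpha: \mathcal{X}\to \mathcal{Y}$. Verifying that $\alpha$ is an isomorphism of stacks then reduces to checking fully faithfulness and essential surjectivity in the étale-local sense.

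For fully faithfulness I would first argue for $\alpha^{\mathrm{st}_1}$. The morphism presheaves $\mathrm{Mor}_{\mathcal{X}^{\mathrm{st}_1}}(X,X')$ are by construction the sheafifications of the corresponding presheaves in $\mathcal{X}^{\mathrm{pre}}$, while in $\mathcal{Y}$ the morphism presheaves are already sheaves (since $\mathcal{Y}$ is a stack). Hypothesis (i) identifies the $\mathcal{X}^{\mathrm{pre}}$-presheaf with the $\mathcal{Y}$-sheaf, and sheafification does not alter sheaves, so $\alpha^{\mathrm{st}_1}$ is fully faithful. To transfer this to $\alpha$, any two objects of $\mathcal{X}(S)$ can be trivialized on some étale cover of $S$ to objects of $\mathcal{X}^{\mathrm{st}_1}$; morphisms between them in both $\mathcal{X}$ and $\mathcal{Y}$ are determined by descent along the cover, so fully faithfulness of $\alpha^{\mathrm{st}_1}$ propagates.

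For essential surjectivity, given $Y \in \mathcal{Y}(S)$, hypothesis (ii) produces an étale cover $\widetilde{S}\to S$ and $\widetilde{X}\in \mathcal{X}^{\mathrm{pre}}(\widetilde{S})$ with $\alpha^{\mathrm{pre}}(\widetilde{X})\cong Y|_{\widetilde{S}}$. The tautological descent datum of $Y$ on $\widetilde{S}\times_S\widetilde{S}$ transports, via fully faithfulness of $\alpha^{\mathrm{st}_1}$, to an isomorphism between the two pullbacks of $\widetilde{X}$ in $\mathcal{X}^{\mathrm{st}_1}$, and the cocycle identity on triple fiber products is likewise transferred by fully faithfulness. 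Since $\mathcal{X}$ is a stack, this descent datum glues to an object $X \in \mathcal{X}(S)$, and by construction $\alpha(X)\cong Y$.

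The only nontrivial point, and the one I would take most care with, is keeping the two steps of stackification properly separated: essential surjectivity genuinely requires step 2 (gluing of objects), while fully faithfulness is governed by step 1 (sheafification of morphisms). Beyond this bookkeeping the argument is purely formal, which is why the lemma is standard.
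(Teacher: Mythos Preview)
The paper does not actually prove this lemma: it is stated there as ``well-known'' and used without argument. Your proposal supplies exactly the standard proof, and it is correct. The decomposition into the two stackification steps is precisely the right organizing principle: step~1 (sheafification of Hom presheaves) governs fully faithfulness, while step~2 (effectivity of descent data for objects) governs essential surjectivity, and you use hypothesis~(i) for the former and hypothesis~(ii) together with the already-established fully faithfulness for the latter. The one point worth making slightly more explicit is in your essential surjectivity step: when you say ``by construction $\alpha(X)\cong Y$,'' this relies on the fact that both $\alpha(X)$ and $Y$ are objects of the stack $\mathcal{Y}$ equipped with isomorphic descent data relative to the cover $\widetilde{S}\to S$, and effectivity of descent in $\mathcal{Y}$ (plus uniqueness up to unique isomorphism) then forces $\alpha(X)\cong Y$. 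But this is implicit in what you wrote, and the argument is sound.
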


We start with showing that $\alpha^{\mathrm{pre}}$ is fully faithful.

\begin{proposition}\label{prop: fully faithfullness}
    The morphism $\alpha^{\mathrm{pre}}$ satisfies condition $(i)$ of Lemma \ref{lemma: stacky lemma}.
\end{proposition}

\begin{proof}
    First, we notice that $\alpha^{\mathrm{pre}}$ factors as 
    $$
    \alpha^{\mathrm{pre}}: \mathcal{X}^{\mathrm{pre}} \to \mathcal R_2^{\mathrm{st}_1} \to \mathcal R_2.
    $$
    Since the second map is fully faithful by construction, we only need to prove that the first map is faithful. We will refer to again by the symbol $\alpha^{\mathrm{pre}}$.
    It is enough to prove that $\alpha^{\mathrm{pre}}(S): \mathcal{X}^{\mathrm{pre}}(S) \to \R_2^{\mathrm{st}_1}(S)$ is fully faithful for all schemes $S$. Let $u,u':S \to \mathcal{X}^{\mathrm{pre}}$ be objects in $\mathcal{X}$ and let $\alpha^{\mathrm{pre}}(u)=(\pi: C \to S,\eta,\beta)$ and $\alpha^{\mathrm{pre}}(u')=(\pi':C' \to S,\eta',\beta')$ be their images under $\alpha^{\mathrm{pre}}$. We wish to show that the induced map
    \begin{equation}\label{eqn: fully faithfullness}
    \mathrm{Mor}_{\mathcal{X}(S)}(u,u') \to \mathrm{Mor}_{\R_2^{\mathrm{st}_1}(S)}((C/S,\eta,\beta),(C'/S,\eta',\beta'))
    \end{equation}
    is bijective.

    Let $\varphi: C \to C'$ be an isomorphism for which there exists an isomorphism $\tau: \varphi^* \eta' \to \eta$ such that the diagram \eqref{eqn: diagram of sheaves} commutes.
    First of all, we note that there exists a unique isomorphism $\phi: \P_S \to \P_S$ such that the following diagram 
    \begin{equation}\label{eqn: required commutativity }
    \begin{tikzcd}
    C\arrow{r}{\varphi} \arrow{d}{q} & C'\arrow{d}{q'} \\%
    \P_S \arrow{r}{\phi} & \P_S
    \end{tikzcd}
    \end{equation}
    commutes. The uniqueness of $\phi$ is clear.
    To show the existence of $\phi$, we consider the corresponding diagram where the vertical arrows are the canonical maps and the lower horizontal arrow is given by the pullback of forms via $\varphi^*$
    \[
    \begin{tikzcd}
    C\arrow{r}{\varphi} \arrow{d}{p} & C'\arrow{d}{p'} \\%
    \Pp (\pi_* \omega_{C/S})^\vee \arrow{r}{\varphi^*} & \Pp (\pi'_* \omega_{C'/S})^\vee.
    \end{tikzcd}
    \]
    This diagram clearly commutes. 

    By Remark \ref{rmk: iso O(1)=omega}, the maps $q$ and $q'$ are given by two sections of $\omega_{C/S}$ and $\omega_{C'/S}$ respectively. These two sections yields commutative diagrams 
    \begin{equation*}\label{eqn: commutative triangles}
    \adjustbox{scale=0.95,center}{
        \begin{tikzcd}
        & C  \arrow{dr}{q} \arrow{dl}{p}  &  &   & C'  \arrow{dl}{p'} \arrow{dr}{q'}\\
        \Pp (\pi_* \omega_{C /S})^\vee \arrow{rr}{h} & & \P_{S} & \Pp (\pi'_* \omega_{C' /S})^\vee \arrow{rr}{h'} & & \P_{S}
        \end{tikzcd}
    }
    \end{equation*}
    where $h$ and $h'$ are isomorphisms. In these terms, the morphism $\phi$ in diagram \eqref{eqn: required commutativity } is the composition $h' \circ \varphi^* \circ h^{-1}: \P_{S} \to \P_{S}$.
    
    At this point, the morphism $\varphi$ induces an isomorphism 
    $$
    \varphi^{\#}: \phi^* \O_{\P_S}(-3) \to \O_{\P_S}(-3).
    $$
    This shows that $\varphi$ can be the image of a unique element $ g \in \underline{\mathrm{Aut}}(\P_S, \O_{\P_S}(-3))$ and injectivity of \eqref{eqn: fully faithfullness} follows. 

    In order to prove surjectivity of \eqref{eqn: fully faithfullness}, it is enough to check that $g \in \underline{\mathrm{Aut}}_{\{ 0,\infty\}}(\P_S, \O_{\P_S}(-3))$. The existence of $\tau$ implies that there is an isomorphism
    $$
    \varphi^* \O_{C'}(\sigma_0' + \sigma_\infty') \cong \O_{C}(\sigma_0 + \sigma_\infty).
    $$
    and, from Lemma \ref{lemma: uniqueness of sigma0 u sigmainfty}, we obtain that
    $$
    \varphi^{-1}(\sigma_0'(S) \cup \sigma_\infty'(S))=\sigma_0(S) \cup \sigma_\infty(S).
    $$
    Finally, the commutativity of \eqref{eqn: required commutativity } implies that $\phi(0(S) \cup \infty(S))=0'(S) \cup \infty'(S)$ and thus that $g \in \underline{\mathrm{Aut}}_{\{ 0,\infty\}}(\P_S, \O_{\P_S}(-3))$. This concludes the proof.
\end{proof}

\begin{proposition}\label{prop: essential surjectivity}
    The morphism $\alpha^{\mathrm{pre}}$ satisfies condition $(ii)$ of Lemma \ref{lemma: stacky lemma}.
\end{proposition}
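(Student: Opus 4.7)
The plan is to reverse-engineer the construction of $\alpha^{\mathrm{pre}}$: starting from a family $(C/S,\eta,\beta) \in \R_2(S)$, I will produce, after a chain of étale refinements $\widetilde{S} \to S$, a morphism $F : \widetilde{S} \to \mathrm{Sym}^4(V^\vee) \smallsetminus \Delta$ such that $\alpha^{\mathrm{pre}}(F)$ recovers the pullback of $(C/S,\eta,\beta)$ to $\widetilde{S}$. The geometric input is Lemma \ref{lemma: idea}: on each geometric fiber $C_s$, the sheaf $\omega_{C_s} \otimes \eta_s^\vee$ has a unique effective representative, supported on a pair of distinct Weierstrass points.

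The first step is to globalize this observation. Non-triviality of $\eta$ together with Riemann--Roch give $h^0(\omega_{C_s} \otimes \eta_s^\vee) = 1$, so cohomology and base change turn $N := \pi_*(\omega_{C/S} \otimes \eta^\vee)$ into a line bundle on $S$. Trivializing $N$ Zariski-locally produces a nowhere-vanishing section of $\omega_{C/S} \otimes \eta^\vee$, whose zero locus is a relative effective Cartier divisor $D \subset C$ of degree $2$ supported on ramification points. Since $D$ reduces to two distinct points on every geometric fiber, $D \to S$ is étale of degree $2$, and a further étale double cover splits $D = \sigma_0 \sqcup \sigma_\infty$ into two sections.

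Next I would trivialize the ambient hyperelliptic geometry. Let $Q := \mathbb{P}(\pi_*\omega_{C/S})^\vee$ and consider the relative canonical double cover $q : C \to Q$. I would pass to an étale cover of $S$ that (i) trivializes $Q \cong \P_S$, (ii) uses an automorphism of $\P_S$ (lifting $\mathrm{PGL}_2$ to $\mathrm{GL}_2$) sending $q(\sigma_0)$ and $q(\sigma_\infty)$ to the $0$- and $\infty$-sections, and (iii) trivializes the residual line bundle on $S$ needed to identify $q_*\mathcal{O}_C$ with $\mathcal{O}_{\P_S} \oplus \mathcal{O}_{\P_S}(-3)$. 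After all these reductions, the algebra structure on $q_*\mathcal{O}_C$ is a section of $\mathcal{O}_{\P_S}(6)$ vanishing on $\{X=0\} \cup \{Y=0\}$, so it has the form $XY \cdot F$ for a unique morphism $F : \widetilde{S} \to \mathrm{Sym}^4(V^\vee)$. Smoothness of $C$ together with non-triviality of $\eta$ force $F$ to avoid $\Delta$, because $XY \cdot F$ must be reduced on every fiber.

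It then remains to identify $(\eta,\beta)$ with the data assembled by $\alpha^{\mathrm{pre}}(F)$. Under Remark \ref{rmk: iso O(1)=omega}, the constructed $\eta' = q^*\mathcal{O}_{\P_S}(1) \otimes \mathcal{O}_C(-\sigma_0-\sigma_\infty)$ equals $\omega_{C/\widetilde{S}} \otimes \mathcal{O}_C(-D)$, and the section defining $D$ supplies a canonical isomorphism $\eta \cong \eta'$. Under this identification $\beta$ and the constructed $\beta'$ differ by a global unit on $\widetilde{S}$; one last étale double cover extracting a square root of that unit lets me rescale $\eta \cong \eta'$ and enforce $\beta = \beta'$. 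The step I expect to be the most delicate is justifying that $D$ really is étale over $S$ in families, since this is where the non-triviality of $\eta$ is crucial; once this is established, the remaining matching is a bookkeeping unwinding of the definitions \eqref{eqn: def C}--\eqref{eqn: def trivialization}.
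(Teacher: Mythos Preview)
Your argument is correct and takes a genuinely different route from the paper's. The paper invokes Vistoli's presentation of $\M_2$ as a black box: after an étale cover, $C'$ is already isomorphic to a curve of the form \eqref{eqn: def C} for \emph{some} $F$, and the remaining work is to adjust $F$ by an automorphism of $\P$ so that the line bundle $\eta$ matches; this is done by applying Lemma \ref{lemma: idea} at a single geometric point $\bar{s}$ and then observing that the locus of $s \in S$ where $\varphi^*\eta'_s \cong \eta_s$ is open and closed. You instead build the presentation from the Prym datum directly: using cohomology-and-base-change on $\pi_*(\omega_{C/S}\otimes\eta^\vee)$ you locate the divisor $D$ first, and then arrange the hyperelliptic coordinates around it. Your route is more self-contained (no appeal to \cite{Vistoli}) and makes the role of Lemma \ref{lemma: idea} uniform across fibers; the paper's is shorter once Vistoli's result is granted and sidesteps the étaleness of $D \to S$ altogether---which, incidentally, is less delicate than you fear: it follows from flatness of a relative effective Cartier divisor together with the fiber-wise reducedness supplied by Lemma \ref{lemma: idea}. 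Both approaches conclude identically with the square-root-of-unit rescaling to match $\beta$.
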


\begin{proof}[Proof of Proposition \ref{prop: essential surjectivity}]
    Let $X$ be an object of $\R_2$ over a scheme $S$. Up to an étale cover of $S$, we may assume that $X$ is of the form $X=(C'/S,\eta', \beta': \eta'^{\otimes 2} \xrightarrow{\sim} \O_C)$. We need to show that, up to an étale cover of $S$, there exists $F:S \to \mathrm{Sym}^4 (V^\vee) \otimes \mathrm{det}(V) \otimes \Gamma \smallsetminus \Delta$ and:
    \begin{enumerate}
        \item[(a)] an isomorphism $\varphi$ between the curve $C$ in \eqref{eqn: def C} and $C'$;
        \item[(b)] an isomorphism $\tau: \varphi^*\eta' \to \eta$ where $\eta$ is the line bundle defined in \eqref{eqn: def eta} such that the diagram \eqref{eqn: diagram of sheaves} commutes.
    \end{enumerate}

Let $\bar{s}$ be a geometric point of $S$. We will prove that (a) and (b) holds in an étale neighborhood of $\bar{s}$. By \cite{Vistoli}, we know that, perhaps after base-change by an étale cover of $S$, there exists $F$ in $\mathcal{X}^{\mathrm{pre}}$ over $S$ such that there is an isomorphism $\varphi: C \to C'$ over $S$. By Lemma \ref{lemma: idea}, we have $\eta'_{\bar{s}} = q_{\bar{s}}'^* \O(1) \otimes \O_{C'_{\bar{s}}}(-w_i-w_j)$ for exactly one subset $\{ w_i , w_j \}$ of cardinality $2$ of the set of Weierstrass points of $C'_{\bar{s}}$. Here $q'_{\bar{s}}: C'_{\bar{s}} \to \mathbb{P}^1$ is any degree $2$ map such that $q_{\bar{s}}'(\{ w_i , w_j \})= \{ 0, \infty \} \subseteq \mathbb{P}^1$. Up to composing the map \eqref{eqn: map q} with an automorphism of $\P$ fixing $\{0,\infty \}$ (which amounts to changing $F$ -- but not $C$), we may assume that 
$$
q_{\bar{s}}' \circ \varphi_{\bar{s}} = q_{\bar{s}}
$$
where $q$ is the map in \eqref{eqn: map q}. It follows then $\varphi^* \eta'_{\bar{s}} = \eta_{\bar{s}}$. The locus 
$$
\bigl\{ s \in S \ | \ \varphi^* \eta'_s \cong q_s^* \O_{\P_S}(1) \otimes \O_{C_s}(- \sigma_0(s) -\sigma_\infty(s))\bigl\}
$$
is, by Lemma \ref{lemma: idea}, open and closed in $S$ and contains ${\bar{s}}$. We may thus replace $S$ with it. It follows that 
$$
\varphi^* \eta'= \eta \otimes L
$$
where $L$ is a line bundle pulled back to $C$ from $S$. Zariski-locally on $S$, the line bundle $L$ is trivial. Up to replacing $S$ by an open subset containing $\bar{s}$ we may then assume that we have an isomorphism $\varphi^* \eta'  \xrightarrow{\tau} \eta$. The diagram 
$$
\begin{tikzcd}
    \varphi^* \eta'^{\otimes 2}\arrow{r}{\tau^{\otimes 2}} \arrow{d}{\varphi^*(\beta'^{\otimes 2}))} & \eta^{\otimes 2} \arrow{d}{\beta^{\otimes 2}} \\%
    \varphi^* \mathcal{O}_{C'} \arrow{r} & \mathcal{O}_C
    \end{tikzcd}
$$
commutative up to some $\lambda \in \mathbb{G}_m(S)$. Here $\beta$ is defined in \eqref{eqn: def trivialization}. However, étale locally around $\bar{s}$, we can find a square root $\mu \in \mathbb{G}_m(S)$ of $\lambda$ and, replacing $\tau$ with $\frac{\tau}{\mu}$, we obtain the commutativity of the above diagram. This concludes the proof of the proposition.
\end{proof}

\begin{proof}[Proof of Theorem \ref{thm: presentation R2}]
    Theorem \ref{thm: presentation R2} now follows immediately from Lemma \ref{lemma: stacky lemma} and Propositions \ref{prop: fully faithfullness} and \ref{prop: essential surjectivity}.
\end{proof}

\section{Computation of the Chow ring of \texorpdfstring{$\R_2$}{R2}}

In this section we use Theorem \ref{thm: presentation R2} and the tools in \cite{EG} to compute the Chow ring of $\R_2$. In the following, we will repeatedly use the isomorphism $\CH^*([X/G]) = \CH^*_G(X)$ without explicitly mentioning it.

The first observation is that we have a $G$-equivariant projection

\begin{equation}\label{eqn: projectivization}
    p: \mathrm{Sym}^4 (V^\vee) \otimes \mathrm{det}(V) \otimes \Gamma \smallsetminus \Delta \to \Pp \mathrm{Sym}^4 (V^\vee) \smallsetminus \underline{\Delta} 
\end{equation}
where $ \underline{\Delta} \subseteq \Pp \mathrm{Sym}^4 (V^\vee)$ is the locus of polynomials (up to non-zero scalars) having either a root at $0$ or $\infty$, or a double root in any field extension of the base field $k$.

\begin{lemma}\label{lem: chow of Gm torsor}
    The pullback $p^*$ induces an isomorphism
    $$
    \frac{\CH_G^*( \mathbb{P} (\mathrm{Sym}^4 (V^\vee) )\smallsetminus \underline{\Delta} ) }{(-h+\beta_1+\gamma)}\xrightarrow{\sim} \CH^*_G(\mathrm{Sym}^4 (V^\vee) \otimes \mathrm{det}(V) \otimes \Gamma \smallsetminus \Delta)
    $$
    where $h=c_1^G(\mathcal{O}_{\Pp \mathrm{Sym}^4 (V^\vee)}(1))$, and $\beta_1$ and $\gamma$ are defined in Notation \ref{notation: chern classes}.
\end{lemma}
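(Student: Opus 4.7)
The plan is to recognize $p$ as a $\mathbb{G}_m$-torsor associated to a concrete $G$-equivariant line bundle and then apply the standard Chow-theoretic consequence. Write $W := \mathrm{Sym}^4(V^\vee) \otimes \det(V) \otimes \Gamma$. Scalar multiplication on $W$ preserves $\Delta$ (the defining conditions---a root at $0$ or $\infty$, or a double root---are invariant under rescaling a polynomial), so $p: W \setminus \Delta \to \mathbb{P}W \setminus \underline{\Delta}$ is a $G$-equivariant $\mathbb{G}_m$-torsor. Moreover, removing the zero section from the total space of $\mathcal{O}_{\mathbb{P}W}(-1)$ canonically recovers $W \setminus \{0\}$, so this torsor is the one associated to $\mathcal{O}_{\mathbb{P}W}(-1)$ restricted to $\mathbb{P}W \setminus \underline{\Delta}$.

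Next I would invoke the standard formula: for a $G$-equivariant line bundle $\mathcal{L}$ on a $G$-variety $Y$, the complement $\mathcal{L}^\times$ of the zero section satisfies $\CH^*_G(\mathcal{L}^\times) \cong \CH^*_G(Y)/(c_1^G(\mathcal{L}))$, which is a direct consequence of the excision sequence for the zero section inside $\mathcal{L}$ combined with homotopy invariance of equivariant Chow groups \cite{EG}. Applied with $\mathcal{L} = \mathcal{O}_{\mathbb{P}W}(-1)$ and $Y = \mathbb{P}W \setminus \underline{\Delta}$, this gives
$$\CH^*_G(W \setminus \Delta) \cong \CH^*_G(\mathbb{P}W \setminus \underline{\Delta}) \big/ \bigl(c_1^G(\mathcal{O}_{\mathbb{P}W}(-1))\bigr).$$

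It remains to identify $c_1^G(\mathcal{O}_{\mathbb{P}W}(-1))$ with $-h + \beta_1 + \gamma$. Twisting a vector space by a $G$-character does not alter the underlying projectivization or the induced $G$-action, so there is a canonical isomorphism of $G$-schemes $\mathbb{P}W \cong \mathbb{P}\mathrm{Sym}^4(V^\vee)$ under which a line $\ell' \subset \mathrm{Sym}^4(V^\vee)$ corresponds to $\ell' \otimes \det(V) \otimes \Gamma \subset W$. Reading off fibers then yields a canonical $G$-equivariant isomorphism of line bundles
$$\mathcal{O}_{\mathbb{P}W}(-1) \;\cong\; \mathcal{O}_{\mathbb{P}\mathrm{Sym}^4(V^\vee)}(-1) \otimes \det(V) \otimes \Gamma,$$
and taking first $G$-equivariant Chern classes (using $c_1^G(V) = \beta_1$ since $V$ has rank $2$) gives exactly $-h + \beta_1 + \gamma$, which is the relation in the statement. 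I do not foresee any serious obstacle: the argument is a routine application of the $\mathbb{G}_m$-torsor formula, and the only care required is tracking the sign convention for $\mathcal{O}(-1)$ versus $\mathcal{O}(1)$ and the twist by $\det(V) \otimes \Gamma$ when passing between the two projectivizations.
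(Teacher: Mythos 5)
Your proposal is correct and follows essentially the same route as the paper, which simply observes that $p$ is a $\mathbb{G}_m$-torsor with associated line bundle $\mathcal{O}_{\Pp \mathrm{Sym}^4(V^\vee)}(-1) \otimes \det(V) \otimes \Gamma$ and invokes the standard quotient-by-$c_1$ formula. Your identification of the twist and the resulting class $-h+\beta_1+\gamma$ matches the paper's (unwritten) details exactly.
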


\begin{proof}
    The morphism $p$ is a $\mathbb{G}_m$-torsor with associated line bundle $\O_{\Pp \mathrm{Sym}^4 (V^\vee)}(-1) \otimes \mathrm{det}(V) \otimes \Gamma $.
\end{proof}

Next, we compute $\CH_G^*( \Pp \mathrm{Sym}^4 (V^\vee)\smallsetminus \underline{\Delta} $ using the excision sequence:

\begin{equation}\label{eqn: excision sequence}
    \CH_{*}^G(\underline{\Delta}) \to \CH_*^G ( \Pp \mathrm{Sym}^4 (V^\vee)) \to \CH^G_*(\Pp \mathrm{Sym}^4 (V^\vee)  \smallsetminus \underline{\Delta}) \to 0
\end{equation}

In order to do so, we will need a $G$-equivariant envelope of $\underline{\Delta}$. 

\subsection{A \texorpdfstring{$G$}{G}-equivariant envelope for \texorpdfstring{$\underline{\Delta}$}{delta}}\label{sec: envelope}
We first recall the definition.

\begin{definition}\cite[Definition 18.3]{Fu} and \cite[page 9]{EG}
    An envelope of a scheme $X$ is a proper morphism $p : X' \to X$
    such that for every subvariety \footnote{By variety we mean an integral scheme. A subvariety of a scheme is a closed subscheme which is a variety.} $Y$ of $X$, there is a subvariety $Y'$ of $X'$ such that $p$ maps $Y'$ birationally onto $Y$. 
    
    Suppose that $G$ is a linear algebraic group acting on $X$ and $X'$. We will say that $p : X' \to X$ is a $G$-equivariant envelope, if $p$ is $G$-equivariant, proper and if we can take $Y'$ to be $G$-invariant for $G$-invariant $Y$.  
\end{definition}

The importance of $G$-envelope is explained by the next lemma.

\begin{lemma}
    Let $p:X' \to X$ be a $G$-equivariant envelope. Then the pushforward
    $$
    p_*: \CH_*^G(X') \to \CH_*^G(X)
    $$
    is surjective.
\end{lemma}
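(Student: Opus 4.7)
The plan is to reduce the statement to the classical non-equivariant envelope surjectivity \cite[Lemma 18.3]{Fu} by means of the Edidin--Graham mixed-space construction of equivariant Chow groups. For the fixed degree $i$ of interest, I would choose a good pair $(V,U)$, i.e.\ a $G$-representation $V$ together with a $G$-invariant open subset $U\subseteq V$ on which $G$ acts freely and such that $V\smallsetminus U$ has codimension strictly greater than $\dim X - i$; then $\CH^G_i(X)=\CH_{i+\dim V-\dim G}(X\times^G U)$ and likewise for $X'$, and the equivariant pushforward $p_*$ is identified with the ordinary pushforward along the proper morphism of schemes
$$p_U:=p\times^G\mathrm{id}_U\colon X'\times^G U\longrightarrow X\times^G U.$$
It thus suffices to show that $p_U$ is itself a (non-equivariant) envelope.

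To verify the envelope property for $p_U$, I would take an irreducible subvariety $Z\subseteq X\times^G U$ and pull it back along the quotient $G$-torsor $\pi\colon X\times U\to X\times^G U$ to obtain a $G$-invariant closed subscheme $\tilde Z=\pi^{-1}(Z)\subseteq X\times U$. Let $Y\subseteq X$ denote the closure of the image of $\tilde Z$ under the projection to $X$; it is $G$-invariant. Using the $G$-equivariant envelope hypothesis applied (orbit-by-orbit) to the irreducible components of $Y$, one obtains a $G$-invariant closed subscheme $Y'\subseteq X'$ mapping properly and birationally onto $Y$. Passing to mixed spaces preserves both properness and birationality because $\pi$ is a flat $G$-torsor with equidimensional fibers, so the induced map $Y'\times^G U\to Y\times^G U$ is still proper and birational. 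Taking the strict transform of $Z$ inside $Y'\times^G U\subseteq X'\times^G U$ then produces an irreducible subvariety $Z'$ mapping properly and birationally onto $Z$ via $p_U$, which establishes the envelope property and hence the surjectivity of $p_*$ via \cite[Lemma 18.3]{Fu}.

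The main obstacle will be the bookkeeping forced by the possible disconnectedness of $G$---which is relevant in our paper since $G=(\mathbb{G}_m\times\mathbb{G}_m)\rtimes\Z/2\Z$ is itself disconnected. When $G$ is not connected, the preimage $\tilde Z$ of an irreducible $Z$ can split into several irreducible components that are permuted by $\pi_0(G)$, the subvariety $Y\subseteq X$ can be reducible, and the equivariant envelope hypothesis---which is phrased for irreducible $G$-invariant subvarieties---must be applied to each $\pi_0(G)$-orbit of components separately, after which the individual lifts need to be reassembled into a single $G$-invariant $Y'\subseteq X'$ via the $G$-action. Once this combinatorial step is handled, all the remaining ingredients (flat pullback, strict transform, descent through a flat torsor) are formal, and the surjectivity of $p_*$ on equivariant Chow follows.
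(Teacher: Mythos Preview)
Your approach is correct and matches the paper's, which simply cites \cite[Lemma 3]{EG} (the mixed space of a $G$-equivariant envelope is an ordinary envelope) together with \cite[Lemma 18.3.(6)]{Fu}. Your sketch is essentially an outline of the proof of \cite[Lemma 3]{EG}, including the bookkeeping for disconnected $G$.
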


\begin{proof}
    This follows from \cite[Lemma 3]{EG} and \cite[Lemma 18.3.(6)]{Fu}.
\end{proof}

Finally, the next lemma explains how to  construct a $G$-equivariant envelope.

\begin{lemma}\label{lemma: criterion for envelope}
    Let $X$ be a scheme and $G$ a linear algebraic group acting on $X$. Given $G$-equivariant proper and surjective morphisms $p_i: X_i' \twoheadrightarrow X_i$ for $i=1,\ldots,d$ such that:
\begin{enumerate}
    \item[(i)] $X_1=X;$
    \item[(ii)] $X_{i+1} \subseteq X_{i}$ for $i=1, \ldots,d-1$;
    \item[(iii)] if $K \supseteq k$ is any extension and $x$ is a $K$-valued point of $X_i \smallsetminus X_{i+1}$,  then there is a unique $K$-valued point $y$ of $X_i'$ mapping to $x$.
\end{enumerate}
then the map $p= \sqcup p_i: \sqcup_{i=1}^d X_{i}' \to X $ is a $G$-equivariant envelope. 
\end{lemma}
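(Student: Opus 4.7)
The plan is to verify the two defining properties of a $G$-equivariant envelope directly. First note that $p := \bigsqcup_{i=1}^d p_i : \bigsqcup_{i=1}^d X_i' \to X$ is $G$-equivariant because each $p_i$ is, and proper as a finite disjoint union of proper morphisms. Hence it suffices to prove that every $G$-invariant subvariety $Y \subseteq X$ admits a $G$-invariant subvariety $Y' \subseteq \bigsqcup_{i=1}^d X_i'$ such that $p$ maps $Y'$ birationally onto $Y$.

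To locate $Y$ in the filtration, I would set $i(Y) := \max \{ j : Y \subseteq X_j \}$ (using the convention $X_{d+1} = \emptyset$), and write $i = i(Y)$ for brevity. By the maximality and (ii), the generic point $\eta_Y$ of $Y$ lies in $U_i := X_i \smallsetminus X_{i+1}$, and $Y \cap U_i$ is a dense open subset of $Y$. Consider $V := p_i^{-1}(Y \cap U_i) \subseteq X_i'$; it is $G$-invariant, since it is the preimage under the $G$-equivariant $p_i$ of the $G$-invariant open set $Y \cap U_i$. By hypothesis (iii), $V \to Y \cap U_i$ is a bijection on $k$-points, so the induced proper morphism $V_{\mathrm{red}} \to (Y \cap U_i)_{\mathrm{red}}$ is a homeomorphism of underlying topological spaces; since the target is irreducible, so is $V_{\mathrm{red}}$. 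I would then take $Y'$ to be the closure of $V_{\mathrm{red}}$ in $X_i'$, endowed with its reduced-induced structure. Then $Y'$ is a subvariety of $X_i'$, it is $G$-invariant (the closure of any $G$-invariant subset of a $G$-scheme is $G$-invariant), and $p_i(Y') = Y$ because this image is closed by properness of $p_i$ and contains the dense subset $Y \cap U_i$.

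It remains to argue that the induced proper surjection $p_i|_{Y'} : Y' \to Y$ is birational, and this is the step I expect to require the most care. Condition (iii) forces the set-theoretic fibre of $p_i|_{Y'}$ over the generic point $\eta_Y$ to consist of a single point, and the substantive content is to upgrade this to the statement that the residue field extension $k(\eta_Y) \hookrightarrow k(\eta_{Y'})$ is an isomorphism, i.e.\ that $p_i|_{Y'}$ has degree one. Over a characteristic-zero field this follows immediately from the fact that a proper radicial morphism is an isomorphism onto its image; in positive characteristic one must interpret (iii) scheme-theoretically (so that the fibres $p_i^{-1}(x)$ for $x \in U_i(k)$ are reduced of length one) in order to rule out purely inseparable extensions. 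Once this birationality is in place, the subvariety $Y' \hookrightarrow X_i' \hookrightarrow \bigsqcup_{j=1}^d X_j'$ witnesses the $G$-equivariant envelope property for $Y$, completing the proof.
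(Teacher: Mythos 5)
The paper itself gives no argument here---it defers entirely to \cite[Proof of Proposition 4.1]{EF}---and your write-up is essentially that standard argument: locate the stratum $U_i = X_i \smallsetminus X_{i+1}$ containing the generic point of a $G$-invariant subvariety $Y$, pull back the dense open $Y \cap U_i$, and take the closure of (the reduction of) the preimage to get $Y'$. The construction of $Y'$, its $G$-invariance, and the surjectivity of $Y' \to Y$ are all correct as you present them. The point you flag at the end is the right one to worry about, and it is in fact substantive: condition (iii) as literally written (a bijection on $k$-points over $U_i$) does \emph{not} imply birationality in positive characteristic---the Frobenius $\mathbb{A}^1 \to \mathbb{A}^1$ satisfies (i)--(iii) with $d=1$ and trivial $G$ but is not an envelope---so the lemma needs (iii) to be read scheme-theoretically (reduced length-one fibres over $U_i$, equivalently $p_i$ unramified, hence an isomorphism onto its image, over $U_i$ after passing to reductions), or else a characteristic-zero hypothesis. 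Since the paper allows any characteristic other than $2$ and $3$, this strengthening is genuinely needed; it does hold in the application, because the verification of (iii) in Proposition \ref{prop: p is envelope} secretly uses it (the step ``since $\mathrm{char}(k) \nmid \deg(G)$, the polynomial $G$ is not a square'' is exactly what makes the squaring and multiplication maps unramified at the relevant points). So your proof is complete once (iii) is interpreted in this stronger sense, and you have correctly identified the one place where the written statement is looser than what the argument requires.
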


\begin{proof}
    See \cite[Proof of Proposition 4.1]{EF}. 
\end{proof}

In our situation, we take $p$ to be the disjoint union of the following maps $p_{i}$ for $i=1, \ldots,4$.

We let 
$$
p_{1,1}: \Pp V^\vee \times \Pp\mathrm{Sym}^2(V^\vee) \to \ \underline{\Delta} 
$$
defined by $ (F,G) \mapsto F^2 G $ and 
$$
p_{1,2}: \Pp \mathrm{Sym}^3(V^\vee)\times \{X,Y\} \subset \Pp \Sym^3 (V^\vee) \times \Pp V^\vee \to \ \underline{\Delta}  
$$
defined by $(F,G) \mapsto FG$, where $G \in \{X,Y\}$. Then, we set $p_1= p_{1,1} \sqcup p_{1,2}$ and $\underline{\Delta}_1 = \underline{\Delta}$ to be the image of $p_1$.

Let 
$$
p_{2,1}: \Pp \mathrm{Sym}^2(V^\vee) \to \ \underline{\Delta} 
$$
be defined by $ F \mapsto F^2$,
$$
p_{2,2}: \Pp V^\vee  \times \Pp V^\vee  \times \{X,Y\} \subset (\Pp V^\vee)^{\times 3}\to \ \underline{\Delta}
$$
defined by $(F_1,F_2,G) \mapsto F_1^2 F_2 G$, where $G \in \{X,Y\}$ and
$$
p_{2,3}: \Pp \mathrm{Sym}^2(V^\vee) \times \{X^2, Y^2, XY\} \subset (\Pp \Sym^2(V^\vee))^{\times 2}\to \ \underline{\Delta} 
$$
be defined by $(F,G) \mapsto FG$, where $G \in \{X^2, Y^2, XY\}$. We set $p_2=p_{2,1} \sqcup p_{2,2} \sqcup p_{2,3}$ and $\underline{\Delta}_2 \subseteq \underline{\Delta}_1$ to be the image of $p_2$.

Define
$$
p_{3,1}: \Pp V^\vee\times  \{X^3, Y^3, X^2Y, XY^2\} \subset \Pp V^\vee \times \Pp\Sym^3(V^\vee) \to \underline{\Delta}
$$
by $(F,G) \mapsto FG$, where $G \in \{X^3, Y^3, X^2Y, XY^2\}$, and
$$
p_{3,2}: \Pp V^\vee \times \{X^2, Y^2, XY\}\subset \Pp V^\vee \times \Pp \Sym^2(V^\vee) \to \underline{\Delta}
$$
by $(F,G) \mapsto F^2G$, where $G \in \{X^2, Y^2, XY\}$. We set $p_3=p_{3,1} \sqcup p_{3,2}$ and $\underline{\Delta}_3 \subseteq \underline{\Delta}_2$ to be the image of $p_3$.

Finally, we set 
$$
p_4: \{X^4, Y^4,X^3Y,XY^3,X^2Y^2\} \to \underline{\Delta}
$$
to be the inclusion, and $\underline{\Delta}_4$ will be its image. 

Let $\underline{\Delta}'$ be the disjoint union of the domains of $p_1, p_2, p_3$ and $p_4$.
\begin{proposition}\label{prop: p is envelope}
    The morphism 
    $$
    p=p_1 \sqcup p_2 \sqcup p_3 \sqcup p_4 : \underline{\Delta}' \longrightarrow \underline{\Delta}
    $$
    is a $G$-equivariant envelope.
\end{proposition}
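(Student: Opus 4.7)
The plan is to apply Lemma \ref{lemma: criterion for envelope} with $X = \underline{\Delta}$, stratification $\underline{\Delta} = \underline{\Delta}_1 \supseteq \underline{\Delta}_2 \supseteq \underline{\Delta}_3 \supseteq \underline{\Delta}_4$, and morphisms $p_i \colon \underline{\Delta}_i' \to \underline{\Delta}_i$. Three things need checking: each $p_i$ is $G$-equivariant, proper and surjective onto $\underline{\Delta}_i$; the containments $\underline{\Delta}_{i+1} \subseteq \underline{\Delta}_i$ hold; and the unique-preimage condition (iii) of the lemma is satisfied over each $\underline{\Delta}_i \smallsetminus \underline{\Delta}_{i+1}$.

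The first two conditions are routine. Properness is immediate since every source is projective. Each component $p_{i,j}$ is built from the $\mathrm{GL}_2$-equivariant multiplication map $\mathrm{Sym}^a(V^\vee)\otimes\mathrm{Sym}^b(V^\vee) \to \mathrm{Sym}^{a+b}(V^\vee)$ composed with the inclusion of a $G$-stable subset of monomials: the finite sets $\{X,Y\}$, $\{X^2, Y^2, XY\}$, $\{X^3, Y^3, X^2Y, XY^2\}$ and $\{X^4, Y^4, X^3Y, XY^3, X^2Y^2\}$ are preserved by the $\mathbb{G}_m\times \mathbb{G}_m$-scaling and by the involution swapping $X$ and $Y$, so every $p_i$ is $G$-equivariant. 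The inclusions $\underline{\Delta}_{i+1}\subseteq \underline{\Delta}_i$ are simple rewritings: for instance $F_2^2 = L^2 \cdot (LM)$ for any linear factorization $F_2 = LM$ shows $\mathrm{Im}(p_{2,1}) \subseteq \mathrm{Im}(p_{1,1})$, and $F\cdot X^3 = (FX)\cdot X^2$ shows $\mathrm{Im}(p_{3,1})\subseteq \mathrm{Im}(p_{2,3})$.

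The substantive step is (iii). For $i=1$ the complement $\underline{\Delta}_1 \smallsetminus \underline{\Delta}_2$ splits into two disjoint loci: (a) quartics of the form $L^2 MN$ with $L, M, N$ pairwise non-proportional linear forms, none proportional to $X$ or $Y$; and (b) quartics $L\cdot C$ with $L \in \{X,Y\}$ and $C$ a cubic with three distinct roots, none at $0$ or $\infty$. A polynomial of type (a) has a unique double root and no root at $0$ or $\infty$, so its only preimage in $\mathbb{P}V^\vee \times \mathbb{P}\mathrm{Sym}^2(V^\vee)$ is the pair $(L, MN)$ and it has no preimage in $\mathbb{P}\mathrm{Sym}^3(V^\vee)\times \{X,Y\}$; a polynomial of type (b) has exactly one root at $0$ or at $\infty$ and no double root, so its only preimage is $(C, L)$. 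The same approach handles $i=2$ and $i=3$: one stratifies by the multiplicity pattern $(\mathrm{mult}_0, \mathrm{mult}_\infty, \text{multiplicities at other roots})$ of the quartic, identifies the cell of each pattern in $\underline{\Delta}_i \smallsetminus \underline{\Delta}_{i+1}$, and verifies that each pattern is realized by exactly one component $p_{i,j}$, with the factors determined by the root structure.

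The main obstacle is not conceptual but combinatorial: for $i=2$ and $i=3$ one must write out the list of multiplicity patterns that constitute $\underline{\Delta}_i \smallsetminus \underline{\Delta}_{i+1}$ and confirm both that no polynomial is missed and that no pattern lies in the image of two distinct components $p_{i,j}$. Once this bookkeeping is done, each single-preimage verification is a one-line check using unique factorization of quartics in $k[X,Y]$.
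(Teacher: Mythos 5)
Your overall strategy is the same as the paper's: apply Lemma \ref{lemma: criterion for envelope}, note that equivariance, properness and the inclusions $\underline{\Delta}_{i+1}\subseteq\underline{\Delta}_i$ are routine, and verify condition (iii) by a case analysis on root multiplicities at $0$, $\infty$ and elsewhere, carried out explicitly only for $i=1$. However, the one case you do work out has a gap: your claimed decomposition of $\underline{\Delta}_1\smallsetminus\underline{\Delta}_2$ into the two loci (a) and (b) is not exhaustive. A quartic $L^3N$ with $L\not\sim N$ and neither proportional to $X$ nor $Y$ lies in $\underline{\Delta}_1$ (it has a multiple root) but in none of the images of $p_{2,1}$, $p_{2,2}$, $p_{2,3}$ (it is not a perfect square, since its multiplicities are $3$ and $1$, and it has no root at $0$ or $\infty$), so it lies in $\underline{\Delta}_1\smallsetminus\underline{\Delta}_2$; yet it is neither of the form $L^2MN$ with $L,M,N$ pairwise non-proportional nor of type (b). Condition (iii) does still hold on this stratum --- the unique linear form whose square divides $L^3N$ is $L$, so $(L,LN)$ is the unique preimage under $p_{1,1}$, and there is no preimage under $p_{1,2}$ --- so the proposition is unharmed, but the asserted ``splits into two disjoint loci'' is false as stated, and the omission matters because the entire content of the proof is precisely this enumeration. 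The paper avoids the issue by arguing directly: for $f$ with $n_0+n_\infty=0$, any factorization $f=F^2G$ forces $F$ to be the unique multiple root and $G=f/F^2$, with no assumption that $F\nmid G$.

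A second, smaller point: for $i=2$ and $i=3$ you only assert that the bookkeeping can be done. That matches the level of detail in the paper (which treats $i=1$ and declares the other cases similar), but since your explicit $i=1$ list already dropped a stratum, you should write out the multiplicity patterns for at least one further value of $i$ to make the deferred claim credible.
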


\begin{proof}
    We will check the conditions of Lemma \ref{lemma: criterion for envelope}. Condition $(i)$ and $(ii)$ are obvious. We will check that condition $(iii)$ holds for $i=1$, the other cases are similar. Let $f \in K[X,Y]$ be a $K$ valued point of $\underline{\Delta}_1 \smallsetminus \underline{\Delta}_2$ and let $n_0$ and $n_\infty$ be respectively the multiplicities of $0$ and $\infty$ as roots of $f \in K[X,Y]$. Clearly $n_0 + n_\infty \leq 1$ otherwise $f$ would be in $\underline{\Delta}_2$. 
    
    If $n_0 + n_\infty = 1$, then $f$ cannot be in the image of $p_{1,1}$ and must be in the image of $p_{1,2}$. Suppose $n_0=1$ and $n_\infty=0$. Then, we can write $f=X F$ for a unique $F \in K[X,Y]_3$ (up to $K^*$) and $f=p_{1,2}((F,X))$. 

    If $n_0+n_\infty=0$, then clearly $f$ is not in the image of $p_{1,2}$ and must be in the image of $p_{1,1}$. In fact, by definition, $f$ must have a double root in some extension of $K$, and thus also in $K$ as by assumption $\mathrm{char}(k)$ is bigger than the degree of $f$. Write $f=F^2 G$ where $F \in K[X,Y]_1$ and $G \in K[X,Y]_2$. Then, since $f \notin \underline{\Delta}_2$, the polynomial $G$ is not a square (in any extension of $K$). Thus $(F,G)$ is uniquely determined.
\end{proof}

\subsection{Review of multiplication maps}
In order to compute the pushforward along the morphism $p$ we will make use of the machinery developed in \cite[Section 4]{Eric}. We recall here the basic definitions and results.

Let $E$ be a vector bundle of rank $2$ over an algebraic stack $\mathcal{X}$ with Chern classes $c_1, c_2 \in \CH^*(\mathcal{X})$. Denote by $s_r^j$ the pushforward of
$$
\underbrace{c_1(\mathcal O_{\Pp E}(1)) \otimes \ldots \otimes c_1(\mathcal O_{\Pp E} (1))}_{j \text{ times}} \otimes 1
$$
along the multiplication map
$$
(\Pp E)^j \times_\mathcal{X} \Pp \Sym^{r-j} (E) \to \Pp \Sym^r (E).
$$
 If $h = c_1(\mathcal O_{\Pp \Sym^r (E)}(1))$, then $s_r^0=1$, $s_r^1 = h$ and, more generally, for all $j \ge 0$ the following relation holds:
$$
s_r^{j+1} = (h + jc_1)s_r^{j} + j(r+1-j)c_2 s^{j-1}_r.
$$
So one obtains:
\begin{align}
\begin{split}\label{eqn: h^2}
    h^2 =\,&s^2_r -c_1 s^1_r - rc_2
\end{split}\\
\begin{split}\label{eqn: h^3}
    h^3 =\,&s_r^3 -3c_1 s_r^2 + (c_1^2 +(2-3r)c_2)s^1_r + rc_1c_2
\end{split}\\
\begin{split}\label{eqn: h^4}
    h^4 =\,& s_r^4 -6c_1s^3_r + (7c_1^2 -(6r-8)c_2)s_r^2 + ((10r-8)c_1c_2 -c_1^3)s_r^1 \\
    &-(rc_1^2c_2-(3r^2-2r)c_2^2)s_r^0 
\end{split}
\end{align}
where $r \geq 2$, $3$ and $4$ in \eqref{eqn: h^2}, \eqref{eqn: h^3} and \eqref{eqn: h^4} respectively. Moreover, the pushforward along multiplication maps
$$
\mathrm{mult}: \Pp\Sym^a (E) \times_\mathcal{X} \Pp \Sym^b (E) {\longrightarrow} \Pp \Sym^{a+b} (E)
$$
is obtained from the bilinear map
\begin{equation}\label{eqn: multiplication maps}
\begin{alignedat}{3}
    \mathrm{mult}_* \colon &&   \CH_*(\Pp \Sym^a(E)) \times \CH_*(\Pp \Sym^b(E)) &   \to{}   && \CH_*(\Pp \Sym^{a+b} (E)) \\
             && (s_a^\alpha, s_b^\beta) & \mapsto{} && \binom{a+b-\alpha -\beta}{a-\alpha}s_{a+b}^{\alpha +\beta}
\end{alignedat}
\end{equation}

The maps $p_{1,1}$ and $p_{2,2}$ involve squaring, so we will also need the class of the diagonal. 

\begin{lemma}\label{lem: the diagonal of Psym}
    The pushforward along the squaring map
    $$
    \mathrm{sq}: \Pp  E \longrightarrow \Pp \Sym^2 (E)
    $$
    is given by
    \begin{align*}
        s_1^0 & \mapsto 2s^1_2 + 2c_1\\
        s_1^1 &\mapsto  s_2^2-2c_2.
    \end{align*}
    The pushforward along the squaring map
    $$
    \mathrm{sq}: \Pp \Sym^2 (E) \longrightarrow \Pp \Sym^4 (E)
    $$
    is given by
    \begin{align*}
        s_2^0 & \mapsto  4s_4^2 + 12c_1s_4^1 + 12c_1^2  \\
        s_2^1 &\mapsto  2s_4^3 +2c_1 s_4^2 -12c_2 s_4^1-24 c_1c_2\\
        s_2^2 &\mapsto s_4^4  -4c_2s_4^2+24c_2^2 .
    \end{align*}
\end{lemma}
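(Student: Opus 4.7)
The plan is to factor each squaring map as the composition of a diagonal with a multiplication map: for $r=1,2$,
$$\mathrm{sq}\colon \Pp \Sym^r(E) \xrightarrow{\Delta} \Pp \Sym^r(E) \times_{\mathcal{X}} \Pp \Sym^r(E) \xrightarrow{\mathrm{mult}} \Pp \Sym^{2r}(E),$$
so $\mathrm{sq}_* = \mathrm{mult}_* \circ \Delta_*$. The pushforward along $\mathrm{mult}$ is recorded in \eqref{eqn: multiplication maps}, so the problem reduces to computing $\Delta_*$. Writing $s_{r,i}^\alpha$ for the class $s_r^\alpha$ pulled back from the $i$-th factor, we have $\Delta^* s_{r,1}^\alpha = s_r^\alpha$, and the projection formula yields $\Delta_*(s_r^\alpha) = s_{r,1}^\alpha \cdot [\Delta]$. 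It is therefore enough to compute the class of the diagonal.

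To find $[\Delta]$ for a projective bundle $\Pp F \to \mathcal{X}$, one observes that the diagonal is the zero locus of the regular section of $\pi_1^* \O_{\Pp F}(1) \otimes \pi_2^* Q_F$ (of rank $\mathrm{rk}(F)-1$) obtained from the composition $\pi_1^* \O_{\Pp F}(-1) \hookrightarrow \pi_1^* F = \pi_2^* F \twoheadrightarrow \pi_2^* Q_F$, where $Q_F$ is the universal quotient on $\Pp F$. Hence
$$[\Delta] = c_{\mathrm{rk}(F)-1}\bigl(\pi_1^* \O_{\Pp F}(1) \otimes \pi_2^* Q_F \bigr),$$
which expands using the tensor-product Chern class formula and the identity $c(Q_F) = c(F)(1 + h + h^2 + \cdots)$. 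Applied to $F = E$ this gives $[\Delta] = h_1 + h_2 + c_1$. Applied to $F = \Sym^2(E)$, using the splitting principle to obtain $c_1(\Sym^2 E) = 3c_1$ and $c_2(\Sym^2 E) = 2c_1^2 + 4c_2$, it gives
$$[\Delta] = h_1^2 + h_2^2 + h_1 h_2 + 3c_1(h_1 + h_2) + 2c_1^2 + 4c_2.$$

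The remaining step is to expand each product $s_{r,1}^\alpha \cdot [\Delta]$ in the basis $\{s_{r,1}^\beta s_{r,2}^\gamma\}$ over $\CH^*(\mathcal{X})$ and then apply \eqref{eqn: multiplication maps} term by term. For $r = 1$ only the relation $h^2 = -c_1 h - c_2$ in $\Pp E$ is needed and the formulas $\mathrm{sq}_*(s_1^0) = 2s_2^1 + 2c_1$ and $\mathrm{sq}_*(s_1^1) = s_2^2 - 2c_2$ drop out immediately. For $r = 2$ one has to reduce products such as $s_{2,1}^1 \cdot s_{2,1}^2$ and $(s_{2,1}^2)^2$ using the recursion $s_r^{j+1} = (h + jc_1)s_r^j + j(r+1-j) c_2 s_r^{j-1}$ underlying \eqref{eqn: h^2}--\eqref{eqn: h^4} — equivalently the projective bundle relation $h^3 + 3c_1 h^2 + (2c_1^2 + 4c_2) h + 4c_1 c_2 = 0$ on $\Pp \Sym^2 E$, i.e.\ the vanishing $s_2^3 = 0$. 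This is the main obstacle: the bookkeeping is purely mechanical but lengthy, and the extensive cancellations it produces (in particular those responsible for the absence of an $s_4^3$ term in $\mathrm{sq}_*(s_2^2)$) are what leave the final formulas in the clean form stated.
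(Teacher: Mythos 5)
Your proposal is correct and follows essentially the same route as the paper's proof: factor $\mathrm{sq}$ as $\mathrm{mult}\circ\Delta$, realize the diagonal of $\Pp W\times_{\mathcal X}\Pp W$ as the zero locus of the regular section $p_1^*\mathcal O(-1)\to p_2^*Q$ so that $[\Delta]=\left[\frac{c(W)}{(1-h_1)(1-h_2)}\right]_{\mathrm{rk}(W)-1}$, and then combine the projection formula with the multiplication pushforward \eqref{eqn: multiplication maps}. The only (immaterial) difference is that you push forward the $s_r^\alpha$ directly and reduce via $s_2^3=0$, whereas the paper pushes forward powers of $h$ and converts at the end using \eqref{eqn: h^2}.
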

\begin{proof}
    Let $W$ denote the vector bundle $E$ or $\mathrm{Sym}^2 (E)$, and consider the tautological sequence
    $$
    0 \longrightarrow \mathcal O(-1) \longrightarrow W\longrightarrow Q\longrightarrow 0
    $$
    on $\Pp W$. The diagonal $\Delta_{\Pp W} \subset \Pp W \times_\mathcal{X} \Pp W$ is the vanishing locus of the map
    $$
    p_1^* \mathcal O(-1) \longrightarrow p_1^* W = p_2^* W \longrightarrow p_2^*Q,
    $$
    where $p_1, p_2 : \Pp W \times_\mathcal{X} \Pp W \to \Pp W$ are the projections. Therefore, its class is given by
    $$
    [\Delta_{\Pp W}] = \left[\frac{c(W)}{(1-h_1)(1-h_2)}\right]_{\mathrm{rank}(W)-1},
    $$
    where $h_i=p_i^*h$ for $i=1,2$, $h=c_1(\mathcal{O}_{\Pp W)}(1))$. Moreover, $\Delta_{\Pp W}^*(h_1)$ is again the hyperplane class, so by the projection formula,
    $$
    \Delta_{\Pp W,*}(h^k) = h_1^k \cdot[\Delta_{\Pp W}].
    $$
    If $W = E$, then $c(W) = 1+ c_1 + c_2$, so $[\Delta_{\Pp E}] = h_1+h_2 +c_1$ and
    \begin{align*}
        \Delta_{\Pp E, *} (1) &= h_1 + h_2 + c_1\\
        \Delta_{\Pp E ,*}(h) &= h_1^2 + h_1 \otimes h_2 + c_1 h_1 =h_1 \otimes h_2 -c_2
    \end{align*}
    where in the last equality we have used the projective bundle formula. If $W = \Sym^2 (E)$, $c(W) = 1+ 3c_1 + 2c_1^2 + 4c_2 + 4c_1c_2$, so
    \begin{align*}
        \Delta_{\Pp \Sym^2 (E), *} (1) &= h_1^2 + h_1 \otimes h_2 + h_2^2 + 3c_1(h_1 + h_2) + 2c_1^2 + 4c_2\\
        \Delta_{\Pp \Sym^2 (E) ,*}(h) &= h_1^3 + h_1^2 \otimes h_2 + h_1 \otimes h_2^2 + 3c_1h_1 \otimes h_2 + 3c_1 h_1^2 + (2c_1^2 + 4c_2)h_1\\
        &= h_1^2 \otimes h_2 + h_1 \otimes h_2^2 + 3c_1h_1 \otimes h_2 -4c_1c_2\\    \Delta_{\Pp \Sym^2 (E) ,*}(h^2) &=h_1^3 \otimes h_2 + h_1^2 \otimes h_2^2 + 3c_1h_1^2 \otimes h_2  -4c_1c_2 h_1\\
        &= h_1^2 \otimes h_2^2 -(2c_1^2+4c_2)h_1\otimes h_2 - 4c_1c_2(h_1 + h_2).
    \end{align*}
    where again we have used the projective bundle formula. The squaring maps are the composition $\mathrm{mult}\circ \Delta_{\Pp W}$, so the result follows from formula \eqref{eqn: multiplication maps}, after using \eqref{eqn: h^2}.
\end{proof}

For us, $E$ will be $V_G^\vee$, and $X$ will be $BG$, so $c_1 = -\beta_1$ and $c_2 = \beta_2$.

\subsection{The classes of the finite subsets}

Almost all the maps $p_{ij}$ in \S\ref{sec: envelope} are obtained as a composition
$$
\Pp \Sym^a (V_G^\vee) \times Z \hookrightarrow \Pp \Sym^a (V_G^\vee)  \times \Pp \Sym^b (V_G^\vee) \stackrel{\mathrm{mult}}{\longrightarrow} \Pp \Sym^{a+b} (V_G^\vee)
$$
for some substack $Z \subseteq  \Sym^b (V_G^\vee)$ corresponding to a finite set of points in $V^\vee$. Here we calculate the equivariant fundamental classes of such finite subsets.
\begin{lemma}
    The class of $\{X, Y\} \subset \Pp V_G^\vee$ is $2s_1^1 - (\beta_1 + \gamma)s_1^0$.
\end{lemma}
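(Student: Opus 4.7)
The plan is to realise $\{X, Y\} \subset \Pp V_G^\vee$ as the zero locus of a canonical $G$-invariant section of a $G$-equivariant line bundle on $\Pp V_G^\vee$, and then read off its first equivariant Chern class. Since $\{X,Y\}$ is a degree-$2$ subscheme of $\Pp^1$ non-equivariantly, the line bundle is necessarily $\mathcal{O}(2)$ twisted by some character of $G$, and the content of the lemma is just the identification of that character.

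With the convention fixed in the proof of Lemma \ref{lem: the diagonal of Psym} that $\mathcal{O}(-1) \hookrightarrow V_G^\vee$ on $\Pp V_G^\vee$, one has $H^0(\Pp V_G^\vee, \mathcal{O}(n)) = \mathrm{Sym}^n V$. Let $e_1, e_2 \in V$ and $X, Y \in V^\vee$ be the dual bases from Definition \ref{notation: representations}. The natural candidate is the quadric $e_1 e_2 \in \mathrm{Sym}^2 V$: its value on a line $\langle \alpha X + \beta Y \rangle \subset V^\vee$ is $\alpha \beta$, so it cuts out exactly $\{\langle X\rangle, \langle Y\rangle\} = \{X, Y\}$ as a reduced, $G$-invariant effective Cartier divisor.

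It remains to determine the character by which $e_1 e_2$ transforms. Using Notation \ref{Notation}, $(a,b;0)$ acts on $V$ as $\mathrm{diag}(a,b)$ and $(a,b;1)$ swaps $e_1 \leftrightarrow e_2$ with scalings $b,a$ respectively. A direct calculation then gives
$$
g \cdot (e_1 e_2) = ab \cdot (e_1 e_2)
$$
for every $g=(a,b;\epsilon) \in G$, irrespective of $\epsilon$. Comparing with $\det V$ (which is $ab$ on the torus component and $-ab$ on the non-trivial component) and $\Gamma$ (trivial on the torus, $-1$ on the non-trivial component), this character is exactly $\det V \otimes \Gamma$.

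Therefore $e_1 e_2$ is a $G$-invariant section of $\mathcal{O}_{\Pp V_G^\vee}(2) \otimes (\det V_G \otimes \Gamma_G)^{-1}$, whose zero scheme is $\{X,Y\}$. Its equivariant fundamental class is
$$
[\{X, Y\}] = 2h - \beta_1 - \gamma = 2s_1^1 - (\beta_1 + \gamma) s_1^0,
$$
where in the last step we used $s_1^0 = 1$, $s_1^1 = h$, and $2\gamma=0$ in $\CH^*(BG)$ to absorb the sign on $\gamma$. The main obstacle is purely bookkeeping: keeping track of which of $V, V^\vee$ carries the chosen basis and ensuring the contribution of the non-trivial component of $G$ to the character of $e_1 e_2$ is handled correctly; the computation itself is elementary.
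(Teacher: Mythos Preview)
Your proof is correct and takes a genuinely different route from the paper. The paper computes the $\mathbb{G}_m^2$-equivariant class $[\{X\}]=h-t_1$ on $\Pp V^\vee_{\mathbb{G}_m^2}$ and then pushes it forward along $\xi:B\mathbb{G}_m^2\to BG$ using the projection formula and the transfer formulas $\xi_*(1)=2$, $\xi_*(t_1)=\beta_1+\gamma$ from \cite[Lemma~7.3]{Eric}. Your argument instead exhibits $\{X,Y\}$ directly as the zero locus of the $G$-eigenvector $e_1e_2\in\mathrm{Sym}^2 V=H^0(\Pp V_G^\vee,\mathcal O(2))$, identifies its character as $\det V\otimes\Gamma$, and reads off $c_1^G$ of the resulting equivariant line bundle. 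Your approach is more self-contained (no appeal to the pushforward formulas from \cite{Eric}) and is closer in spirit to the paper's own computation of $[\{XY\}]$ in the next lemma; the paper's approach has the advantage of being more mechanical once the transfer map is in hand. One cosmetic point: the final appeal to $2\gamma=0$ is unnecessary, since $2h-\beta_1-\gamma$ is already literally $2s_1^1-(\beta_1+\gamma)s_1^0$ with no sign to absorb.
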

\begin{proof}
    Let $\xi: B \mathbb G_m^2 \to BG $ the map induced by $\mathbb G_m^2  \to G$, and let $t_1, t_2 \in \CH^*(B \mathbb G_m^2)$ be the Chern roots of $\xi^* V^\vee_G=V_{\mathbb G_m^2}$.  There is a Cartesian diagram
    $$
    \begin{tikzcd}
    \Pp V^\vee_{\mathbb G_m^2} \arrow[r, "\xi"] \arrow[d] & \Pp V^\vee_G \arrow[d] \\
    B\mathbb G_m^2 \arrow[r, "\xi"]                       & BG                    
\end{tikzcd}
    $$
    where $\xi^* \mathcal O (1) = \mathcal O(1)$ and $\xi_*[\{X\}] = [\{X,Y\}]$. Using \cite[Lemma 2.4.]{EF}, one sees that
    $$
    [X]^{\mathbb G_m^2} = c_1^{\mathbb G_m^2}(\mathcal O_{\Pp V^\vee} (1)) -t_1
    $$
    and so by the projection formula,
    $$
    [\{X,Y\}] = \xi_*(c_1^{\mathbb G_m^2}(\mathcal O_{\Pp V^\vee} (1)) -t_1) = \xi_*(1) c_1^G(\mathcal O_{\Pp V^\vee} (1)) - \xi_*(t_1) = 2 s_1^1 - (\beta_1 + \gamma),
    $$
    where we are using the formulas in \cite[Lemma 7.3.]{Eric} to pushforward along $\xi : B \mathbb G_m^2 \to BG$. 
\end{proof}

\begin{lemma}\label{lemma: class XY}
    The class of $\{XY\} \subset \Pp \Sym^2 (V_G^\vee)$ is $s_2^2 +(\gamma -\beta_1)s_2^1 +2\beta_2s_2^0$
\end{lemma}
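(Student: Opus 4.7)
The plan is to realize $\{XY\} \subset \Pp \Sym^2(V^\vee)$ as the vanishing locus of an explicit section of a rank-$2$ bundle on $\Pp \Sym^2(V^\vee)$, and then read off its class as the corresponding top Chern class.

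As a $G$-representation, $\Sym^2(V^\vee)$ splits as $L \oplus W$, where $L = \mathbb{C}\langle XY\rangle$ (the line on which the swap acts trivially, while $\mathbb{G}_m^2$ acts with character $\lambda_X + \lambda_Y$) and $W = \mathbb{C}\langle X^2\rangle \oplus \mathbb{C}\langle Y^2\rangle$ (the rank-$2$ summand interchanged by the swap). Since the swap fixes $XY$ but acts by $-1$ on $\det V^\vee$, there is a $G$-equivariant identification $L \cong \det(V^\vee) \otimes \Gamma$ of line bundles on $BG$, so that $c_1(L) = -\beta_1 + \gamma$.

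With this decomposition in hand, the composition
\[
\mathcal{O}_{\Pp \Sym^2(V^\vee)}(-1) \hookrightarrow \pi^*\Sym^2(V^\vee) \twoheadrightarrow \pi^* W
\]
vanishes transversally precisely on $\{XY\}$ (the unique point where the tautological sub-line agrees with $\pi^* L$). Thus $\{XY\}$ is the zero locus of the corresponding section of the rank-$2$ bundle $\pi^* W \otimes \mathcal{O}(1)$, and
\[
[\{XY\}] \;=\; c_2\bigl(\pi^* W \otimes \mathcal{O}(1)\bigr) \;=\; h^2 + c_1(W)\, h + c_2(W).
\]

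It remains to compute $c_1(W)$ and $c_2(W)$ from the identity $c(W) = c(\Sym^2(V^\vee))/c(L)$. Using $c(\Sym^2(V^\vee)) = 1 - 3\beta_1 + (2\beta_1^2 + 4\beta_2) + \cdots$, $c(L) = 1 + (-\beta_1 + \gamma)$, and crucially the relations $2\gamma = 0$ and $\gamma(\gamma + \beta_1) = 0$ from Theorem~\ref{thm: chow BG}, a direct expansion gives $c_1(W) = -2\beta_1 - \gamma$ and (after the $\beta_1\gamma$- and $\gamma^2$-terms collapse) $c_2(W) = 4\beta_2$. Substituting into the formula above and rewriting $h^2 = s_2^2 + \beta_1 s_2^1 - 2\beta_2$ via \eqref{eqn: h^2} then yields the stated expression $s_2^2 + (\gamma - \beta_1) s_2^1 + 2\beta_2 s_2^0$. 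The only delicate step is the Chern class bookkeeping for $W$: the $\gamma$-contribution in $c_1(L)$ must be carried through the division, and it is precisely the fact that the swap acts trivially on $XY$ (so that $L$ is not $\det V^\vee$ but rather $\det V^\vee \otimes \Gamma$) that produces the $\gamma s_2^1$ correction in the final formula.
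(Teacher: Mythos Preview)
Your argument is correct and follows essentially the same strategy as the paper: realize $\{XY\}$ as the zero locus of a section of a rank-$2$ bundle obtained by quotienting $\Sym^2(V^\vee)$ by the line $\langle XY\rangle$, then compute the top Chern class. The only difference is presentational: you exploit the $G$-equivariant \emph{splitting} $\Sym^2(V^\vee)=L\oplus W$ directly, whereas the paper first twists by $\Gamma\otimes\det V$ so that $XY$ becomes a map from $\mathcal{O}_{BG}$ and then forms the quotient $Q$; after untwisting one has $Q\otimes\Gamma^\vee\otimes\det(V^\vee)\cong W$, so both approaches compute $c_2$ of the same bundle.
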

\begin{proof}
    Let $\pi : \Pp \Sym^2 (V_G^\vee) \to BG$ be the natural map. Note that $XY$ is a section of the bundle $\Sym^2 (V_G^\vee )\otimes \Gamma_G \otimes \det(V_G)$ over $BG$. Define $Q$ as the quotient 
    \begin{equation}\label{eqn: quotient bundle}
        0 \longrightarrow \mathcal O_{BG} \stackrel{XY}{\longrightarrow} \Sym^2 (V_G^\vee) \otimes \Gamma_G \otimes \det(V_G) \longrightarrow Q \longrightarrow 0.
    \end{equation}
    There is a natural diagram on $\Pp \Sym^2 (V_G^\vee)$
    $$
    \begin{tikzcd}
	&& 0 \\
	&& {\mathcal O_{\Pp \Sym^2 (V_G^\vee)}} \\
	0 & {\mathcal O_{\Pp \Sym^2 (V_G^\vee)} (-1)\otimes \Gamma_G \otimes \det(V_G)} & {\pi^* \Sym^2 (V_G^\vee) \otimes \Gamma_G \otimes \det (V_G)} \\
	&& {\pi^*Q} \\
	&& 0
	\arrow[from=3-1, to=3-2]
	\arrow[from=3-2, to=3-3]
	\arrow[from=1-3, to=2-3]
	\arrow["XY", from=2-3, to=3-3]
	\arrow[from=3-3, to=4-3]
	\arrow[from=4-3, to=5-3]
	\arrow["\psi"', from=3-2, to=4-3]
\end{tikzcd}
    $$
    and $\{XY\} \subset \Pp \Sym^2 (V^\vee)$ is precisely the locus where
    $$
    \psi \in  H^0(\Pp\Sym^2 (V_G^\vee), \pi^* Q \otimes \mathcal{O}_{\Pp\Sym^2 (V_G^\vee)}(1) \otimes \Gamma_G^\vee \otimes \det(V_G^\vee))
    $$
     vanishes. Since the codimension of $\{XY\} \subseteq \Pp \Sym^2 (V_G^\vee)$ is $2$, the expected one, we have by \cite[Theorem 14.4]{Fu} that
    $$
    [\{XY\}] = c_2 (\pi^* Q \otimes \mathcal{O}_{\Pp\Sym^2 (V_G^\vee)}(1) \otimes \Gamma_G^\vee \otimes \det(V_G^\vee)).
    $$
    If $r_1, r_2$ are the Chern roots of $V_G^\vee$ (so $r_1 + r_2 =-\beta_1$ and $r_1r_2 = \beta_2$) then, using the relations in $\CH^*(BG)$, we see that the total Chern class of $\Sym^2 (V_G^\vee) \otimes \Gamma \otimes \det(V_G)$ is
    \begin{align*}
        c(\Sym^2 (V_G^\vee) \otimes \Gamma_G \otimes \det(V_G))=&(1+2r_1 + \gamma + \beta_1)(1 + r_1+r_2 +\gamma +\beta_1)(1+2r_2 +\gamma + \beta_1)\\
        =&(1+\gamma)(1+\gamma +r_1-r_2)(1+\gamma +r_2-r_1)\\
        =&(1+\gamma)((1+\gamma)^2-(r_1-r_2)^2)\\
        =&(1+\gamma)(1+2\gamma +\gamma^2 -\beta_1^2 + 4\beta_2)\\
        =&(1+\gamma)(1-\beta_1(\gamma+\beta_1) + 4\beta_2)\\
        =&1 +\gamma -\beta_1(\gamma+\beta_1) + 4\beta_2,
    \end{align*}
     which is the same as the total Chern class of $Q$ by the sequence in \eqref{eqn: quotient bundle}. Therefore,
    \begin{align*}
        [\{XY\}]&=c_2 (\pi^* Q \otimes \mathcal{O}_{\Pp\Sym^2 (V_G^\vee)}(1) \otimes \Gamma_G^\vee \otimes \det(V_G^\vee))\\
        &=c_2(\pi^*Q) + c_1(\pi^*Q)(c_1(\mathcal O_{\Pp \Sym^2 (V_G^\vee)} (1)) -\gamma -\beta_1) + (c_1(\mathcal O_{\Pp \Sym^2 (V_G^\vee)} (1)) -\gamma -\beta_1)^2\\
        &=4\beta_2 -\beta_1(\gamma + \beta_1) + \gamma s_2^1 -\gamma(\gamma +\beta_1) + s^2_2 +\beta_1 s^1_2 - 2\beta_2 -2(\gamma +\beta_1) s_2^1+ (\gamma +\beta_1)^2\\
        &=s_2^2 +(\gamma -\beta_1)s_2^1 +2\beta_2,
    \end{align*}
    where we have used the identity \eqref{eqn: h^2} to write $c_1(\mathcal O_{\Pp \Sym^2 (V_G^\vee)} (1))^2$ in terms of the $s_2^i$.
\end{proof}

\begin{lemma}\label{lem: finite subsets}The following relations hold in $\CH_0(\Sym^k (V_G^\vee))$ for various $k$:
    \begin{itemize}
        \item $[\{ X^2 , Y^2\}] = 2s_2^2 -2\beta_1s_2^1+2(\beta_1^2-2\beta_2)s_2^0$.
        \item $[\{X^2Y, XY^2\}] = 2s_3^3-(3\beta_1 +\gamma)s_3^2 + 2(2\beta_2+\beta_1^2)s_3^1-6\beta_2\beta_1s_3^0$.
        \item $[\{X^3, Y^3\}] = 2s_3^3 +(\gamma-3\beta_1)s_3^2 +2(3\beta_1^2-6\beta_2)s_3^1  + 6\beta_1(3\beta_2-\beta_1^2)s_3^0$.
        \item $[\{X^2Y^2\}]= s_4^4 -2\beta_1s_4^3 +2(\beta_1^2+2\beta_2)s_4^2 -12\beta_2\beta_1s_4^1 + 24\beta_2^2s_4^0$.
        \item $[\{X^3Y, XY^3\}] =2s_4^4-4\beta_1s_4^3+6\beta_1^2s_4^2-6\beta_1^3s_4^1 +24\beta_2(\beta_1^2-2\beta_2)$.
        \item $[\{X^4, Y^4\}] = 2s_4^4-4\beta_1s_4^3+12(\beta_1^2-2\beta_2)s_4^2 + 24\beta_1(\beta_2-\beta_1^2)s_4^1 + 24(\beta_1^4 + 2\beta_2^2-4\beta_2\beta_1^2)s_4^0$.
    \end{itemize}
\end{lemma}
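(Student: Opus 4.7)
The plan is to compute each class by realizing the corresponding finite subset as the image of a previously-computed subset under either a squaring or a multiplication map between projective bundles of symmetric powers of $V_G^\vee$, and then invoking the pushforward formulas already established in this section. Concretely, I will use the squaring pushforward formulas of Lemma \ref{lem: the diagonal of Psym}, the multiplication pushforward formula \eqref{eqn: multiplication maps}, and the classes $[\{X,Y\}]$ and $[\{XY\}]$ from the two preceding lemmas. Throughout I substitute $c_1=-\beta_1$ and $c_2=\beta_2$ (since $E=V_G^\vee$ while $\beta_i=c_i(V_G)$), and simplify final expressions using the relations $2\gamma=0$ and $\gamma(\gamma+\beta_1)=0$ in $\CH^*(BG)$, which kill any term of the form $2\beta_1^a\beta_2^b\gamma$ or $2\gamma^2$.

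The factorizations I propose are: $\{X^2,Y^2\}=\mathrm{sq}(\{X,Y\})$ along $\Pp V_G^\vee \to \Pp\Sym^2 V_G^\vee$; $\{X^2Y^2\}=\mathrm{sq}(\{XY\})$ and $\{X^4,Y^4\}=\mathrm{sq}(\{X^2,Y^2\})$ along $\Pp\Sym^2 V_G^\vee \to \Pp\Sym^4 V_G^\vee$; $\{X^2Y,XY^2\}=\mathrm{mult}(\{X,Y\}\times\{XY\})$ along $\Pp V_G^\vee \times_{BG} \Pp\Sym^2 V_G^\vee \to \Pp\Sym^3 V_G^\vee$; and $\{X^3Y,XY^3\}=\mathrm{mult}(\{X^2,Y^2\}\times\{XY\})$ along $\Pp\Sym^2 V_G^\vee \times_{BG} \Pp\Sym^2 V_G^\vee \to \Pp\Sym^4 V_G^\vee$. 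In each of these five cases the map in question is a closed immersion or is injective with multiplicity one on the relevant finite subscheme, so the pushforward recovers the fundamental class of the image directly.

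The case $\{X^3,Y^3\}$ does not fit this pattern, since no single multiplication of previously-computed sets isolates the two cubes. The trick is the decomposition
\[
\mathrm{mult}\bigl(\{X,Y\}\times\{X^2,Y^2\}\bigr) = \{X^3, X^2Y, XY^2, Y^3\} = \{X^3,Y^3\} \sqcup \{X^2Y,XY^2\},
\]
from which one reads off
\[
[\{X^3,Y^3\}] = \mathrm{mult}_*\bigl([\{X,Y\}]\otimes [\{X^2,Y^2\}]\bigr) - [\{X^2Y,XY^2\}],
\]
reducing it to previously-computed quantities.

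The main obstacle is purely bookkeeping: each of the six computations requires expanding a product of the $s_r^j$ classes, pushing forward term-by-term via the binomial coefficients $\binom{a+b-\alpha-\beta}{a-\alpha}$ of \eqref{eqn: multiplication maps} or via the coefficients from Lemma \ref{lem: the diagonal of Psym}, and then collapsing modulo $2\gamma$ and $\gamma(\gamma+\beta_1)$. The $\{X^3,Y^3\}$ case is the most delicate, both because of the subtraction and because one must verify that $\mathrm{mult}_*\bigl([\{X,Y\}]\otimes[\{X^2,Y^2\}]\bigr)$ really equals the fundamental class of the four-point set $\{X^3, X^2Y, XY^2, Y^3\}$ (no extra multiplicities); this follows from the injectivity of multiplication on the four points $(X,X^2),(X,Y^2),(Y,X^2),(Y,Y^2)$.
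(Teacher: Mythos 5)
Your plan is correct, and for $\{X^2Y,XY^2\}$, $\{X^3,Y^3\}$ and $\{X^3Y,XY^3\}$ it coincides with the paper's proof verbatim, including the inclusion--exclusion $[\{X^3,Y^3\}]=\mathrm{mult}_*\bigl([\{X,Y\}]\otimes[\{X^2,Y^2\}]\bigr)-[\{X^2Y,XY^2\}]$ and the multiplicity-one checks. The only genuine difference is in the three ``diagonal'' cases: where you push forward along the squaring maps of Lemma \ref{lem: the diagonal of Psym} directly, the paper instead computes $\mathrm{mult}_*\bigl([\{X,Y\}]\otimes[\{X,Y\}]\bigr)-2[\{XY\}]$, $\mathrm{mult}_*\bigl([\{XY\}]\otimes[\{XY\}]\bigr)$ and $\mathrm{mult}_*\bigl([\{X^2,Y^2\}]\otimes[\{X^2,Y^2\}]\bigr)-2[\{X^2Y^2\}]$, i.e.\ it multiplies each set with itself and subtracts the off-diagonal contributions. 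The two routes are equivalent --- $\mathrm{sq}=\mathrm{mult}\circ\Delta$ is exactly how Lemma \ref{lem: the diagonal of Psym} was derived --- and yours is marginally cleaner since it avoids the cross terms; both need the $\CH^*(BG)$-linearity of the pushforward and the same reductions modulo $2\gamma=0$ and $\gamma(\gamma+\beta_1)=0$, and both produce identical answers.

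One caveat: carrying out your computation of $[\{X^4,Y^4\}]$ (by either route) gives $72\beta_1\beta_2-24\beta_1^3$ for the $s_4^1$-coefficient, not the stated $24\beta_1(\beta_2-\beta_1^2)$. The paper's own displayed intermediate step $-8\binom{3}{1}\beta_1(\beta_1^2-2\beta_2)+24\beta_2\beta_1$ also equals $72\beta_1\beta_2-24\beta_1^3$, so the mismatch is a slip in the final line of the statement rather than a flaw in your method; it is harmless for Theorem \ref{thm: Chow of R2} because both expressions vanish modulo the relations \eqref{eqn: relations} (in particular $2\beta_1=0$) under which the lemma is used.
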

\begin{proof}
    In this proof we will repeatedly apply formula \eqref{eqn: multiplication maps} to compute the pushforwards along $\mathrm{mult}$. In $\CH_0(\Pp \Sym^2 (V_G^\vee))$ we have 
    \begin{align*}
        [\{X^2,Y^2\}] &= \mathrm{mult}_* ([\{X,Y\}],[\{X,Y\}]) -2 [\{XY\}]\\
        &= \mathrm{mult}_*(2s_1^1 - (\beta_1 + \gamma)s_1^0,2s_1^1 - (\beta_1 + \gamma)s_1^0) - 2(s_2^2+(\gamma -\beta_1)s_2^1+2\beta_2s_2^0)\\
        &=  2 s_2^2 + (-4(\beta_1+\gamma) -2(\gamma-\beta_1)) s_2^1+\left(\binom{2}{1}(\beta_1+\gamma)^2 -4\beta_2\right)s_2^0\\
        &=2s_2^2 -2\beta_1s_2^1+2(\beta_1^2-2\beta_2)s_2^0.
    \end{align*}
    In $\CH_0(\Pp \Sym^3 (V_G^\vee))$ we have
    \begin{align*}
        [\{X^2Y, XY^2\}] &= \mathrm{mult}_*([\{X,Y\}], [\{XY\}])\\
        &=\mathrm{mult}_*(2s_1^1-(\beta_1 +\gamma)s_1^0, s_2^2 +(\gamma-\beta_1)s_2^1 + 2\beta_2s_2^0)\\
        &=s_3^3+(2(\gamma-\beta_1)-(\beta_1 +\gamma))s_3^2 + \left(4\beta_2-\binom{2}{1}(\beta_1 + \gamma)(\gamma-\beta_1)\right)s_3^1 - 2\binom{3}{1}(\beta_1 + \gamma)\beta_2s_3^0\\
        &=2s_3^3-(3\beta_1 +\gamma)s_3^2 + 2(2\beta_2+\beta_1^2)s_3^1-6\beta_2\beta_1s_3^0
    \end{align*}
      and
    \begin{align*}
        [\{X^3, Y^3\}] &= \mathrm{mult}_*([\{X,Y\}], [\{X^2, Y^2\}]) - [\{X^2Y, XY^2\}]\\
        &=\mathrm{mult}_*(2s_1^1 - (\beta_1 + \gamma)s_1^0, 2s_2^2 -2\beta_1s_2^1+2(\beta_1^2-2\beta_2)s_2^0)\\
        &- \left(2s_3^3-(3\beta_1 +\gamma)s_3^2 + 2(2\beta_2+\beta_1^2)s_3^1-6\beta_2\beta_1 s_3^0\right)\\
        &= 4s_3^3-(2(\beta_1 + \gamma) +4\beta_1)s_3^2 + \left(2\beta_1\binom{2}{1}(\beta_1 +\gamma)+4(\beta_1^2-2\beta_2)\right)s_3^1 \\ 
        &- \binom{3}{1}2(\beta_1 + \gamma)(\beta_1^2-2\beta_2)s_3^0-2s_3^3+(3\beta_1 +\gamma)s_3^2 - 2(2\beta_2+\beta_1^2)s_3^1+6\beta_2\beta_1s_3^0\\
        &=2s_3^3 +(\gamma-3\beta_1)s_3^2 +6(\beta_1^2-2\beta_2)s_3^1  + 6\beta_1(3\beta_2-\beta_1^2)s_3^0.
    \end{align*}
    In $\CH_0(\Pp \Sym^4 (V_G^\vee))$ we have
    \begin{align*}
        [\{X^2Y^2\}] &= \mathrm{mult}_*([\{XY\}, \{XY\}])\\
        &=\mathrm{mult}_*(s_2^2 +(\gamma -\beta_1)s_2^1 +2\beta_2s_2^0,s_2^2 +(\gamma -\beta_1)s_2^1 +2\beta_2s_2^0)\\
        &=s_4^4 + 2(\gamma-\beta_1)s_4^3 + \left(\binom{2}{1}(\gamma-\beta_1)^2+4\beta_2\right)s_4^2 + 4\binom{3}{1}\beta_2(\gamma-\beta_1)s_4^1 + 4\binom{4}{2}\beta_2^2s_4^0\\
        &=s_4^4 -2\beta_1s_4^3 +2(\beta_1^2+2\beta_2)s_4^2 -12\beta_2\beta_1s_4^1 + 24\beta_2^2s_4^0
    \end{align*}
    and
    \begin{align*}
        [\{X^3Y, XY^3\}] &= \mathrm{mult}_*([\{XY\}, \{X^2,Y^2\}])\\
        &=\mathrm{mult}_*(s_2^2 +(\gamma -\beta_1)s_2^1 +2\beta_2s_2^0, 2s_2^2 -2\beta_1s_2^1+2(\beta_1^2-2\beta_2)s_2^0)\\
        &=2s_4^4 + (2(\gamma-\beta_1)-2\beta_1)s_4^3 + \left(4\beta_2+2(\beta_1^2-2\beta_2) -\binom{2}{1}2\beta_1(\gamma-\beta_1)\right)s_4^2\\
        &+\binom{3}{1}(-4\beta_2\beta_1+2(\gamma-\beta_1)(\beta_1^2-2\beta_2))s_4^1 + \binom{4}{2}4(\beta_1^2-2\beta_2)\beta_2s_4^0\\
        &=2s_4^4-4\beta_1s_4^3+6\beta_1^2s_4^2-6\beta_1^3s_4^1 +24\beta_2(\beta_1^2-2\beta_2) s_4^0,
    \end{align*}
    and 
    \begin{align*}
        [\{X^4, Y^4\}] &= \mathrm{mult}_*([\{X^2,Y^2\}, \{X^2,Y^2\}]) - 2[\{X^2Y^2\}]\\
        &=\mathrm{mult}_*(2s_2^2 -2\beta_1s_2^1+2(\beta_1^2-2\beta_2)s_2^0, 2s_2^2 -2\beta_1s_2^1+2(\beta_1^2-2\beta_2)s_2^0)\\
        &-2(s_4^4 -2\beta_1s_4^3 +2(\beta_1^2+2\beta_2)s_4^2 -12\beta_2\beta_1s_4^1 + 24\beta_2^2s_4^0)\\
        &=   4s_4^4 -8\beta_1s_4^3 +\left(\binom{2}{1}4\beta_1^2+8(\beta_1^2-2\beta_2)\right)s_4^2 - 8\binom{3}{1}\beta_1(\beta_1^2-2\beta_2)s_4^1 +4 \binom{4}{2}(\beta_1^2-2\beta_2)^2s_4^0   \\
        &-2s_4^4 +4\beta_1s_4^3 -4(\beta_1^2+2\beta_2)s_4^2 +24\beta_2\beta_1s_4^1 -48\beta_2^2s_4^0\\
        &=2s_4^4-4\beta_1s_4^3+12(\beta_1^2-2\beta_2)s_4^2 + 24\beta_1(\beta_2-\beta_1^2)s_4^1 + 24(\beta_1^4 + 2\beta_2^2-4\beta_2\beta_1^2)s_4^0.
    \end{align*}
\end{proof}

\subsection{Proof of Theorem \ref{thm: Chow of R2}}\label{sec: computation of Chow}
Recall that, by the projective bundle formula,
$$
\CH^*(\Pp \Sym^4(V_G^\vee)) = \frac{\CH^*(BG)))[h]}{(P(h))} = \frac{\Z[\beta_1, \beta_2, \gamma, h]}{(2 \gamma , \gamma (\gamma + \beta_1), P(h))},
$$
where
$$
P(h) = h^5 + h^4 c_1(\Sym^4(V_G^\vee)) + \ldots + c_5(\Sym^4(V_G^\vee)).
$$
Therefore, by Lemma \ref{lem: chow of Gm torsor}, and Proposition \ref{prop: p is envelope}
\begin{equation}\label{eqn: quotient ring 1}
    \CH^*(\mathrm{Sym}^4 (V_G^\vee) \otimes \mathrm{det}(V_G) \otimes \Gamma \smallsetminus \Delta) = \frac{Z[\beta_1, \beta_2, \gamma]}{(2\gamma, \gamma(\gamma+\beta_1), P(h), \mathrm{im}(p'_*))},
\end{equation}
where
$$
p' : \underline{\Delta}' \longrightarrow \Pp(\Sym^4(V^\vee_G))
$$
is the composition of $p$ and the inclusion  $\underline{\Delta} \subseteq \Pp \Sym^4(V^\vee_G)$ and we are substituting $h=\beta_1 + \gamma$ in $P$ and in $\mathrm{im(p'_*)}$. Let $I \subset \Z[\beta_1, \beta_2, \gamma]$ be the ideal in the denominator of \eqref{eqn: quotient ring 1}. We will show that 
\begin{equation}\label{eqn: ideal I}
I = (2 \gamma, 2\beta_1, 8\beta_2, \gamma^2+ \beta_1 \gamma , \beta_1 ^2+\beta_1\gamma)
\end{equation}
and this will prove Theorem \ref{thm: Chow of R2} (up to identification of the classes $\beta_i$ and $\gamma$ done in \S\ref{sec: tautological classes}).

\begin{lemma}
    $\{2\beta_1, \beta_1^2+\beta_1\gamma, 8\beta_2\}$ are in $I$.
\end{lemma}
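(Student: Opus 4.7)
The plan is to exhibit each of the three elements as an explicit pushforward class from the envelope $p': \underline{\Delta}' \to \Pp\Sym^4(V_G^\vee)$, expressed in the quotient ring where $h=\beta_1+\gamma$, and then to reduce modulo the other known generators of $I$. Every component of $\underline{\Delta}'$ has the form $\Pp\Sym^a(V_G^\vee)\times Z$ with $Z$ a finite-set class from Lemma~\ref{lem: finite subsets}, so each pushforward is a direct application of \eqref{eqn: multiplication maps}, supplemented by Lemma~\ref{lem: the diagonal of Psym} whenever squaring is involved.

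For $2\beta_1$, I would push forward the fundamental class of $\Pp\Sym^3(V_G^\vee)\times\{X,Y\}$ along $p_{1,2}$. Using $[\{X,Y\}]=2s_1^1-(\beta_1+\gamma)s_1^0$ together with \eqref{eqn: multiplication maps}, the pushforward equals $2s_4^1-4(\beta_1+\gamma)s_4^0$; substituting $h=\beta_1+\gamma$ and reducing modulo $2\gamma$ yields $-2\beta_1$.

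For $8\beta_2$, I would push forward $s_1^1\otimes s_2^0$ along $p_{1,1}$, which factors through the squaring map on $\Pp V_G^\vee$. By Lemma~\ref{lem: the diagonal of Psym} the squaring sends $s_1^1$ to $s_2^2-2\beta_2 s_2^0$, and \eqref{eqn: multiplication maps} then produces $s_4^2-12\beta_2$. Using $s_4^2=h^2-\beta_1 h+4\beta_2$ and setting $h=\beta_1+\gamma$ gives $\beta_1\gamma+\gamma^2-8\beta_2$, which is $-8\beta_2$ modulo $\gamma^2+\beta_1\gamma$.

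For $\beta_1^2+\beta_1\gamma$, I would push forward the fundamental class of $\Pp\Sym^2(V_G^\vee)\times\{X^2,Y^2,XY\}$ along $p_{2,3}$. Summing $[\{X^2,Y^2\}]$ and $[\{XY\}]$ from Lemma~\ref{lem: finite subsets} and applying \eqref{eqn: multiplication maps} produces a polynomial whose specialization at $h=\beta_1+\gamma$ equals $3\beta_1^2-3\beta_1\gamma+6\gamma^2$. Reducing modulo $\gamma^2+\beta_1\gamma$ leaves $3\beta_1^2-9\beta_1\gamma$, and invoking the $2\beta_1\in I$ obtained in the first step (which forces $2\beta_1^2, 2\beta_1\gamma\in I$) collapses this to $\beta_1^2+\beta_1\gamma$.

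Each step is a routine application of the formulas already compiled in Lemmas~\ref{lem: finite subsets} and~\ref{lem: the diagonal of Psym}, so no step presents a genuine obstacle; the only point of care is order dependence, since the argument for $\beta_1^2+\beta_1\gamma$ relies on $2\beta_1\in I$ being already in hand.
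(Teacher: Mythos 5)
Your proposal is correct and follows essentially the same strategy as the paper: produce explicit pushforward classes from components of the envelope and take integer linear combinations, with the first two computations (via $(p_{1,2})_*(s_3^0\otimes[\{X,Y\}])$ and $(p_{1,1})_*(s_1^1\otimes s_2^0)$) coinciding exactly with the paper's. For $\beta_1^2+\beta_1\gamma$ the paper instead uses $(p_{1,2})_*(s_3^1\otimes[\{X,Y\}])=2s_4^2-3(\beta_1+\gamma)s_4^1$, which reduces to $8\beta_2-3\beta_1(\beta_1+\gamma)$ and is then combined with $8\beta_2$ and $2\beta_1$, but your alternative via $(p_{2,3})_*$ of the fundamental class is an equally valid choice and your arithmetic checks out.
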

\begin{proof}
    From Lemmas \ref{lemma: class XY} and \ref{lem: finite subsets}, we have
    \begin{align*}
    (p_{1,2})_*(s_3^0 \otimes [\{X,Y\}]) &= 2 \mathrm{mult}(s_3^0 , s_1^1) - (\beta_1 + \gamma)\mathrm{mult}(s_3^0, s_1^0)=2s_4^1 -4\beta_1\\
    (p_{1,2})_*(s_3^1 \otimes [\{X,Y\}]) &= 2 \mathrm{mult}(s_3^1 , s_1^1) - (\beta_1 + \gamma)\mathrm{mult}(s_3^1, s_1^0) = 2s_4^2 - 3(\beta_1 +\gamma) s_4^1\\
    (p_{1,1})_*(s_1^1 \otimes s_2^0) &= \mathrm{mult} (\mathrm{sq}(s_1^1), s_2^0) = \mathrm{mult}(s_2^2, s_2^0) - 2\beta_2\mathrm{mult}(s_2^0, s_2^0)\\
    &= s_4^2 -12\beta_2.
    \end{align*}
    Which, after using \eqref{eqn: h^2}, the substitution $h = \beta_1 + \gamma$, and the relations coming from $\CH^*(BG)$, yield $-2\beta_1$, $ 8\beta_2 -3\beta_1(\beta_1 + \gamma)$ and $-8\beta_2$, respectively, and linear combinations of these give the desired elements.
\end{proof}
From now on, and until the rest of the section, we work modulo the relations
\begin{equation}\label{eqn: relations}
2\beta_1= 2\gamma=8\beta_2=0, \beta_1^2 =\beta_1\gamma =\gamma^2,
\end{equation}
and without mentioning that we are substituting $h=\beta_1 +\gamma$. Note that $h^2=0$ modulo the relations \eqref{eqn: relations}.

Also, note that, by \eqref{eqn: h^2}, \eqref{eqn: h^3} and \eqref{eqn: h^4}
\begin{align*}
s_4^2 &= h^2-\beta_1h + 4\beta_2=4\beta_2,\\
s_4^3 &= h^3 -3\beta_1 s_4^2 -(\beta_1^2-10\beta_2)s_4^1 + 4\beta_1\beta_2 =0,\\
s_4^4 &= h^4 - 6\beta_1 s_4^3 - (7\beta_1^2 - 16\beta_2)s_4^2 + (-\beta_1^3 +32\beta_1\beta_2)s_4^1 + 4\beta_1^2\beta_2-40\beta_2^2=0.
\end{align*}
An immediate but useful consequence is the following.

\begin{remark}\label{rmk: vanishing}
    We have $\alpha\cdot\mathrm{mult}(s_a^{a'}, s_{4-a}^{b'})=0$ whenever $(\beta_1+\gamma) \mid \alpha$, or $a'+b' \geq 1$ and $2 \mid \alpha$, or $a'+b' \geq 3$.
\end{remark}

\begin{lemma}
    Modulo the relations \eqref{eqn: relations}, we have 
    $$
    \mathrm{im} (p_*')=0.
    $$ 
\end{lemma}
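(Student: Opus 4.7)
The plan is to reduce every generator of $\CH_*^G(\underline{\Delta}')$ to an element of the already-identified ideal $I$ via the projection formula combined with the explicit formulas from the previous subsections. First observe that each component $p_{i,j}$ of $p$ factors through a multiplication map:
\[
\Pp \mathrm{Sym}^a(V_G^\vee) \times Z \hookrightarrow \Pp \mathrm{Sym}^a(V_G^\vee) \times \Pp \mathrm{Sym}^b(V_G^\vee) \xrightarrow{\mathrm{mult}} \Pp \mathrm{Sym}^{a+b}(V_G^\vee),
\]
where $Z$ is a zero-dimensional $G$-invariant subset whose equivariant fundamental class was determined in Lemma \ref{lem: finite subsets}, with the two exceptions $p_{1,1}$ and $p_{2,1}$, which additionally involve the squaring map and are handled by Lemma \ref{lem: the diagonal of Psym}. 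By the projection formula, the pushforward of any class $\alpha \otimes [Z]$ in the source reduces to $\mathrm{mult}_*(\alpha, [Z])$, which the formula \eqref{eqn: multiplication maps} expands into an explicit $\CH^*(BG)$-linear combination of the classes $s_4^0, \dots, s_4^4$.

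The second step is to reduce modulo \eqref{eqn: relations}. The vanishings recorded immediately after the relations are doing all the work: $s_4^2 = 4\beta_2$, $s_4^3 = s_4^4 = 0$, and $(\beta_1+\gamma)\,s_4^c = 0$ for $c \geq 1$, the last of which follows from $h^2 = (\beta_1+\gamma)^2 = 2\beta_1\gamma = 0$ modulo \eqref{eqn: relations}, together with $8\beta_2 = 2\beta_1 = 2\gamma = 0$ and $\beta_1^2 = \beta_1\gamma$. Concretely, any product $\alpha \cdot \mathrm{mult}_*(s_a^{a'}, s_{4-a}^{b'})$ vanishes mod the relations whenever $(\beta_1+\gamma) \mid \alpha$, or $a'+b' \geq 1$ and $2 \mid \alpha$, or $a'+b' \geq 3$. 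Inspection of each coefficient in the classes listed in Lemma \ref{lem: finite subsets} shows that every resulting monomial either lies in one of these vanishing regimes or is already a multiple of one of the generators $2\gamma$, $2\beta_1$, $8\beta_2$, $\gamma(\gamma+\beta_1)$, $\beta_1(\beta_1+\gamma)$ of $I$.

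The main obstacle is not conceptual but organizational: there are several $p_{i,j}$'s and each source has a handful of $\CH^*(BG)$-module generators, so a short but lengthy table of cases must be traversed. The $p_4$ components contribute only the five zero-cycles from Lemma \ref{lem: finite subsets}, each of which is directly verified to vanish under the same reductions (for instance $[\{X^2Y^2\}] \equiv 8\beta_2\beta_2 + \text{terms killed by } 2\beta_1,\, 2\gamma,\, h^2 \equiv 0$). No further relation on $\beta_1, \beta_2, \gamma$ emerges from any of these pushforwards, which is precisely what is needed to complete the identification of the ideal $I$ in \eqref{eqn: ideal I} and hence conclude Theorem \ref{thm: Chow of R2}.
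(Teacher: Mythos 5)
Your argument follows the paper's proof essentially verbatim: reduce each $p_{i,j}$ to multiplication/squaring maps, push forward the $\CH^*(BG)$-module generators using \eqref{eqn: multiplication maps}, Lemma \ref{lem: the diagonal of Psym} and Lemma \ref{lem: finite subsets}, and kill every resulting term with the same three vanishing criteria ($(\beta_1+\gamma)\mid\alpha$, or $a'+b'\geq 1$ and $2\mid\alpha$, or $a'+b'\geq 3$). The only imprecision is your claim that $p_{1,1}$ and $p_{2,1}$ are the sole components involving squaring --- $p_{2,2}$ and $p_{3,2}$ do as well; the paper disposes of these two by noting that they factor through $p_{1,1}$, so their pushforwards contribute nothing beyond $\mathrm{im}((p_{1,1})_*)$ and need not be computed separately.
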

\begin{proof}
    We have to pushforward all the $s_r^i$ classes along maps $p_{i,j}$. Note that $p_{2,2}$ and $p_{3,2}$ factor through $p_{1,1}$, and so there is no need to consider them. Remark \ref{rmk: vanishing} will be used repeatedly and without further mention.\\
    \underline{Pushforward along $p_{1,1} = \mathrm{mult}(\mathrm{sq}(\cdot), \cdot)$}:
    \begin{align*}
    (p_{1,1})_*(s_1^0\otimes s_2^0) &=2\mathrm{mult}(s_2^1,s_2^0) -2\beta_1\mathrm{mult}(s_2^0, s_2^0)=0\\
    (p_{1,1})_*(s_1^0\otimes s_2^1) &=2\mathrm{mult}(s_2^1,s_2^1) -2\beta_1\mathrm{mult}(s_2^0, s_2^1)=0\\
    (p_{1,1})_*(s_1^0\otimes s_2^2) &=2\mathrm{mult}(s_2^1,s_2^1) -2\beta_1\mathrm{mult}(s_2^0, s_2^1)=0\\
    (p_{1,1})_*(s_1^1\otimes s_2^0) &=\mathrm{mult}(s_2^2,s_2^0) -2\beta_2\mathrm{mult}(s_2^0, s_2^0)=s_4^2 -12\beta_2=0\\
    (p_{1,1})_*(s_1^1\otimes s_2^1) &=\mathrm{mult}(s_2^2,s_2^1) -2\beta_2\mathrm{mult}(s_2^0, s_2^1)=0\\
    (p_{1,1})_*(s_1^1\otimes s_2^2) &= \mathrm{mult}(s_2^2,s_2^2) -2\beta_2\mathrm{mult}(s_2^0, s_2^2)=0
    \end{align*}
    \underline{Pushforward along $p_{1,2} = \mathrm{mult}(\cdot, \{X,Y\})$}:
    \begin{align*}
        (p_{1,2})_*(s_3^0) &= 2\mathrm{mult}(s_3^0 , s_1^1)-(\beta_1 +\gamma)\mathrm{mult}(s_3^0, s_1^0)=0 \\
        (p_{1,2})_*(s_3^1) &= 2\mathrm{mult}(s_3^1 , s_1^1)-(\beta_1 +\gamma)\mathrm{mult}(s_3^1, s_1^0) =0 \\
        (p_{1,2})_*(s_3^2) &= 2\mathrm{mult}(s_3^2 , s_1^1)-(\beta_1 +\gamma)\mathrm{mult}(s_3^2, s_1^0) =0 \\
        (p_{1,2})_*(s_3^3) &= 2\mathrm{mult}(s_3^3 , s_1^1)-(\beta_1 +\gamma)\mathrm{mult}(s_3^3, s_1^0) =0
    \end{align*}
    \underline{Pushforward along $p_{2,1} = \mathrm{sq}(\cdot)$}:
    \begin{align*}
        (p_{2,1})_*(s_2^0) &=  4s_4^2 - 12\beta_1s_4^1 + 12\beta_1^2=0\\
        (p_{2,1})_*(s_2^1) &=  2s_4^3 -2\beta_1 s_4^2 -12\beta_2 s_4^1+24 \beta_1\beta_2=0\\
        (p_{2,1})_*(s_2^2) &= s_4^4 -4\beta_2 s_4^2+24\beta_2^2=0,
    \end{align*}
    see Lemma \ref{lem: the diagonal of Psym}. \\
    \underline{Pushforward along $p_{2,3} = \mathrm{mult}(\cdot , \{X^2, Y^2, XY\})$}
    \begin{align*}
        (p_{2,3})_*(s_2^0\otimes [\{X^2, Y^2\}])&=2\mathrm{mult}(s_2^0,s_2^2) -2\beta_1\mathrm{mult}(s_2^0,s_2^1)+2(\beta_1^2-2\beta_2)\mathrm{mult}(s_2^0,s_2^0)=0\\
        (p_{2,3})_*(s_2^1\otimes [\{X^2, Y^2\}])&=2\mathrm{mult}(s_2^1,s_2^2) -2\beta_1\mathrm{mult}(s_2^1,s_2^1)+2(\beta_1^2-2\beta_2)\mathrm{mult}(s_2^1,s_2^0)=0\\
        (p_{2,3})_*(s_2^2\otimes [\{X^2, Y^2\}])&= 2\mathrm{mult}(s_2^2,s_2^2) -2\beta_1\mathrm{mult}(s_2^2,s_2^1)+2(\beta_1^2-2\beta_2)\mathrm{mult}(s_2^2,s_2^0)=0\\
        (p_{2,3})_*(s_2^0\otimes [\{XY\}])&= \mathrm{mult}(s_2^0,s_2^2) +(\gamma -\beta_1)\mathrm{mult}(s_2^0,s_2^1) +2\beta_2\mathrm{mult}(s_2^0,s_2^0)\\
        &=s_4^2+12\beta_2=0\\
        (p_{2,3})_*(s_2^1\otimes [\{XY\}])&= \mathrm{mult}(s_2^1,s_2^2) +(\gamma -\beta_1)\mathrm{mult}(s_2^1,s_2^1) +2\beta_2\mathrm{mult}(s_2^1,s_2^0)=0\\
        (p_{2,3})_*(s_2^2\otimes [\{XY\}])&= \mathrm{mult}(s_2^2,s_2^2) +(\gamma -\beta_1)\mathrm{mult}(s_2^2,s_2^1) +2\beta_2\mathrm{mult}(s_2^2,s_2^0)=0
    \end{align*}
    \underline{Pushforward along $p_{3,1} = \mathrm{mult}(\cdot, \{X^3, Y^3, X^2Y, XY^2\})$}:
    \begin{align*}
        (p_{3,1})_*(s_1^0, [\{X^3, Y^3\}])  =\,& 2\mathrm{mult}(s_1^0,s_3^3) +(\gamma-3\beta_1)\mathrm{mult}(s_1^0,s_3^2)\\
        &+2(3\beta_1^2-6\beta_2)\mathrm{mult}(s_1^0,s_3^1)  + 6\beta_1(3\beta_2-\beta_1^2)\mathrm{mult}(s_1^0,s_3^0)=0\\
        (p_{3,1})_*(s_1^1, [\{X^3, Y^3\}]) =\,& 2\mathrm{mult}(s_1^1,s_3^3) +(\gamma-3\beta_1)\mathrm{mult}(s_1^1,s_3^2)\\
        &+2(3\beta_1^2-6\beta_2)\mathrm{mult}(s_1^1,s_3^1)  + 6\beta_1(3\beta_2-\beta_1^2)\mathrm{mult}(s_1^1,s_3^0)=0 \\
        (p_{3,1})_*(s_1^0, [\{X^2Y, XY^2\}])  =\,& 2\mathrm{mult}(s_1^0,s_3^3)-(3\beta_1 +\gamma)\mathrm{mult}(s_1^0,s_3^2)\\
        &+ 2(2\beta_2+\beta_1^2)\mathrm{mult}(s_1^0,s_3^1)-6\beta_2\beta_1\mathrm{mult}(s_1^0,s_3^0)=0\\
        (p_{3,1})_*(s_1^1, [\{X^2Y, XY^2\}])  =\,& 2\mathrm{mult}(s_1^1,s_3^3)-(3\beta_1 +\gamma)\mathrm{mult}(s_1^1,s_3^2)\\
        &+ 2(2\beta_2+\beta_1^2)\mathrm{mult}(s_1^1,s_3^1)-6\beta_2\beta_1\mathrm{mult}(s_1^1,s_3^0)=0\\
    \end{align*}
    \underline{Pushforward along $p_{4}$}: This is just the fundamental classes of $\{X^4, Y^4\}$, $\{X^3Y, XY^3\}$ and $\{X^2Y^2\}$, which we calculated in Lemma \ref{lem: finite subsets}, and are easily seen to be $0$.
\end{proof}

\begin{lemma}
    Modulo the relations \eqref{eqn: relations}, we have 
    $$
    P(h)=0.
    $$
\end{lemma}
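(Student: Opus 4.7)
The plan is to factor $P(h)$ into linear factors using the splitting principle and then exploit the strong cancellations given by the relations \eqref{eqn: relations}. Let $r_1,r_2$ denote Chern roots of $V_G^\vee$, so that $r_1+r_2 = -\beta_1$ and $r_1 r_2 = \beta_2$. Then the Chern roots of $\Sym^4(V_G^\vee)$ are $(4-i)r_1 + ir_2$ for $i=0,\dots,4$, so
\[
P(h) \;=\; \prod_{i=0}^{4}\bigl(h+(4-i)r_1+ir_2\bigr).
\]
This expression is symmetric in $r_1,r_2$, so it descends to a polynomial in $\beta_1,\beta_2$ as expected.

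The first observation is that the middle factor ($i=2$) equals $h + 2(r_1+r_2) = h - 2\beta_1$, which is simply $h$ modulo the relation $2\beta_1 = 0$. This pulls an overall $h$ out of the product, giving $P(h) \equiv h \cdot Q(h)$, where
\[
Q(h) := (h+4r_1)(h+3r_1+r_2)(h+r_1+3r_2)(h+4r_2).
\]
The second observation is the one already recorded just before the statement of the lemma: $h^2 \equiv 0$ modulo \eqref{eqn: relations}, obtained by expanding $(\beta_1+\gamma)^2$ and using $\beta_1^2 = \beta_1\gamma = \gamma^2$ together with $2\beta_1 = 2\gamma = 0$. Consequently $P(h) \equiv h \cdot Q(0)$, and it remains only to show $Q(0) \equiv 0$.

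A short symmetric-function computation gives
\[
Q(0) \;=\; 4r_1\cdot 4r_2 \cdot (3r_1+r_2)(r_1+3r_2) \;=\; 16\beta_2\bigl(3\beta_1^2 + 4\beta_2\bigr),
\]
using $r_1^2 + r_2^2 = \beta_1^2 - 2\beta_2$. Since $8\beta_2 = 0$ is among our relations, $16\beta_2 \equiv 0$, and every monomial of $Q(0)$ is divisible by $16\beta_2$. Hence $Q(0) \equiv 0$ and therefore $P(h) \equiv 0$.

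I do not foresee a substantive obstacle: the argument is essentially a chain of cancellations. The only care required is to make the two simplifications of $h$ (extracting the middle factor and using $h^2 \equiv 0$) \emph{before} expanding, since expanding $P(h)$ naively into a degree-$5$ polynomial in $h$ with coefficients in $\mathbb Z[\beta_1,\beta_2,\gamma]$ would obscure the cancellations. Both simplifications are natural given the structure of the relations in \eqref{eqn: relations}.
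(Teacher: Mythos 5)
Your proof is correct and rests on the same three ingredients as the paper's: the relation $h^{2}\equiv 0$, the presence of the factor $h+2(r_1+r_2)\equiv h$ coming from the middle Chern root, and divisibility of the remaining symmetric expression by $2$ (the paper phrases this as $2\beta_1\mid c_5$ and $2\mid c_4$ so that $2h=0$ kills $hc_4$, while you phrase it as $16\beta_2\mid Q(0)$). Keeping $P(h)$ in factored form rather than expanding into $h^5+c_1h^4+\cdots+c_5$ is a tidy repackaging of the same argument, so there is nothing to change.
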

\begin{proof}
    If $r_1, r_2$ are the Chern roots of $V_G$  ($r_1 +r_2=\beta_1$, $r_1r_2 =\beta_2$) then
    $$
    c_5(\Sym^4 (V_G^\vee)) = -\prod_{i=0}^4 (ir_1 +(4-i)r_2)\text{ and } c_4(\Sym^4 (V_G^\vee)) = \sum_{j=0}^4 \prod_{i \neq j}(ir_1 +(4-i)r_2).
    $$
    From this it follows that $2\beta_1 \mid c_5(\Sym^4 (V_G^\vee))$ and $2h \mid h c_4(\Sym^4(V_G^\vee))$. Also, $h^2=0$, so $P(h)=0$.
\end{proof}

\section{Interpretation of the generators}\label{sec: tautological classes}

In this section we will give a geometric interpretation of the generators $\beta_1,\beta_2$ and $\gamma$.

We start with identifying the $\beta$'s with the Chern classes of the Hodge bundle. This is a natural vector bundle $\mathbb{E}$ of rank 2 on $\R_2$: it is pulled back from $\mathcal{M}_2$ and if $\pi: C \to S$ is a family of curves of genus $2$ corresponding to a morphism $S \to \mathcal{M}_2$, and $\omega_\pi$ is the relative dualizing sheaf of $\pi$, then the pullback of $\mathbb{E}$ to $S$ is $\pi_*(\omega_\pi)$. The Chern classes $\lambda_i=c_i(\mathbb{E})$ are among the tautological classes introduced by Mumford.

\begin{proposition}\label{prop: Hodge bundle}
    The pullback of $V_G$ to $\mathcal{X}$ yields the dual of the Hodge bundle, i.e. 
    $$
    \alpha^* \mathbb{E}=V_G^\vee
    $$
    on $\mathcal{X}$.
\end{proposition}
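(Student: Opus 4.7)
The plan is to compute $\pi_*\omega_{C/S}$ for the universal family $C\to\P_S\to S$ built in \eqref{eqn: def C} and recognize the answer as $V^\vee$ with its tautological $G$-action, which then descends to $V_G^\vee$ on $\mathcal{X}$.

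The key input is the canonical isomorphism $\omega_{C/S}\cong q^*\O_{\P_S}(1)$ already recorded in Remark \ref{rmk: iso O(1)=omega}. Its $G$-equivariance is automatic from the equivariance of the construction: the twist by $\mathrm{det}(V)\otimes\Gamma$ in Definition \ref{notation: representations} is chosen precisely so that $XY$ is a $G$-invariant section of $\Sym^2(V^\vee)\otimes\mathrm{det}(V)\otimes\Gamma$, which is what makes the algebra structure on $\O_{\P_S}\oplus\O_{\P_S}(-3)$ defining $C$ equivariant, for a canonical equivariant structure on $\O_{\P_S}(-3)$.

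Granted this, the projection formula gives
\[
\pi_*\omega_{C/S}=\pi_*\bigl(\O_{\P_S}(1)\otimes q_*\O_C\bigr)=\pi_*\O_{\P_S}(1)\oplus\pi_*\O_{\P_S}(-2),
\]
where the second summand vanishes since $\O(-2)$ has no sections on $\P$. Identifying $\P_S=\Pp V\times S$ with its natural $G$-action, the standard identification $\pi_*\O_{\Pp V}(1)=V^\vee$ of $G$-representations concludes the calculation. Descending from the atlas $U:=\Sym^4(V^\vee)\otimes\mathrm{det}(V)\otimes\Gamma\smallsetminus\Delta$ to $[U/G]=\mathcal{X}$ then yields $\alpha^*\mathbb{E}=V_G^\vee$.

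The main subtlety I expect is the equivariant bookkeeping, i.e.\ matching up the $G$-equivariant structures on $\O(\pm 3)$, on $\omega_{\P_S/S}\cong\O(-2)$, and on $V^\vee$ so that each isomorphism in the chain is really $G$-equivariant; modulo this, the identification is forced by the projection-formula computation above.
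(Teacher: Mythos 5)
Your argument is correct, but it takes a genuinely different route from the paper's. The paper never computes $\pi_*\omega_{C/S}$ directly: it fits $\alpha$ into a commutative square over Vistoli's presentation $[X/\mathrm{GL}_2]\to\mathcal{M}_2$, quotes \cite[Proposition 3.1]{Vistoli} for the identification of the Hodge bundle there with the standard representation, and observes that the comparison map $\mathcal{X}\to[X/\mathrm{GL}_2]$ is induced on groups by $A\mapsto (A^{-1})^t$, which turns $V$ into $V^\vee$. Your projection-formula computation is more self-contained (in particular it does not inherit the dualization slip in \cite{Vistoli} that the paper has to correct in Remark \ref{rmk: mistake in Vistoli}), at the price of having to handle the linearizations by hand. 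On that point, be aware that the ``equivariant bookkeeping'' you defer at the end is not a side issue but the entire content of the proposition: non-equivariantly both $V$ and $V^\vee$ are trivial rank-$2$ bundles on the atlas $U$, so the statement only says something about the $G$-linearizations. The check does go through, and here is how: the $G$-action on $(\P,\O_{\P}(-3))$ fixed in Proposition \ref{prop: identification of G} sends $g$ to the class of $g\cdot\det(g)^{-1/3}$, so $g$ acts on the fibres of $\O_{\P}(-3)=\O_{\P}(-1)^{\otimes 3}$ by $g^{\otimes 3}\det(g)^{-1}$, i.e.\ $H^0(\P,\O_{\P}(3))\cong \Sym^3(V^\vee)\otimes\det(V)$ as $G$-representations. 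Combining the cyclic-cover formula $\omega_{C/S}\cong q^*\bigl(\omega_{\P_S/S}\otimes\O_{\P_S}(3)\bigr)$ for the family \eqref{eqn: def C} with the ($\mathrm{GL}_2$-equivariant) Euler-sequence isomorphism $\omega_{\P}\cong\O_{\P}(-2)\otimes\det(V^\vee)$, the two determinant twists cancel, so the isomorphism $\omega_{C/S}\cong q^*\O_{\P_S}(1)$ of Remark \ref{rmk: iso O(1)=omega} is $G$-equivariant precisely for the \emph{standard} linearization of $\O_{\P_S}(1)$ — which is exactly what your final step $\pi_*\O_{\P_S}(1)=V^\vee$ requires. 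With that verification spelled out, your chain $\pi_*\omega_{C/S}=\pi_*\bigl(\O_{\P_S}(1)\oplus\O_{\P_S}(-2)\bigr)=V^\vee$ is a complete proof.
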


\begin{remark}\label{rmk: mistake in Vistoli}
    In the proof of the above proposition we will refer to \cite[Proposition 3.1]{Vistoli}. However, with the notation of that paper, there is an error in the description of the $\mathrm{GL}_2$-action on the space $X$. It is incorrectly stated that this action, induced by the isomorphism of $X$ with the $\mathrm{GL}_2$-equivariant variety $Y$, identifies $X$ as an open subset of $\Sym^6(V^\vee) \otimes \det(V)^{\otimes 2}$. Instead, $X$ is identified with the open subset of $\Sym^6(V) \otimes \det(V^\vee)^{\otimes 2}$ consisting of polynomials with no repeated roots.
    
    Base changing over the morphism 
    $$
    B\mathrm{GL}_2 \to B\mathrm{GL}_2
    $$
    given by $A \mapsto (A^{-1})^t$, we get an isomorphism
    \begin{equation}\label{eqn: new vistoli iso}
    \mathcal M_2 \cong \left[X'/\operatorname{GL}_2\right],
    \end{equation}
    where $X' \subset \Sym^6(V^\vee) \otimes \det (V)^{\otimes 2}$ is the locus with no repeated roots, and under this new isomorphism, the Hodge bundle is the pullback of $V_{\operatorname{GL}_2}^\vee$ from $B\operatorname{GL}_2$.
\end{remark}

\begin{proof}[Proof of Proposition \ref{prop: Hodge bundle}]
The morphism
\begin{alignat*}{2}
    \chi : \operatorname{Sym}^4(V^\vee) \otimes \det(V) \otimes \Gamma & \longrightarrow && \, \Sym^6(V^\vee) \otimes \det (V)^{\otimes 2}\\
    F & \mapsto &&XYF
\end{alignat*}
is equivariant with respect to the inclusion $\iota : G \to \operatorname{GL}_2$, and so we have a commutative diagram
$$
\begin{tikzcd}
    \mathcal X \arrow[d, "\alpha"] \arrow[r, "\tilde{\chi}"] & \left[X'/\operatorname{GL}_2\right] \arrow[d, "\eqref{eqn: new vistoli iso}"]\\
    \mathcal R_2 \arrow[r] & \mathcal M_2,
\end{tikzcd}
$$
where the bottom arrow is the forgetful morphism. Therefore, the proposition follows because the pullback of the Hodge bundle under \eqref{eqn: new vistoli iso} is $V_{\operatorname{GL}_2}^\vee$, and $\iota^* V_{\operatorname{GL_2}} = V_G$.
\end{proof}

We obtain the following immediate corollary:

\begin{corollary}
    We have 
    $$
    \beta_i=(-1)^i \lambda_i
    $$
    for $i=1,2$.
\end{corollary}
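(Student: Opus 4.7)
The plan is to deduce the corollary directly from the preceding proposition together with the standard relation between Chern classes of a vector bundle and its dual. Since Theorem \ref{thm: presentation R2} identifies $\mathcal{X}$ with $\R_2$ via $\alpha$, pullback along $\alpha$ induces an isomorphism of Chow rings, so it suffices to compare the classes after pulling back to $\mathcal{X}$.

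First I would apply the proposition to write $\alpha^*\mathbb{E} = V_G^\vee$, and therefore
$$
\alpha^*\lambda_i = \alpha^* c_i(\mathbb{E}) = c_i(V_G^\vee).
$$
Next, I would use the general identity $c_i(F^\vee) = (-1)^i c_i(F)$, valid for any vector bundle $F$, to obtain
$$
c_i(V_G^\vee) = (-1)^i c_i(V_G) = (-1)^i \beta_i
$$
for $i=1,2$. Combining these gives $\alpha^*\lambda_i = (-1)^i \beta_i$.

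Finally, since $\alpha$ is an isomorphism of stacks (Theorem \ref{thm: presentation R2}), $\alpha^*$ is an isomorphism on Chow rings, and under this identification $\lambda_i$ corresponds to $(-1)^i \beta_i$, yielding $\beta_i = (-1)^i \lambda_i$ as classes on $\R_2$. There is no real obstacle here: the corollary is essentially a one-line consequence of the proposition, modulo the elementary Chern-class-of-dual formula.
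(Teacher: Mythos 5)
Your argument is correct and is exactly the intended one: the paper states this as an "immediate corollary" of the proposition $\alpha^*\mathbb{E}=V_G^\vee$, leaving implicit precisely the computation $\alpha^*\lambda_i=c_i(V_G^\vee)=(-1)^ic_i(V_G)=(-1)^i\beta_i$ that you spell out, together with the identification of Chow rings via the isomorphism $\alpha$. Nothing further is needed.
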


Finally we identify the class $\gamma \in \CH^*(\R_2)$. There is a natural double cover of $\R_2$ which we now describe. Denote by $U$ the affine scheme $\Sym^4(V^\vee) \otimes \mathrm{det}(V) \otimes \Gamma \smallsetminus \Delta$. Over $U$ we have the following commutative diagram of $G$ equivariant maps

\begin{equation*}
    \adjustbox{scale=0.95,center}{
        \begin{tikzcd}
         D=\sigma_0(U) \sqcup \sigma_{\infty}(U) \arrow[hookrightarrow]{r} \arrow{dr}{\delta} & C =\underline{\mathrm{Spec}}_{\P_U}( \mathcal{O}_{\P_U} \oplus \mathcal{O}_{\P_U}(-3))  \arrow{r}{q} \arrow{d}{\pi}  & \P_U =U \times \mathbb{P}(V)  \arrow{dl}{\rho}  \\
        & U  & &
        \end{tikzcd}
    }
    \end{equation*}
The action of $G$ on $C$ is given by Proposition \ref{prop: identification of G} and we see that $D$ is $G$-equivariant. Then $\delta$ descends to 
$$
\bigg[ \frac{D}{G} \bigg] \to \R_2,
$$
and we have the following lemma.

\begin{lemma}\label{lemma: interpretation gamma}
    We have 
    $c_1^G(\delta_* \mathcal{O}_D)= \gamma$.
\end{lemma}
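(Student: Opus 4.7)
The plan is to make the structure of $\delta: D \to U$ as an equivariant étale double cover completely explicit, identify $\delta_* \mathcal O_D$ with a concrete $G$-representation, and then decompose it into irreducibles.

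First I would observe that, since $D = \sigma_0(U) \sqcup \sigma_\infty(U)$ is a disjoint union of two copies of $U$ (each embedded in $C$ as a section of $\pi$), the morphism $\delta$ is a trivial étale double cover and, as an $\mathcal O_U$-module,
$$
\delta_* \mathcal O_D \cong \mathcal O_U \oplus \mathcal O_U,
$$
where the two summands correspond to the two sections. The content of the lemma is therefore entirely in the $G$-equivariant structure.

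Next I would unpack that $G$-equivariant structure using Notation \ref{Notation} and Proposition \ref{prop: identification of G}. An element $(a,b;0) \in G$ acts on $\mathbb{P}^1$ as the diagonal matrix $\mathrm{diag}(a,b)$, so it preserves the two points $0,\infty$ individually, hence it preserves each of $\sigma_0(U)$ and $\sigma_\infty(U)$; on each summand it acts simply by the given action of $(a,b;0)$ on $U$ with trivial fiberwise character. An element $(a,b;1)$ acts on $\mathbb{P}^1$ by the antidiagonal matrix, which swaps $0$ and $\infty$, and therefore swaps the two summands of $\delta_*\mathcal O_D$ (combined with the corresponding action on $U$). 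In short, the $G$-action on $\delta_* \mathcal O_D$ is the pullback, via the quotient $G \twoheadrightarrow \mathbb Z/2\mathbb Z$, of the regular representation of $\mathbb Z/2\mathbb Z$.

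Since the characteristic of $k$ is different from $2$, the idempotents $\tfrac{1}{2}(1,1)$ and $\tfrac{1}{2}(1,-1)$ split this regular representation as $\mathbf{1} \oplus \Gamma$, where $\mathbf{1}$ is the trivial and $\Gamma$ the sign representation of $\mathbb Z/2\mathbb Z$ (viewed as a $G$-representation via the quotient). Therefore we obtain a $G$-equivariant isomorphism
$$
\delta_* \mathcal O_D \;\cong\; \mathcal O_U \oplus \Gamma_U,
$$
and taking $c_1^G$ gives
$$
c_1^G(\delta_* \mathcal O_D) \;=\; c_1^G(\mathcal O_U) + c_1^G(\Gamma_U) \;=\; 0 + \gamma \;=\; \gamma,
$$
which is the claim. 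The only step requiring any care is the identification of the action of the antidiagonal part of $G$ on $D$; everything else is just the character theory of $\mathbb Z/2\mathbb Z$.
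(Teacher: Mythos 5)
Your proof is correct and follows essentially the same route as the paper: both identify $\delta_*\mathcal O_D$ $G$-equivariantly with $\mathcal O_U$ tensored with the two-dimensional permutation representation of $G$ pulled back from $\mathbb Z/2\mathbb Z$, and then read off the first Chern class as $\gamma$. The only cosmetic difference is that the paper computes $c_1$ directly as the determinant of that rank-two representation (which is $\Gamma$, with no need for the characteristic hypothesis), whereas you first split it as $\mathbf{1}\oplus\Gamma$ using $\operatorname{char}(k)\neq 2$.
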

\begin{proof}
    The pushforward of the structure sheaf $\mathcal{O}_D$ under $\delta$ is clearly $\mathcal{O}_U \otimes A$ where $A$ is the two dimensional representation of $G$ arising from the representation of $\mathbb{Z}/(2)$ where $-1$ switches the two entries of the vector. Clearly, $\mathrm{det}(A)=\Gamma$ and the lemma follows.
\end{proof}

$\,$\
\noindent

$\,$\
\noindent
\textsc{Department of Pure Mathematics {\it \&} Mathematical Statistics, 
University of Cambridge, Cambridge, UK}

\textit{e-mail address:} \href{mailto:ac2758@cam.ac.uk}{ac2758@cam.ac.uk}

$\,$\\
\textsc{Department of Mathematics, ETH Zürich, Rämistrasse 101, 8092 Zürich, Switzerland}

\textit{e-mail address:} \href{mailto:aitor.iribarlopez@math.ethz.ch}{aitor.iribarlopez@math.ethz.ch}

\end{document}